\newtheorem{theorem}{Theorem}[section]
\newtheorem{lemma}[theorem]{Lemma}
\newtheorem{proposition}[theorem]{Proposition}
\newtheorem{corollary}[theorem]{Corollary}
\theoremstyle{definition}
\newtheorem{definition}[theorem]{Definition}
\newtheorem{question}[theorem]{Question}
\newtheorem{conjecture}[theorem]{Conjecture}
\newtheorem{example}[theorem]{Example}
\newtheorem{remark}[theorem]{Remark}
\newtheorem*{comment*}{Comment}
\newcommand{\R}{\mathbb{R}}
\newcommand{\Z}{\mathbb{Z}}
\newcommand{\N}{\mathbb{N}}
\newcommand{\T}{\mathbb{T}}
\newcommand{\CA}{\mathcal{A}}
\newcommand{\CL}{\mathcal{L}}
\newcommand{\CS}{\mathcal{S}}
\newcommand{\aut}{\mathrm{Aut}}
\newcommand{\id}{\mathrm{Id}}
\newcommand{\tr}{\mathrm{tr}}
\newcommand{\diam}{\mathrm{diam}}
\newcommand{\interior}[1]{%
  {\kern0pt#1}^{\mathrm{o}}%
}
\begin{document}
\title{Chaotic almost minimal actions}
\author{Van Cyr}
\author{Bryna Kra}
\author{Scott Schmieding}
\thanks{BK was partially supported by the National Science Foundation grant DMS-2348315 and SS by the National Science Foundation grant DMS-2247553.}
\maketitle
\begin{abstract}
Motivated by Furstenberg's Theorem on sets in the circle invariant under multiplication by a non-lacunary semigroup, we define a general class of dynamical systems possessing similar topological dynamical properties.
We call such systems  chaotic almost minimal, reflecting that these systems are chaotic, but in some sense are close to minimal. We study properties of the acting group needed to admit such an action, and show the existence of a chaotic almost minimal $\mathbb{Z}$-action. We show there exists chaotic almost minimal $\mathbb{Z}^{d}$-actions which support multiple distinct nonatomic ergodic probability measures.
\end{abstract}

\section{Furstenberg's Theorem}
As celebrated result of Furstenberg~\cite{furstenberg-disjointness} shows that the action of a pair of commuting maps $x\mapsto px\mod 1$ and $x\mapsto qx\mod 1$ on  the circle $\T = \R/\Z$, where $p, q\geq 2$ are multiplicatively independent integers, is highly constrained.  Namely, any closed set that is invariant under multiplication by such $p$ and by $q$
is either finite or is all of $\T$.
Here, we explore what dynamical behavior is behind this type of dichotomy.

We focus on two properties: that there is a dense set of points with finite orbit, and that finite orbits are the the only proper closed invariant sets.
This leads us to define a faithful and transitive  action of a group
(or semigroup)
on a compact metric space $X$  to be {\em chaotic almost minimal} if  there is a dense set of points with finite orbit
and every proper, closed invariant subset of $X$ is finite (see Definition~\ref{def:CAM} for the precise conditions).
The motivation for the name is that a transitive system with dense periodic points is chaotic, for example in the sense of~\cite{devaney} (see also~\cite{ BBCDS, GW}), and the condition on the proper, closed invariant subsets is termed almost minimal in the context of algebraic actions by Schmidt~\cite{schmidt}.

Extending the definition, we call a group (or semigroup) {\em
chaotic almost minimal} if it admits a chaotic almost minimal action on some compact metric space.
In this terminology, Furstenberg's Theorem shows that $\N^2$
is chaotic almost minimal, and the invertible symbolic cover of this system, such as that used by Rudolph~\cite{Rudolph}, shows that $\Z^2$ is chaotic almost minimal.
Similarly one can show that $\N^d$ and $\Z^d$ are chaotic almost minimal for any $d\geq 2$.  While we focus on discrete groups,
we can extend these notions to continuous actions,
defining an action of a locally compact group $G$ to be {\em chaotic almost minimal} if there is a dense set of points whose stabilizer is a lattice in $G$ and every proper compact $G$-invariant subset is the quotient of $G$ by a lattice.
In this setting,
there are also well-known examples of continuous actions which are chaotic almost minimal.
For instance,
Ratner's Theorem~\cite{ratner} implies the action of the unipotent flow on the space $\textrm{SL}_{2}(\mathbb{R})/\textrm{SL}_{2}(\mathbb{Z})$ is chaotic almost minimal. By taking the one-point compactification,
we obtain a chaotic almost minimal $\mathbb{R}$-action on a compact space.  We note however that for any time $t > 0$, the time $t$-map of the flow $\varphi_{t}$ on $SL_{2}(\mathbb{R})/SL_{2}(\mathbb{Z})$ is \emph{not} chaotic almost minimal according to our definition: since there are periodic trajectories of arbitrary length, for any time $t > 0$, there are invariant circles on which the time $t$ map acts by irrational rotation.

Several other examples of chaotic minimal groups have been well studied, including ${\rm SL}_d(\Z)$ for $d\geq 2$ and the automorphism group of a shift of finite type, and we discuss  some of these in Section~\ref{sec:examples}.
All of these examples, and the  variety of other groups that we show have chaotic almost minimal actions (such as those following from the results in Section~\ref{sec:residually-finite}) are all either higher rank or are complicated in some other way.
It is therefore natural to ask about the existence of a $\mathbb{Z}$-action that is chaotic almost minimal. At the same time, with an eye toward the well-known question of Furstenberg on measures invariant under multiplication by two multiplicatively independent integers, it is also natural to wonder whether being chaotic almost minimal is compatible with having multiple nonatomic, ergodic probability measures.

We show that not only do chaotic almost minimal $\mathbb{Z}$-systems exist, but that such systems are capable of supporting more than one nonatomic, ergodic probability measure.  Specifically, we  construct a symbolic chaotic almost minimal $\mathbb{Z}$-system with (at least) two distinct, nonatomic, ergodic probability measures.

\begin{theorem}
\label{th:Z-CAM}
There exists a topological $\mathbb{Z}$-system that is chaotic almost minimal. Furthermore, this system supports two distinct nonatomic ergodic probability measures.
\end{theorem}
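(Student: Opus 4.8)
The plan is to realize the system as a subshift $X \subseteq \CA^{\Z}$ over a finite alphabet, taken as the orbit closure $X = \overline{\CO(x)}$ of a single bi-infinite sequence $x$ built by an explicit hierarchical scheme. Fix a rapidly growing sequence of lengths $\ell_1 \mid \ell_2 \mid \cdots$ and build, by induction on $k$, a finite family $\CW_k$ of words of length $\ell_k$: the members of $\CW_{k+1}$ are prescribed concatenations of $\ell_{k+1}/\ell_k$ words from $\CW_k$, of two sorts. The first sort are the ``constant'' blocks $w\cdots w$ ($w \in \CW_k$), included precisely so that the periodic points $w^{\infty}$ survive to every higher level and hence lie in $X$. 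The second sort are ``rich'' blocks: concatenations in which every ordered pair from $\CW_k \times \CW_k$ occurs among the consecutive children, so that a level-$(k+1)$ rich block already contains, as subwords, every element of $\CL(X)$ of length at most $\ell_k$. The generator $x$ is then assembled as a bi-infinite concatenation of rich blocks of strictly increasing levels, subject to the crucial constraint that any transition between two long ``constant'' stretches must pass through a rich block, whose level, hence size, tends to infinity as one moves outward along $x$.

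To obtain two ergodic measures I would run this scheme with two colours: at each level keep two distinguished rich blocks $A_k, B_k$, with $A_{k+1}$ built from copies of $A_k$ in proportion $1-\varepsilon_k$ (the remaining $\varepsilon_k$-fraction supplying richness and the other colour) and symmetrically for $B_{k+1}$, where $\varepsilon_k \to 0$ and $\sum_k \varepsilon_k < \infty$; this is the mechanism behind the Hahn--Katznelson and Oxtoby constructions of non-uniquely ergodic minimal systems. One arranges $x$ to contain arbitrarily long stretches of $A$-blocks and arbitrarily long stretches of $B$-blocks. Summability of the $\varepsilon_k$ guarantees that the uniform measures on windows of $A_k^{\infty}$ converge to a shift-invariant measure $\mu_A$, and likewise to $\mu_B$; a standard argument gives ergodicity, and keeping $A_1 \ne B_1$ with the two colours separated at every level gives $\mu_A \ne \mu_B$. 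Both measures are nonatomic because the $A$- and $B$-hierarchies are built to be aperiodic at every scale.

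Granting the construction, I would verify the conditions of Definition~\ref{def:CAM} as follows. Faithfulness is automatic since $X$ is infinite, and $x$ has dense orbit by design, so the action is transitive. For dense periodicity, any word $v \in \CL(X) = \CL(x)$ has some length at most $\ell_k$, hence sits inside two consecutive level-$k$ children of $x$, hence inside some rich block $R \in \CW_{k+1}$; then $R^{\infty}$ lies in $X$ (the constant block $R\cdots R$ belongs to $\CW_{k+2}$, and so on up the hierarchy) and contains $v$, so periodic points are dense. The heart of the matter is almost-minimality: one must show every proper closed invariant subset of $X$ is finite, equivalently that every non-periodic $y \in X$ is transitive. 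The argument is that a point of $X = \overline{\CO(x)}$ inherits, at each level $k$, a decomposition into level-$k$ children that appear adjacent somewhere in $x$; if this decomposition is a bi-infinite repetition of a single block at some level, then $y$ is periodic, and otherwise $y$ exhibits ``transitions'' at arbitrarily high levels, which by the design constraint on $x$ forces $y$ to contain rich blocks of arbitrarily large level, hence every word of $\CL(X)$, so $y$ is transitive. Since a nonatomic ergodic measure has closed invariant infinite support, almost-minimality then forces $\mathrm{supp}(\mu_A) = \mathrm{supp}(\mu_B) = X$, completing the proof.

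The main obstacle is the simultaneous reconciliation of the three demands, which pull against one another: almost-minimality wants $X$ rigidly ``one piece'' (every non-periodic orbit dense), dense periodicity wants $X$ to contain many independent finite subsystems, and the presence of two ergodic measures wants two genuinely different asymptotic statistics coexisting in the same space. The free $\mathcal{S}$-adic system generated by the $\CW_k$ would satisfy the first two na\"ively but fails almost-minimality badly, since it contains an infinite proper subsystem built from freely concatenated constant blocks; the resolution is to pass instead to the orbit closure of a single, carefully choreographed generator $x$ in which long ``periodic'' stretches are always separated by rich blocks of growing size. Getting this choreography to coexist with the colour bookkeeping that yields $\mu_A \ne \mu_B$ is the delicate part of the argument, and it is where most of the work will go.
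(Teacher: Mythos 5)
Your overall architecture does match the paper's (two types of words at each level, the orbit closure of a single hierarchically built generator, reduction to ``every aperiodic point is transitive'' and an $a$/$b$ two-colour scheme for the measures), but the specific design of your ``rich'' blocks breaks almost minimality. You require that every ordered pair from $\CW_k\times\CW_k$ occur among the consecutive children of a rich block, and your density argument leans on exactly this. But then two \emph{distinct} constant blocks sit adjacent inside a rich block, and this adjacency propagates up the hierarchy: the constant block of a constant block again lies in the next $\CW$, so a level-$m$ rich block contains $u^{N_m}v^{N_m}$ for distinct $u,v\in\CW_{k-1}$ with $N_m\to\infty$. Since $x$ is a concatenation of rich blocks of ever higher level, $u^{N}v^{N}\in\CL(X)$ for every $N$, and hence the hybrid point ${}^{\infty}u.v^{\infty}$ lies in $X=\overline{\CO(x)}$. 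Choosing $u\neq v$ (say with different letter frequencies) this point is aperiodic, and its orbit closure --- its shifts together with the two periodic points $u^{\infty}$ and $v^{\infty}$ --- is a countable, hence proper, infinite closed invariant subset of $X$. So condition (3) of Definition~\ref{def:CAM} fails, and indeed even the weak form fails. Your ``choreography'' constraint only governs how $x$ is assembled from rich blocks; the offending transitions live \emph{inside} the rich blocks themselves, so the constraint does not save the construction, while dropping the all-pairs requirement destroys the step ``a rich block contains all of $\CL(X)$ up to length $\ell_k$'' on which your transitivity argument rests.

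The paper resolves this tension differently: the density word $a_{k+1}$ contains each level-$k$ word only once, always buffered by long runs of $a_k$, so the only adjacencies of distinct level-$k$ words ever created involve $a_k$ or $b_k$ (Proposition~\ref{prop:structure}); consequently a point in which $a_ka_k$ never occurs parses into powers of periodic-type words with all adjacent blocks equal, hence is periodic. Density of aperiodic orbits is then obtained not from a single block containing the whole language, but from the fact that $a_ka_k$ is precisely the central window of the generator $x$, so containing $a_ka_k$ for every $k$ forces the orbit to approximate $x$. Making the parsing argument rigorous requires the recognizability-type statements (no level-$k$ word occurs as a subword of $vv$ for a different level-$k$ word $v$), which your sketch omits entirely and which occupy most of the paper's work. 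Finally, your measure step asserts that $\mu_A$ and $\mu_B$ are ergodic and nonatomic; the paper never proves ergodicity of the limit measures, but instead passes to their ergodic decompositions, uses the frequency bounds of the form \eqref{eq:fraction4} to show that almost every component gives vanishing mass to $[u^c]$ as $c\to\infty$ for each periodic word $u$ (hence is nonatomic), and uses the $[0]$- and $[1]$-masses to extract two distinct nonatomic components. ``Aperiodic at every scale'' is not by itself a proof that the limit measures, or their ergodic components, are nonatomic.
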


Our definition of CAM systems is strongly motivated by Furstenberg's Theorem on $\times p, \times q$ invariant subsets of the torus. Moreover, we are also motivated here by Furstenberg's question regarding the possible nonatomic ergodic measures for $\times p, \times q$ on the circle. A consequence of Theorem~\ref{th:Z-CAM} is that if the answer to Furstenberg's question is positive, meaning there is a unique nonatomic ergodic measure invariant under a non-lacunary semigroup, then the CAM structure alone is not responsible for   this uniqueness. Moreover, in higher ranks the same result holds, and we prove that in fact, there exists $\mathbb{Z}^{d}$-CAM systems possessing two distinct nonatomic ergodic measures.

\begin{theorem}
\label{th:Zd-CAM}
For every $d \ge 1$, there exists a topological $\mathbb{Z}^{d}$-system that is chaotic almost minimal and supports two distinct nonatomic ergodic probability measures.
\end{theorem}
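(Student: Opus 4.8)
The plan is to build, for each $d \ge 2$ (the case $d=1$ being Theorem~\ref{th:Z-CAM}), an explicit subshift $X_d \subseteq A^{\Z^d}$ over a finite alphabet $A$, with $\Z^d$ acting by the coordinate shifts, by carrying out the symbolic construction behind Theorem~\ref{th:Z-CAM} one dimension at a time. It is worth recording at the outset why a product construction cannot work: if $(X,T)$ is any CAM $\Z$-system, then $X$ is infinite and carries a nonempty proper finite $T$-invariant set $F$ (any finite orbit), so for any infinite system $(Y,S)$ the set $F \times Y$ is a proper, closed, $(T \times S)$-invariant, infinite subset of $X \times Y$; hence $X \times Y$ is never CAM. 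As in Furstenberg's theorem, the $d$ commuting generators must therefore act on a common space in an entangled way, and a genuinely $d$-dimensional symbolic construction is one way to supply this.

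Concretely, I would define $X_d$ through a hierarchy of finite $d$-dimensional blocks, passing from one level to the next by the amalgamation and coding rules of the one-dimensional construction applied now to $d$-dimensional arrays, and declare $\eta \in A^{\Z^d}$ to lie in $X_d$ exactly when every finite pattern occurring in $\eta$ occurs inside some block of the hierarchy. The verification then splits into the following steps. First, $X_d$ is closed and shift-invariant, hence a topological $\Z^d$-system, and the action is faithful because the hierarchy forces non-periodic configurations into $X_d$. Second, finite orbits are dense: periodizing a finite portion of the block hierarchy produces configurations with finite-index stabilizer, any such configuration of a subshift has finite orbit, and these configurations are dense. Third, $X_d$ is transitive, witnessed by a configuration realizing the entire block hierarchy around the origin. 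Fourth, the \emph{rigidity} statement: every proper, closed, $\Z^d$-invariant $Y \subsetneq X_d$ is finite. Fifth, just as in one dimension, two distinct asymptotic ``frequency regimes'' in the assembly of the blocks produce two distinct nonatomic $\Z^d$-invariant probability measures, which one arranges to be ergodic; being $\Z^d$-ergodic, distinct and nonatomic then costs nothing more.

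I expect the fourth step to be the crux. In one dimension, rigidity is obtained by showing that a single pattern occurring in a configuration forces the surrounding hierarchy of larger blocks, so that any $\eta$ with infinite orbit closure contains every pattern of $X_d$ and therefore $\overline{\mathrm{Orb}(\eta)} = X_d$; in $\Z^d$ this forcing must be propagated in all $d$ coordinate directions at once, and one must control not merely where each level-$n$ block occurs but how the level-$n$ blocks tile $\Z^d$ around it (alignment of block boundaries, absence of ``fault lines''), so the combinatorial bookkeeping is considerably heavier than on a line. As an alternative route for $d \ge 2$ that reuses the $\Z$-system directly: if the subshift $(X_1,T)$ of Theorem~\ref{th:Z-CAM} can be taken so that $\aut(X_1,T)$ contains a rank-$d$ free abelian subgroup containing $T$ and leaving each of the two ergodic measures invariant, then the resulting $\Z^d$-action on $X_1$ is automatically CAM --- a proper closed invariant set is in particular $T$-invariant, hence finite; a $T$-periodic configuration has finite $\Z^d$-orbit since automorphisms preserve the finite set of configurations of each period; transitivity and the two measures pass through unchanged, the latter remaining $\Z^d$-ergodic because $\Z^d$-invariant sets are $T$-invariant --- so the only remaining task along this route is to verify that $\aut(X_1,T)$ contains such a subgroup, or to build the $\Z$-system so that it does.
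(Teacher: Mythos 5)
Your first route is, in outline, exactly the paper's route: for each $d$ the paper builds a genuinely $d$-dimensional hierarchical subshift over $\{0,1\}$ (level-$k$ words $w_{(i,k)},a_k,b_k$ assembled by ``postcard'' concatenations), takes the orbit closure of the configuration whose central cubes are $a_k^{(2)}$, gets dense finite orbits from periodized blocks, and obtains the two measures as weak* limits of the empirical measures along the $a_k$'s and along the $b_k$'s; your observation that a product of a CAM system with any infinite system is never CAM is precisely why a genuinely $d$-dimensional construction is required. The difficulty is that your proposal stops where the proof starts. The content of the theorem is your fourth step together with the block construction that makes it provable, and neither is supplied: the paper must tune the parameters $n_{k+1}$ so that every lower-level word $u\neq a_m$ occurs in $a_{k+1}^{(2)}$ with frequency below $\frac{1}{|u|(2|u|-1)^d}\sum_{j\geq m}\varepsilon_j$ (and symmetrically for the $b$'s), and then prove inductively (Proposition~\ref{propd:levelkplus1}) that distinct level-$k$ words never occur as subwords of one another's doubled concatenation $v^{(2)}$, via period-index and overlap counting; these estimates are exactly what force a configuration missing some $a_k$-block to be periodic. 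Announcing that ``the combinatorial bookkeeping is considerably heavier'' identifies the crux but does not carry it out, so as it stands the proposal is a roadmap rather than a proof.

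Two further points. First, even granting the forcing statement for points, your step four conflates point rigidity with set rigidity: ``every point is periodic or has dense orbit'' does not by itself exclude an infinite proper closed invariant set consisting entirely of finite orbits. The paper closes this with expansiveness: an infinite expansive subsystem contains an aperiodic point (Lemma~\ref{lemma:expansive}), so weakly CAM plus expansive implies CAM (Proposition~\ref{prop:weak-to-strong}); some such step is needed in your write-up as well. Similarly, the measures are not ``arranged to be ergodic'': the empirical limits $\mu_a,\mu_b$ have no reason to be ergodic, and the paper instead shows that almost every ergodic component of each is nonatomic, that some component of $\mu_a$ gives the cylinder $[1]$ measure at least $2/3$ while some component of $\mu_b$ gives it at most $1/3$, and extracts the two ergodic measures from the ergodic decomposition. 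Second, your alternative route through $\aut(X_1,\sigma)$ is conditional on a hypothesis nobody knows how to check: the paper itself records in Section~\ref{sec:further} that essentially nothing is known about the automorphism group of the constructed $\Z$-CAM subshift beyond the bit flip, so requiring a rank-$d$ free abelian subgroup containing the shift and preserving both measures replaces the theorem by an open problem rather than proving it.
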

%

As in the one dimensional case of Theorem~\ref{th:Z-CAM}, our constructed system is a subshift over the alphabet $\{0,1\}$.

The details of the construction of the CAM $\mathbb{Z}$-system are in Section~\ref{sec:construction}. We show that the resulting system is chaotic almost minimal in Proposition~\ref{prop:Z-CAM}, and in Section~\ref{sec:measures} we prove that this system has more than one nonatomic, ergodic measure. In Section~\ref{sec:higher}, we adapt this construction to a $\mathbb{Z}^{d}$-action, and show the existence of a $d$-dimensional subshift that is chaotic almost minimal and supports multiple distinct nonatomic ergodic probability measures.

A property shared by all of the groups that we show are chaotic almost minimal is that they are all residually finite, and in fact it is not hard to show that any group which admits a chaotic action must be residually finite (see Section~\ref{sec:residually-finite}).  This leaves open the question of whether every residually finite group gives rise to a chaotic almost minimal action, and we conjecture (see Section~\ref{sec:further}) that this holds.

We comment that we define chaotic almost minimal systems in terms of invariant subsets and not in terms of orbits of points.  As a tool to showing the existence of chaotic almost minimal systems, we often proceed by showing a weaker property, which we call {\em weakly chaotic almost minimal}, in which we replace the condition on subsets by the condition that all orbits are either finite or dense.
While every chaotic almost minimal system is weakly chaotic almost minimal, the converse does not hold and a construction showing that the two notions are not the same is given in Example~\ref{ex:weak-not-strong}.  However, in Proposition~\ref{prop:weak-to-strong} we show these notions are the same for expansive systems, which is why we are able to make use of the weakly chaotic almost minimal property in our constructions.  A related question, along with some others and a conjecture, are discussed in Section~\ref{sec:further}.

\subsection*{Acknowledgment} We thank Xiangdong Ye for helpful remarks in the preparation of this article.

\section{CAM systems}
\subsection{General definitions and notation}
For a countable group $G$ and compact metric space $X$, a {\em topological $G$-system $(X,T)$} is an action of $G$ on $X$ by homeomorphisms, meaning a homomorphism $T \colon G \to \textnormal{Homeo}(X)$. Given such a system and $g \in G$ we write $T_{g} \colon X \to X$ for the action by the element $g$ on $X$.
We use the same terminology when $G$ is a semigroup, referring to a topological $G$-system $(X, T)$, but assuming that the associated actions $T_g\colon X\to X$ are continuous maps which are not necessarily invertible. When $G = \mathbb{Z}$ we write simply $(X,T)$ to mean $T$ is a homeomorphism from $X$ to itself.

The system $(Y, S)$ is a {\em (topological) factor} of the system $(X, T)$ if there exists a continuous surjective map $\pi\colon X\to Y$ such that $\pi\circ T = S\circ \pi$.

The $G$-system $(X, T)$ is {\em topologically transitive} if for all open sets $U, V\subset X$, there is some $g\in G$ such that $T_gU\cap V\neq\emptyset$, and is {\em point transitive} if there exists some $x\in X$ such that the orbit $\{gx:g\in G\}$ is dense in $X$.

Some of our examples and constructions involve symbolic systems.  For a finite set $\CA$, called the {\em alphabet}, we consider the compact metric space $\CA^\Z$, writing  $x\in\CA^\Z$ as
$x = (x_n)_{n\in\Z}$, where $\CA^\Z$ is endowed with the metric
$d(x, y) = 2^{-\{\inf|n|: x_n\neq y_n\}}$. The shift $\sigma\colon \CA^\Z \to\CA^\Z$ is defined
by $(\sigma x)_n = x_{n+1}$ for all $n\in\Z$.
A {\em subshift} is a system $(X, \sigma)$, where $X\subset\CA^\Z$ is a compact $\sigma$-invariant subset and $\mathcal{A}$ is some finite alphabet.

For $n\geq 0$, a concatenation $w =w_0\dots w_{n-1}\in\CA^n$ is called a {\em word} $w$ and we say that this word has {\em length} $n$. Given a subshift $(X, \sigma)$ over the alphabet $\mathcal{A}$ and word $w \in \mathcal{A}^{n}$ , the {\em cylinder set} $C_{k}(w)$ is the collection of all $x\in X$ such that $x_k = w_0, \dots x_{k+n-1} = w_n$. By a cylinder set in $(X,\sigma)$ we mean a set $C_{k}(w)$ for some $k \in \mathbb{Z}$ and word $w$ over the respective alphabet. A word $w$ is in the  {\em language} $\CL(X)$ of $(X, \sigma)$ if there is some nonempty cylinder set determined by $w$.  The language $\CL(X) = \bigcup_{n\geq 0}\CL_n(X)$, where  $\CL_n(X)$ denotes all of the words of length $n$ in $\mathcal{L}(X)$.

\subsection{Definition of CAM systems}
We now define our main object of study. All groups and semigroups here are assumed to be discrete.
\begin{definition}
\label{def:CAM}
A topological $G$-system $(X,T)$ is
{\em  chaotic almost minimal (CAM)} if all of the following hold:
\begin{enumerate}
\item
The system $(X,T)$ is topologically transitive and the action of $T$ on $X$ is faithful.
\item
The space $X$ contains a dense set of
points whose $T$-orbit is finite.
\item
Every proper closed $T$-invariant
subset
of $X$ is finite.
\end{enumerate}
\end{definition}
When $(X,T)$ satisfies these conditions, we say that $(X, T)$ is a {\em CAM system}, or shorten this further and say that it is {\em CAM}.  When we want to emphasize the group acting, we call this a $G$-CAM system.

We note that for general groups, we can easily construct examples showing that no two of these conditions implies the third.  There are various variations of these assumptions possible, and one which is useful for our purposes is a weakening of the third condition to be about points, and not invariant sets (see Section~\ref{sec:weaklyCAM}).
We further note that if a system is CAM, then it is topologically transitive, has a dense set of points with finite orbit, and the only compact minimal subsystems are finite. However, the converse is not true in general, as can be seen from Example~\ref{ex:weak-not-strong}.  Related examples are constructed in~\cite{DY}, where they produce an $\N$-system with dense periodic points such that every point either is periodic or has dense orbit.

We also study which groups admit CAM actions, and this is captured in the next definition.
\begin{definition}
The group or semigroup $G$ is {\em (topologically)  CAM} if there exists an infinite compact metric space $X$ such that the system $(X, G)$ is a CAM system.
\end{definition}

\subsection{Classical examples of systems that are CAM}
\label{sec:examples}

Before proceeding further, we give several examples of CAM systems.

\begin{example}[Systems invariant under a non-lacunary (semi-) group]
\label{ex:Furstenberg}
Our motivating example is
Furstenberg's Theorem~\cite{furstenberg-disjointness}, which proves that $\times p,\times q$ for multiplicatively independent integers $p, q \geq 2$ acting on the torus is an $\N^2$-CAM system.  A symbolic $\Z^2$-CAM system is given by Rudolph's  coding~\cite{Rudolph} of of this system. By taking the restriction of this system to $\mathbb{N}^2$, we also obtain an example of a symbolic $\N^2$-CAM system.
\end{example}

Generalizations of this example for actions on higher dimensional tori  that are CAM are given in Berend~\cite{berend}. Another CAM action on higher dimensional tori is given in the next example.

\begin{example}
    [The action of ${\rm SL}_d(\Z)$ on $\T^d$]
\label{example:SLdZ}
We make use of results in the literature to check that the action of ${\rm SL}_d(\Z)$ on $\T^d$ is CAM.
Several papers prove that any infinite set has dense orbit (see for example~\cite{GS, muchnik, Dong1}). It suffices then to check that periodic points are dense, but this is trivial.  Namely, if  $v \in \mathbb{Q}^{d} / \mathbb{Z}^{d}$, write $v$ using a common denominator for all coordinates. Then for any $A \in {\rm SL}_{d}(\mathbb{Z})$ the denominators of the coordinates of $Av$ are again bounded above by the original denominators, and so the  orbit of $v$ must be finite.
We note that any probability measure invariant under this action is a linear combination of Haar measure and atomic measures (this is a simple case of the deep results in~\cite{BQ, BFLM}).
These results cover more general settings, as shown in~\cite{GS, muchnik,BQ,BFLM}.
\end{example}

A different type of example arises by considering the symmetries of symbolic systems.

\begin{example}[The automorphism group of a mixing shift of finite type]
\label{example:autos}
Let $(X, T)$ be an infinite mixing $\mathbb{Z}$-shift of finite type and let $\aut(X,T)$ denote the group of automorphisms of $(X,T)$, meaning the group of self-homeomorphisms of $X$ that commute with $T$. Then the action of $\aut(X,T)$ on $X$ is CAM. Indeed, the density of periodic points is immediate, as the action of $\aut(X,T)$ preserves $T$-periodic points of a given period and $T$-periodic points are dense in $X$.  If $Y$ is an infinite, closed $\aut(X,T)$-invariant set, then in particular it is $T$-invariant, and so $Y$ contains an aperiodic point.
It follows from~\cite[Theorem 9.2]{BLR} that the $\aut(X,T)$-orbit of this aperiodic point is dense in $X$, and so the system is weakly CAM.  Since it is expansive, it follows from Proposition~\ref{prop:weak-to-strong} that the system is CAM.
It is interesting to note that it is also proved in~\cite{BLR} that the only nonatomic measure on $X$ invariant under $\aut(X,T)$ is the measure of maximal entropy for $T$.
\end{example}

\subsection{Basic dynamical properties of CAM systems}

For any group (or semigroup) $G$, the collection of  $G$-CAM systems is closed under factors, orbit equivalence, and flow equivalence, but is not closed under taking products. We verify these and other simple properties of CAM systems.

\begin{proposition}\label{prop:FurstUnctble}
If $(X, T)$ is infinite, has dense periodic points, and is point  transitive,
then $X$ has no isolated points. 
In particular, if $(X,T)$ is CAM then $X$ is perfect and uncountable.   
\end{proposition}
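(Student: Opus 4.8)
The plan is to argue by contradiction: suppose $x_0 \in X$ is isolated. First I would exploit point transitivity and density of periodic points together. Since $(X,T)$ is point transitive, fix a point $z$ whose orbit $\{gz : g \in G\}$ is dense in $X$. Because $\{x_0\}$ is open, density of the orbit of $z$ forces $g_0 z = x_0$ for some $g_0 \in G$; I expect this already shows $x_0$ lies in the orbit closure of any transitive point. Next, because periodic points are dense and $\{x_0\}$ is open, the point $x_0$ itself must be periodic (any point in an open set containing a periodic point... more carefully: density of periodic points means every nonempty open set contains a periodic point, so the singleton $\{x_0\}$ contains one, hence $x_0$ is periodic with some finite orbit $F = \{gx_0 : g \in G\}$).

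The key step is then to observe that $F$ is a finite, closed, $T$-invariant set, and since $X$ is infinite, $F$ is a proper closed invariant subset. I would like to derive a contradiction with transitivity. Here I would use the following: each point of the finite orbit $F$ is isolated (the homeomorphisms $T_g$ carry the isolated point $x_0$ to isolated points, and for a semigroup one argues that $T_g^{-1}(\{x_0\})$-type preimages, or rather the images, remain open since each $T_g$ restricted appropriately is... this needs a little care in the semigroup case). Granting that $F$ consists of isolated points, $F$ is both open and closed in $X$. But then $X \setminus F$ is a nonempty (as $X$ is infinite) open invariant set, and $F$ is a nonempty open invariant set, so no $g \in G$ can move a point of $F$ into $X \setminus F$ or vice versa, contradicting topological transitivity (take $U = F$, $V = X \setminus F$). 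This gives the first assertion, that $X$ has no isolated points.

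For the final sentence: if $(X,T)$ is CAM then it is topologically transitive and point transitive (transitivity plus second countability yields a transitive point, which I would cite or note), has dense periodic points, and is infinite — if it were finite it would violate the CAM requirement that proper closed invariant subsets be finite while $X$ itself is not (or rather, a finite faithful transitive system has $X$ as its only... one should just observe CAM forces $X$ infinite since the dense set of finite-orbit points together with a proper-closed-invariant-finite condition is vacuous/contradictory for finite $X$; I will state this cleanly). Hence the first part applies and $X$ has no isolated points, i.e. $X$ is perfect. A nonempty perfect compact metric space is uncountable by the Baire category theorem, so $X$ is uncountable.

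The main obstacle I anticipate is the semigroup case in the key step: showing that the image of an isolated point under a non-invertible continuous $T_g$ is still isolated, and more generally that the finite orbit $F$ is open. For a group this is immediate since each $T_g$ is a homeomorphism. For a semigroup I would instead argue directly that the orbit closure structure plus transitivity already forbids isolated points, perhaps by noting that a transitive point's orbit must accumulate everywhere, so no point outside that orbit is isolated, and a point $x_0$ in the orbit with $\{x_0\}$ open would force the transitive orbit to be finite, contradicting $X$ infinite. I expect the cleanest route is to handle the contradiction via transitivity against the clopen finite invariant set, reducing the semigroup subtlety to the single claim that finite orbits of isolated points are clopen, which should follow from continuity and finiteness.
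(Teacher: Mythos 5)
Your proof is essentially correct for group actions, but it takes a genuinely different route from the paper's. The paper argues via Baire category: with a countable base $(U_n)$, each set $\bigcup_{m>0}T^{-m}U_n$ is open and dense (this is where transitivity enters), so the set of points with dense forward orbit is residual, hence dense, hence contains the isolated point $x$; since $x$ is also periodic, its finite forward orbit would be dense, contradicting infiniteness of $X$. You instead note that the isolated point is periodic, that its finite orbit $F$ is clopen because homeomorphisms carry isolated points to isolated points, and that $U=F$, $V=X\setminus F$ then violates transitivity. Your argument is more elementary (no countable base or category argument in the main step); the paper's has the advantage of working verbatim for non-invertible actions, since it only ever pulls back open sets. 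One small mismatch present in both write-ups: the stated hypothesis is point transitivity, while the contradiction you draw (and the density of the paper's sets) uses topological transitivity; for group actions this is harmless since point transitivity implies topological transitivity, and in fact for groups you can shortcut everything: $\{x_0\}$ open and $Gz$ dense give $x_0=g_0z$, so $Gx_0=Gz$ is dense yet finite, forcing $X$ to be finite.

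The genuine soft spot is the semigroup case you flag yourself. Images of isolated points under non-invertible $T_g$ need not be isolated, so $F$ need not be open; and your fallback claim that an isolated point on the transitive orbit forces that orbit to be finite is justified for $\mathbb{N}$-actions (everything from time $k$ onward lies in $F$) but not for, say, $\mathbb{N}^2$, where only the orbit points of the form $T_hT_{g_0}z=T_hx_0$ are forced into $F$. The fix is simpler than your fallback: keep $V=X\setminus F$ (open and nonempty since $F$ is finite and $X$ infinite) but take $U=\{x_0\}$, which is open by assumption; forward invariance of $F$ gives $T_gU\subseteq F$ for every $g$, contradicting topological transitivity, which is part of the definition of a CAM system. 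Finally, in the CAM step your reason for $X$ being infinite is off: the conditions of Definition~\ref{def:CAM} are not contradictory for finite $X$; what rules out finite $X$ (for the infinite acting groups of interest) is faithfulness, since an infinite group or semigroup cannot embed in the finite monoid of self-maps of a finite set. Producing a transitive point from topological transitivity is then exactly the Baire argument that the paper's proof runs anyway.
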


\begin{proof}
We proceed by contradiction and assume that $x\in X$ is an isolated point.  Since the periodic points are dense in $X$, it follows that $x$ itself is periodic.  If $(U_n)_{n\in\N}$ is a countable base of open sets for $X$, then for each $n\in\N$, the set $V_n = \bigcup_{m> 0} T^{-m}U_n$ is open and dense.  The intersection $\bigcap_{n\in\N}V_n$ is a residual set and by transitivity, every point in it has  forward dense orbit.   In particular, there is some point with a dense forward orbit and so the periodic point $x$ itself has a dense forward orbit, contradicting the assumption that the system is infinite. Thus $X$ has no isolated points. Since $X$ is compact, it follows that it is perfect and uncountable.
\end{proof}

\begin{proposition}
A system  with dense periodic points that is point transitive  admits a Borel invariant probability measure of full support.
\end{proposition}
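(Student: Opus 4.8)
The plan is to manufacture the measure directly out of the periodic orbits. Since $X$ is a compact metric space it is separable, and hence so is the set $P\subseteq X$ of periodic points with its subspace topology; I fix a countable set $D=\{p_1,p_2,\dots\}\subseteq P$ that is dense in $P$, which by hypothesis is then dense in $X$. For each $n$ let $O_n$ be the (finite) orbit of $p_n$ and let $\mu_n$ be the normalized counting measure on $O_n$. Each $T_g$ is a homeomorphism carrying the finite set $O_n$ into itself, hence permuting it, so $(T_g)_*\mu_n=\mu_n$ for every $g\in G$; thus $\mu_n$ is a $G$-invariant Borel probability measure. I then set $\mu=\sum_{n\ge 1}2^{-n}\mu_n$.

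The remaining work is to verify the three properties of $\mu$, all of which are routine. It is a Borel probability measure because each $\mu_n$ is and $\sum_{n\ge1}2^{-n}=1$. It is $G$-invariant because for any Borel $A\subseteq X$ and any $g\in G$ one has $\mu(T_g^{-1}A)=\sum_n 2^{-n}\mu_n(T_g^{-1}A)=\sum_n 2^{-n}\mu_n(A)=\mu(A)$, the interchange of summation and evaluation being valid for a convergent series of nonnegative terms. Finally $\mu$ has full support: for each $m$ and each open set $U\ni p_m$ we have $\mu(U)\ge 2^{-m}\mu_m(U)\ge 2^{-m}\mu_m(\{p_m\})>0$, so every $p_m$ lies in the closed set $\operatorname{supp}(\mu)$, whence $\operatorname{supp}(\mu)\supseteq\overline{D}=X$.

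I do not expect any genuine obstacle: the statement is soft, and the only ingredients are the separability of subspaces of a metric space together with elementary facts about pushforwards and countable convex combinations of probability measures. The single point that carries any content is that a finite orbit automatically supports an invariant probability measure, namely the uniform one, and this uses that the acting elements are invertible on $X$; for a semigroup acting by non-invertible maps one would instead place the uniform mass on the periodic part of each orbit (or strengthen the notion of periodic point accordingly). I would also note in passing that point transitivity is not actually needed for this conclusion — density of the periodic points alone suffices.
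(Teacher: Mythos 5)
Your proof is correct and is essentially the paper's argument: both take a countable dense family of periodic orbits, put the uniform (invariant) measure on each orbit, and form a convergent convex combination, whose support then contains the closure of the dense set of periodic points. Your added remarks (that disjointness of the orbits is unnecessary, that point transitivity is not used, and that invertibility is what makes the uniform orbit measure invariant) are accurate refinements of the same approach rather than a different route.
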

Again, this applies immediately to any CAM system.
\begin{proof}
Let $(O_{i})_{i\in\N}$ be a collection of disjoint periodic orbits  whose union is dense in the space and let $(m_{i})_{i\in\N}$ be a sequence of positive real numbers such that $\sum_{i\in\N} m_i = 1$.   Setting $\mu = \sum_{i} \frac{m_{i}}{|O_{i}|}\left(\sum_{x \in O_{i}}\delta_{x}\right)$, where $\delta_x$ denotes the Dirac measure at $x$, we produce such a measure.
\end{proof}

\begin{proposition}
\label{prop:measures}
Any nonatomic invariant Borel probability measure on a CAM system has full support.
\end{proposition}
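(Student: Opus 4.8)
The plan is to look at the topological support $\operatorname{supp}(\mu)$ of the given nonatomic invariant probability measure $\mu$ and to play it against condition~(3) of Definition~\ref{def:CAM}. Recall that $\operatorname{supp}(\mu)$ is the complement of the union of all open sets of $\mu$-measure zero; equivalently, it is the set of points every open neighborhood of which has positive $\mu$-measure. By construction it is a closed subset of $X$ of full measure.

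The first real step is to verify that $\operatorname{supp}(\mu)$ is $T$-invariant. Fix $g \in G$ and $x \in \operatorname{supp}(\mu)$, and let $V$ be an open neighborhood of $T_g x$. Since $T_g$ is continuous, $T_g^{-1}(V)$ is an open neighborhood of $x$, so $\mu\bigl(T_g^{-1}(V)\bigr) > 0$; invariance of $\mu$ then gives $\mu(V) = \mu\bigl(T_g^{-1}(V)\bigr) > 0$, whence $T_g x \in \operatorname{supp}(\mu)$. Thus $\operatorname{supp}(\mu)$ is a closed $T$-invariant subset of $X$. I would note that this argument only invokes forward invariance and continuity of the maps $T_g$, so it applies equally in the semigroup setting.

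Now invoke condition~(3): in a CAM system every proper closed $T$-invariant subset is finite, so either $\operatorname{supp}(\mu) = X$ or $\operatorname{supp}(\mu)$ is finite. In the latter case $\mu$ is concentrated on a finite set, and additivity gives $1 = \mu\bigl(\operatorname{supp}(\mu)\bigr) = \sum_{x \in \operatorname{supp}(\mu)} \mu(\{x\})$, so at least one singleton has positive $\mu$-measure, contradicting the hypothesis that $\mu$ is nonatomic. Hence $\operatorname{supp}(\mu) = X$, i.e.\ $\mu$ has full support.

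I do not expect a genuine obstacle here: the proof is essentially a one-line consequence of the fact that supports of invariant measures are invariant closed sets, combined with the almost-minimality condition. The only points deserving a moment's care are checking the invariance of $\operatorname{supp}(\mu)$ using only continuity (so as to cover non-invertible semigroup actions) and recording that a measure supported on a finite set necessarily possesses an atom.
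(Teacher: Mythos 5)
Your proof is correct and follows essentially the same route as the paper: the support of the measure is a closed invariant set, nonatomicity forces it to be infinite, and condition~(3) of the CAM definition then gives full support. You simply fill in the details (invariance of the support via continuity and measure invariance, and the atom in the finite case) that the paper leaves implicit.
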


\begin{proof}
    As the support of the measure is a closed, invariant set, any  nonatomic measure has some infinite closed invariant set.  Invariance implies that the measure has full support.
\end{proof}


\begin{proposition}
For any factor of a CAM system, if the group action on the factor is faithful, then the factor is also a CAM system.  In particular, any infinite factor of a $\Z$-CAM system is $\Z$-CAM.
\end{proposition}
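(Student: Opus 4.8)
The plan is to verify the three conditions of Definition~\ref{def:CAM} directly for a factor $(Y,S)$ of the CAM system $(X,T)$, transporting each condition across the factor map $\pi\colon X\to Y$: pull back open sets and closed invariant sets, and push forward finite orbits. Throughout I would use only that $\pi$ is continuous and surjective and that $\pi\circ T_g=S_g\circ\pi$, which together give $S_g(\pi(A))=\pi(T_g(A))$ for every $A\subseteq X$ and every $g\in G$.

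First I would check topological transitivity. Given nonempty open $U,V\subseteq Y$, the preimages $\pi^{-1}(U)$ and $\pi^{-1}(V)$ are open by continuity and nonempty by surjectivity, so transitivity of $(X,T)$ produces $g\in G$ and a point $x\in\pi^{-1}(U)$ with $T_gx\in\pi^{-1}(V)$; then $\pi(x)\in U$ and $S_g(\pi(x))=\pi(T_gx)\in V$, so $S_g(U)\cap V\neq\emptyset$. (No inverses are used, so this also covers the semigroup case.) Faithfulness of $S$ is assumed. For condition~(2), the set $D$ of points of $X$ with finite $T$-orbit is dense, so $\pi(D)$ is dense in $Y=\pi(X)$; and if $x$ has finite $T$-orbit then $\pi(x)$ has finite $S$-orbit, since the $S$-orbit of $\pi(x)$ is the image under $\pi$ of the $T$-orbit of $x$. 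Thus $Y$ has a dense set of points with finite $S$-orbit. For condition~(3), let $B\subsetneq Y$ be closed and $S$-invariant; then $\pi^{-1}(B)$ is closed and $T$-invariant, and it is a \emph{proper} subset of $X$ precisely because $\pi$ is surjective and $B\neq Y$. Hence $\pi^{-1}(B)$ is finite by condition~(3) for $(X,T)$, and therefore $B=\pi(\pi^{-1}(B))$ is finite.

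For the ``in particular'' statement, let $(Y,S)$ be an infinite factor of a $\Z$-CAM system; by the first part it is enough to show that the $\Z$-action on $Y$ is faithful. Its kernel is $n\Z$ for some $n\geq 0$, and if $n\geq 1$ then $S_n=\Id_Y$, so every $S$-orbit in $Y$ has at most $n$ points. But $(Y,S)$ is a factor of a topologically transitive system, hence topologically transitive, and since $Y$ is a compact metric space it is therefore point transitive (the Baire category argument as in the proof of Proposition~\ref{prop:FurstUnctble}). A point with dense $S$-orbit would then have finite orbit, forcing $Y$ to be finite, a contradiction; so $n=0$, the action is faithful, and the first part applies.

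The argument is essentially formal. The only steps that are not diagram-chasing are the use of surjectivity of $\pi$ to guarantee that $\pi^{-1}(B)$ is a \emph{proper} subset of $X$ (without which condition~(3) could not be invoked) and, in the $\Z$ case, the passage from topological transitivity to point transitivity in order to derive faithfulness from infiniteness; I do not expect a substantive obstacle.
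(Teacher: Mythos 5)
Your proof is correct and follows essentially the same route as the paper: transitivity and dense finite orbits push forward through the factor map, and a proper closed invariant subset of the factor pulls back to a proper closed invariant (hence finite) subset upstairs. The only difference is that you spell out, via the Baire-category/point-transitivity argument, the fact that an infinite factor of a $\Z$-system carries a faithful action, which the paper simply asserts; your justification of that fact is sound.
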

\begin{proof}
Suppose the system $(X, T)$ is CAM and let $\varphi\colon X\to Y$ be a factor map onto some system $(Y,S)$. Clearly $S$-periodic points are dense in $Y$, since $T$-periodic points are dense in $X$, and $(Y,S)$ is transitive since $(X,T)$ is. If $A \subset Y$ is an infinite closed $S$-invariant subset, then  $\pi^{-1}(A)$ is an infinite closed $T$-invariant subset of $X$.  Since the system $(X, T)$ is CAM, it follows that
$\pi^{-1}(A) = X$ and so we have that $ A = Y$.  Thus when the action $S$ on $Y$ is faithful, we have that $(Y,S)$ is CAM.

The conclusion  about $\Z$-CAM factors follows from the fact that the action on an infinite factor of a $\Z$-system is  automatically faithful.
\end{proof}

The next result follows quickly from the definitions (see for example~\cite{putnam} for background).
\begin{proposition}
Any system that is topologically orbit equivalent to a CAM system is also CAM.\end{proposition}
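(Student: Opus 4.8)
The plan is to unwind the definition of topological orbit equivalence and transport each clause of Definition~\ref{def:CAM} across the underlying homeomorphism. Recall that $(X,T)$ being topologically orbit equivalent to $(Y,S)$ means there is a homeomorphism $h\colon X\to Y$ carrying every $T$-orbit bijectively onto an $S$-orbit, i.e.\ inducing an isomorphism of the orbit equivalence relations. Two consequences of such an $h$ do essentially all the work: it restricts to a bijection between the points of $X$ with finite $T$-orbit and the points of $Y$ with finite $S$-orbit, since orbits are mapped onto orbits of equal cardinality; and it sets up an inclusion-preserving bijection between $T$-invariant subsets of $X$ and $S$-invariant subsets of $Y$, since invariance just means being a union of orbits, under which ``closed'', ``proper'' and ``finite'' are all preserved because $h$ is a homeomorphism that is bijective on orbits.

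Assuming $(X,T)$ is CAM, I would then verify the three conditions for $(Y,S)$. For condition~(2): the set $F_X$ of points with finite $T$-orbit is dense in $X$, $h(F_X)$ is precisely the set of points of $Y$ with finite $S$-orbit, and density is preserved by the homeomorphism $h$. For condition~(3): given a proper closed $S$-invariant set $B\subseteq Y$, the set $h^{-1}(B)$ is a proper closed $T$-invariant subset of $X$, hence finite by condition~(3) for $(X,T)$; being finite it is a finite union of finite $T$-orbits, and applying $h$ shows $B$ is a finite union of finite $S$-orbits, so $B$ is finite. For condition~(1): point transitivity transfers immediately, since a dense $T$-orbit is carried by $h$ onto the $S$-orbit of its image, which is dense because $h$ is a homeomorphism, and for compact metric systems point transitivity is equivalent to topological transitivity.

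The only clause meriting a separate word is faithfulness of the action on $Y$, and this is where I expect the only (minor) friction. Since a CAM system is infinite---indeed uncountable, by Proposition~\ref{prop:FurstUnctble}---and $h$ is a homeomorphism, $Y$ is infinite; when the acting group is $\mathbb{Z}$ this already forces faithfulness, because $S^n=\mathrm{Id}$ with $n\neq 0$ would make every orbit, and in particular a dense one supplied by transitivity, have at most $|n|$ points, contradicting that $Y$ is infinite, and for a general acting group faithfulness is taken as part of (or is immediate from) the notion of orbit equivalence in play. Beyond this remark and the bookkeeping of the orbit-to-orbit and invariant-set correspondences, I anticipate no real obstacle: as the paper says, the statement ``follows quickly from the definitions.''
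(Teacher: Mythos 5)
Your argument is correct and follows essentially the same route as the paper: transport each clause of Definition~\ref{def:CAM} through the orbit-equivalence homeomorphism $h$, using that invariant sets are unions of orbits, so closed, proper, finite (and infinite) invariant sets correspond under $h$ (the paper states the third clause contrapositively and declares the first two ``immediate''). The only slight imprecision is the blanket claim that point transitivity is equivalent to topological transitivity for compact metric systems; this is harmless here, since for a group action a dense orbit yields topological transitivity directly (and $Y$ is perfect, being homeomorphic to the perfect space $X$), or one can note that topological transitivity itself transfers because it can be phrased purely in terms of orbits of points meeting open sets.
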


\begin{proof}
The first two properties of Definition~\ref{def:CAM} are immediate. Suppose $(X,S)$ and $(Y,T)$ are topologically orbit equivalent $G$-systems via an orbit equivalence $h \colon X \to Y$.  If the system $(X,S)$ is not CAM, then there exists a proper infinite $G$-invariant closed subset $Z \subset X$. But then $h(Z)$ would is a proper infinite $G$-invariant closed subset of $Y$, and so $(Y,T)$ is not CAM.
\end{proof}

To close this section, we use results of~\cite{shi-xu-yu} we derive a result for  $\Z$-CAM systems (see Remark~\ref{remark:no-N} for contrasting behavior).
\begin{proposition}
If $(X,T)$ is an expansive $\Z$-CAM system, then $(X, T)$ is a subshift.
\end{proposition}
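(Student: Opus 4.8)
The plan is to reduce the proposition to the single assertion that the phase space $X$ is totally disconnected, after which one invokes the classical fact that an expansive $\Z$-system on a totally disconnected compact metric space is topologically conjugate to a subshift. For that last step, let $\delta>0$ be an expansive constant for $(X,T)$; compactness together with total disconnectedness gives a finite partition $\{P_{1},\dots,P_{k}\}$ of $X$ into nonempty clopen sets of diameter less than $\delta$, and the itinerary map $\phi\colon X\to\{1,\dots,k\}^{\Z}$ defined by $\phi(x)_{n}=i\Leftrightarrow T^{n}x\in P_{i}$ is continuous (the $P_{i}$ are clopen), injective (if $\phi(x)=\phi(y)$ then $d(T^{n}x,T^{n}y)<\delta$ for every $n$, so $x=y$ by expansiveness), and conjugates $T$ to the shift; a continuous injection of a compact space is a homeomorphism onto its image, which is a compact shift-invariant set, i.e.\ a subshift. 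So the whole content is the total disconnectedness of $X$, and this is where the hypothesis that $(X,T)$ is CAM---specifically condition (3) of Definition~\ref{def:CAM}---is essential: expansiveness with dense periodic points alone is not enough, as a hyperbolic toral automorphism shows.

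To prove that $X$ is totally disconnected I would argue by contradiction. Here $X$ is infinite, hence perfect, by Proposition~\ref{prop:FurstUnctble}. Suppose some connected component $C$ of $X$ is nondegenerate, hence infinite. Since $T$ is a homeomorphism, each $T^{n}C$ is again a nondegenerate component, so $Y:=\overline{\bigcup_{n\in\Z}T^{n}C}$ is a closed, $T$-invariant, infinite set; by condition (3) of Definition~\ref{def:CAM} it cannot be proper, so $Y=X$ and the orbit of $C$ is dense. Now split into two cases. If $T^{n_{0}}C=C$ for some minimal $n_{0}\ge 1$, then $C,TC,\dots,T^{n_{0}-1}C$ are distinct (by minimality), hence pairwise disjoint, connected components whose union is closed, $T$-invariant and infinite, hence all of $X$; thus $X$ is a disjoint union of $n_{0}$ clopen sets cyclically permuted by $T$, and in particular $C$ is clopen and therefore perfect. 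The periodic points are dense in $X$, hence dense in the open set $C$, so $C\cap\mathrm{Per}(X)=\bigcup_{m\ge 1}\bigl(C\cap\mathrm{Fix}(T^{m})\bigr)$ is a dense union of closed subsets of the compact (Baire) space $C$; hence some $C\cap\mathrm{Fix}(T^{m})$ has nonempty interior in $C$, i.e.\ $T^{m}$ equals the identity on a nonempty open subset $U$ of the perfect set $C$. Choosing $x\in U$ and $y_{j}\in U\setminus\{x\}$ with $y_{j}\to x$, the pair $(x,y_{j})$ is fixed by $T^{m}$, so $\sup_{n\in\Z}d(T^{n}x,T^{n}y_{j})=\max_{0\le n<m}d(T^{n}x,T^{n}y_{j})\to 0$ as $j\to\infty$, contradicting expansiveness.

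The remaining case is that the translates $T^{n}C$, $n\in\Z$, are pairwise disjoint, so that $X$ contains infinitely many disjoint nondegenerate continua whose union is dense; this is the heart of the argument, and it is here that I would bring in the results of~\cite{shi-xu-yu}. Their structural analysis of expansive homeomorphisms whose phase space fails to be totally disconnected rules out such a configuration on a CAM system---concretely, from a nondegenerate continuum carried around densely by an expansive homeomorphism one extracts an infinite proper closed $T$-invariant subset, contradicting condition (3) of Definition~\ref{def:CAM}. This continuum-theoretic input is the one genuinely non-elementary step, and I expect it to be the main obstacle in turning this plan into a complete proof; granting it, $X$ is totally disconnected and the itinerary map of the first paragraph presents $(X,T)$ as a subshift.
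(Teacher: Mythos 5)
Your plan has a genuine gap, and you identify it yourself: the case of a nondegenerate component $C$ whose translates $T^{n}C$ are pairwise disjoint is never actually handled. The claim that ``from a nondegenerate continuum carried around densely by an expansive homeomorphism one extracts an infinite proper closed $T$-invariant subset'' is not proved, and it is not a statement you can simply quote from~\cite{shi-xu-yu}: the set of points lying in nondegenerate components need not be closed, the union of the translates of $C$ is dense by your own argument (so its closure is all of $X$, not a proper subset), and that paper is not a general structure theory of expansive homeomorphisms on non--totally-disconnected spaces. Its Theorem 1.2 concerns expansive homeomorphisms that are \emph{pointwise recurrent}, so to invoke it you must first verify recurrence of every point --- a hypothesis your argument never establishes. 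As written, the ``heart of the argument'' is an acknowledged hope rather than a proof, so the proposal does not close.

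The fix is also where your route diverges from the paper's, which is much shorter: since $(X,T)$ is CAM it is weakly CAM, so every point is either periodic or has dense orbit; periodic points are recurrent, and a point with dense orbit in a perfect space (Proposition~\ref{prop:FurstUnctble} gives that $X$ has no isolated points) is an accumulation point of its own orbit, hence recurrent. Thus every point is recurrent, and~\cite[Theorem 1.2]{shi-xu-yu} applies directly to the expansive homeomorphism $T$ and yields that $(X,T)$ is a subshift --- no separate proof of total disconnectedness and no itinerary coding is needed (though your first paragraph's coding argument is correct and standard, and your periodic case of the disconnectedness argument is fine). If you want to salvage your two-case scheme, you would still end up needing pointwise recurrence or some comparable continuum-theoretic theorem to kill the wandering-continuum case, at which point you may as well use the recurrence observation from the start.
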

\begin{proof}
    Since the system $(X,T)$ is a $\Z$-CAM system, every point is either transitive or is periodic.  In particular, every point is recurrent.  By~\cite[Theorem 1.2]{shi-xu-yu}, it follows that $(X,T)$ is a subshift.
\end{proof}

\subsection{Many {$\mathbb{Z}$}-CAM systems}
\label{sec:more-Z}
In section~\ref{sec:Zcamsystem} we prove that there exists a  $\mathbb{Z}$-CAM subshift. We use this here to show that the existence of one such subshift implies that  CAM $\mathbb{Z}$-subshifts are abundant in a certain sense. In fact, for any $n \ge 2$, they are dense in the space of infinite totally transitive subsystems of the full shift $(X_{n},\sigma_{n})$ with the Hausdorff metric. They are not however a generic set, as~\cite[Theorem 1.4]{PS} shows there is a generic set of minimal subshifts in the space of totally transitive subshifts.

\begin{theorem}\label{thm:embedCAM}
If $(Y,\sigma_{Y})$ is an infinite mixing shift of finite type, then $(Y,\sigma_{Y})$ contains a CAM subshift.
\end{theorem}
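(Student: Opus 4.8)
The goal is to show any infinite mixing SFT $(Y,\sigma_Y)$ contains a CAM subshift. The natural strategy is to invoke Theorem~\ref{th:Z-CAM} (or rather the explicit construction behind it): we already have \emph{some} $\Z$-CAM subshift $(X,\sigma_X)$ over an alphabet $\CA$, and we want to embed a copy of it (or a CAM system derived from it) into $(Y,\sigma_Y)$. Since $(X,\sigma_X)$ is CAM, by Proposition~\ref{prop:FurstUnctble} it is perfect and uncountable, hence has positive... no—actually it may have zero entropy. So a naive appeal to Krieger's embedding theorem (which requires strict entropy inequality and a periodic-point condition) does not directly apply, and even if it did, Krieger's theorem embeds into a \emph{mixing} SFT, not into an arbitrary prescribed target in a way that keeps the image closed. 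The cleaner route: build the CAM subshift \emph{inside} $Y$ directly, mimicking the Section~\ref{sec:construction} construction but using words from $\CL(Y)$ as building blocks. Concretely, pick a fixed point or short periodic word $p$ in $Y$ to play the role of the symbol $0$, and pick a word $q\in\CL(Y)$ with $q\neq$ (a power of) $p$ to play the role of the symbol $1$; mixing guarantees that for all sufficiently long $n$ there are connecting words $u_n$ so that $p^{a}u q^{b}u' p^{c}\cdots$ all lie in $\CL(Y)$. Then run the CAM construction from Section~\ref{sec:construction} verbatim, replacing each $0$ by $p$ and each $1$ by $q$ (with fixed-length spacers), obtaining a subshift $Z\subseteq Y$.

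The verification then has three parts, matching Definition~\ref{def:CAM}. \textbf{(1)} Faithfulness and topological transitivity of $(Z,\sigma)$: transitivity is inherited from the construction (the construction produces a transitive, indeed point-transitive, system), and faithfulness of a $\Z$-action on an infinite space is automatic. \textbf{(2)} Dense periodic points: the images under the substitution of the dense periodic points of $(X,\sigma_X)$ give periodic points of $(Z,\sigma)$, and because the substitution is by fixed-length blocks the density is preserved (the cylinder sets of $Z$ are indexed by admissible concatenations of the blocks, and each such is approximated by a periodic concatenation). \textbf{(3)} Every proper closed invariant subset of $Z$ is finite: this is where one must show the substitution is ``recognizable'' enough that closed invariant subsets of $Z$ pull back to closed invariant subsets of $X$. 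Since the blocks have a fixed common length $L$ and begin with a distinguished marker (the word $p$, say, chosen so it occurs in admissible $Z$-sequences only at block boundaries—this is arranged using mixing to make $p$ long enough to be ``synchronizing'' in $Z$), one gets a continuous factor-type correspondence between (a finite-orbit extension of) $Z$ and $X$ that carries infinite closed invariant sets to infinite closed invariant sets. Then the CAM property of $X$ forces the pullback to be all of $X$, hence the original set is all of $Z$.

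\textbf{Main obstacle.} The crux is step (3): establishing the recognizability / synchronization of the block substitution inside the ambient SFT $Y$. One must choose the marker word $p$ (and the spacer/connecting words supplied by mixing) carefully so that in any bi-infinite point of $Z$ the block decomposition is unique and can be detected by a sliding window of bounded size; only then does ``infinite closed invariant subset of $Z$'' translate to ``infinite closed invariant subset of $X$.'' A secondary subtlety is that the connecting words forced by mixing have length depending on which blocks are being joined, so to keep all blocks a fixed length $L$ one pads; one must check the padding does not create spurious occurrences of the marker. A clean way to sidestep some of this is to first prove the weak CAM property (all orbits finite or dense) — which only needs the orbit of a single aperiodic point of $Z$ to be dense, again transported from $X$ via the substitution — and then invoke Proposition~\ref{prop:weak-to-strong}, since $Z$, being a subshift, is expansive. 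This reduces the recognizability burden to: an aperiodic point of $Z$ decodes to an aperiodic point of $X$ (so its $X$-orbit is dense, hence its $Z$-orbit is dense), which is a much lighter requirement than full recognizability.
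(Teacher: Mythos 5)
Your overall strategy --- code the already-constructed CAM shift $(X,\sigma)$ into $Y$ by a fixed-length block substitution and verify CAM for the image --- is genuinely different from the paper's proof, and it can be made to work; but as written it both misdiagnoses the alternative and leaves its own decisive step open. On the first point: your reason for dismissing Krieger's theorem is mistaken. Krieger's Embedding Theorem embeds a subshift into exactly a \emph{prescribed} mixing SFT $Y$, and the image of such an embedding is automatically a closed shift-invariant set, so ``keeping the image closed'' is not an issue. The genuine obstructions are only the hypotheses: $Y$ may have entropy at most $h(X)$, and $Y$ may lack periodic points of small period (for instance no fixed points, while $X$ has two). The paper removes both at once by passing to a height-$m$ tower: it embeds $(X_{2}^{(m)},\sigma_{2}^{(m)})$ into $Y$ for $m$ large (the entropy drops like $1/m$, and a Perron--Frobenius count gives the periodic-point inequality $q_n(X_2^{(m)})\le q_n(Y)$), observes that the tower over the CAM shift $X$ is still CAM, and restricts the embedding to it.

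On the second point: the step you yourself call the crux --- recognizability of the block decomposition --- is not carried out, and even your fallback (weak CAM plus Proposition~\ref{prop:weak-to-strong}) presupposes that every point of the constructed subshift ``decodes'' to a point of $X$, which you do not establish. The gap can be closed, and in fact without any marker or recognizability argument: choose a vertex $v$ in a graph presentation of $Y$ and, using mixing, two \emph{distinct cycles} $P,Q$ at $v$ of one common length $L$. Then every concatenation of $P$'s and $Q$'s lies in $\CL(Y)$, the induced substitution $\Phi\colon X\to Y$ satisfies $\Phi\circ\sigma=\sigma_Y^{L}\circ\Phi$, and $Z=\bigcup_{i=0}^{L-1}\sigma_Y^{i}\Phi(X)$ is already compact and $\sigma_Y$-invariant, so there are no extra limit points to control: every $z\in Z$ is of the form $\sigma_Y^{i}\Phi(x)$ with $x\in X$, $0\le i<L$. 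Consequently $(x,i)\mapsto\sigma_Y^{i}\Phi(x)$ is a factor map from the height-$L$ tower over $(X,\sigma)$ onto $(Z,\sigma_Y)$; the tower is CAM because $X$ is, $Z$ is infinite, and hence $Z$ is CAM by the paper's proposition that an infinite factor of a $\Z$-CAM system is CAM (alternatively, run your weak-CAM-plus-expansivity argument directly on this explicit description of $Z$). Two smaller corrections: a mixing SFT need not contain a fixed point, so ``pick a fixed point $p$ in $Y$'' must be replaced by the equal-length cycle choice above; and there is no need to re-run the frequency estimates of Section~\ref{sec:construction} with blocks in place of letters once the tower/factor viewpoint is adopted. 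With these repairs your route is a valid, and arguably more elementary, alternative to the paper's use of Krieger's theorem, at the cost of not yielding the quantitative embedding statement the paper gets for towers over the full $2$-shift.
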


\begin{proof}
Let $(X,\sigma)$ be the $\Z$-CAM shift constructed in Section~\ref{sec:Zcamsystem}, and note that $(X,\sigma)$ is contained in the full 2-shift $(X_{2},\sigma_{2})$.
Given $m \ge 1$, let $(X^{(m)},\sigma^{(m)})$ denote the height $m$ discrete tower over $(X,\sigma)$, meaning that  $X^{(m)} = X \times \{0,\ldots,m-1\}$ and
\begin{equation*}
\sigma^{(m)}(x,i) =
\begin{cases}
(x,i+1) \textrm{ mod } m & \textrm{if } i< m-1\\
(\sigma(x),0) & \textrm{if } i=m-1.
\end{cases}
\end{equation*}
Since $(X,\sigma)$ is CAM, it is straightforward to check that $(X^{(m)},\sigma^{(m)})$ is also CAM.

Given an infinite mixing shift of finite type $(X, \sigma_Y$), we show  that
there exists $m\geq 1$ such that $(X_{2}^{(m)},\sigma_{2}^{(m)})$ embeds into $(Y,\sigma_{Y})$. Since $(X,\sigma)$ is contained in $(X_{2},\sigma_{2})$, once we have this result the theorem follows.

Let $q_{n}(Z)$ denotes the points of least period $n$ in the subshift $Z$.
By Krieger's Embedding Theorem~\cite{krieger}, it suffices to show there exists $m\geq 1$ such that both of the following hold:
\begin{enumerate}
\item
$h(\sigma_{2}^{(m)}) < h(\sigma_{Y})$.
\item
$q_{n}(X_{2}^{(m)}) \le q_{n}(Y)$ for all $n \ge 1$.
\end{enumerate}

For the entropy condition, note that
$h(\sigma_{2}^{(m)}) = \frac{1}{m}h(\sigma_{2})$ for all $m\geq 1$, and so by choosing $m$ sufficiently large we can guarantee that
$h(\sigma_{2}^{(m)}) < h(\sigma_Y)$.  Thus it suffices to find $m$ satisfying the periodic point condition.



To show this, we recall some notation and results (see for example~\cite[Section 10.1]{LM}).  Let $A$ be a $\mathbb{Z}_{+}$-matrix presenting the shift $(Y,\sigma_{Y})$.
The function $q_n$ satisfies the formula
\begin{equation}
    \label{eq:qn}
q_{n}(Y) = \tr_{n}(A) = \sum_{d \mid n}\mu(\frac{n}{d})\tr(A^{d}),
\end{equation}
where $\mu$ denotes the M\"{o}bius function and $\tr$ the trace. Letting $s^{\times}(A)$ denote the nonzero spectrum of $A$ (excluding eigenvalues of modulus one) and $\lambda_{A}$ denote the Perron-Frobenius eigenvalue for $A$, then  for any $mn > 1$, we have
$$\tr_{mn}(A) \ge \lambda_{A}^{mn} - \frac{\lambda_{A}^{mn/2+1} - \lambda_{A}}{\lambda_{A}-1}
- \sum_{\mu \in s^{\times}(A), \mu \ne \lambda_{A}}\frac{|\mu|^{mn+1}-|\mu|}{|\mu|-1}.$$
Let $S = \max\{|\mu| : \mu \in s^{\times}(A), \mu \ne \lambda_{A}\}$ and $K_{1} = \frac{\lambda_{A}}{\lambda_{A}-1}$. It follows that
$$\tr_{mn}(A) \ge \lambda_{A}^{mn}-K_{1}\left(\lambda_{A}^{mn/2}-1\right)-K_{2}(S^{mn}-1)$$
for some constant $K_{2}$ which does not depend on either $m$ or $n$. By the Perron-Frobenius Theorem, we have that  $S/\lambda_{A} < 1$ and so
$$\frac{1}{\lambda_{A}^{mn}}\tr_{mn}(A) \ge 1 - K_{1}(\lambda_{A}^{-mn/2}-\lambda_{A}^{-mn}) - K_{2}\left[\left(\frac{S}{\lambda}\right)^{mn}-\lambda_{A}^{-mn}\right].$$
Combining this with~\eqref{eq:qn},
there is some $m_{1}\geq 1$ such that for all $m \ge m_{1}$ and all $n \ge 1$,
\begin{equation}
    \label{eq:lower-bound}
\frac{1}{\lambda_{A}^{m_{1}n}}q_{m_{1}n}(Y) \ge \frac{2}{3}.
\end{equation}
Considering the full $2$-shift $(X_{2},\sigma_{2})$, we have that
$$q_{mn}(X_{2}^{(m)}) = m q_{n}(X^{(2)}) \le m2^{n}.$$
Thus for all $m,n\geq 1$
$$\frac{1}{\lambda_{A}^{mn}} q_{mn}(X_{2}^{(m)}) \le m\left( \frac{2}{\lambda_{A}^{m}} \right)^{n}.$$
Taking $m_{2}$ such that $\frac{2}{\lambda_{A}^{m}} < \frac{1}{3m}$ for all $m \ge m_{2}$, then for for all $ m\geq m_2$ and any $n\geq 1$, we have that
$$\frac{1}{\lambda_{A}^{mn}} q_{mn}(X_{2}^{(m)}) \le m\left( \frac{2}{\lambda_{A}^{m}} \right)^{n} < m\left( \frac{1}{3m} \right)^{n} < 2/3.$$
Taking $M = \max\{m_{1},m_{2}\}$ and combining this with~\eqref{eq:lower-bound},
we have that for all $n\geq 1$,
$$\frac{1}{\lambda_{A}^{Mn}}q_{Mn}(Y) \ge \frac{2}{3} > \frac{1}{\lambda_{A}^{Mn}}q_{Mn}(X_{2}^{(M)}).$$
Thus it follows that $q_{Mn}(Y) \ge q_{Mn}(X_{2}^{(M)})$,
verifying the necessary periodic point condition for the application of Krieger's Embedding Theorem.
\end{proof}
For a full shift $(X_{n},\sigma_{n})$, let $\mathcal{S}(X_{n})$ be the space of all subshifts of $X_{n}$, meaning that  $\mathcal{S}(X_{n})$ is the set of all compact $\sigma_{n}$-invariant subsets of $X_{n}$ endowed with the Hausdorff metric (more background on these spaces can be found in~\cite{PS}). We  use Theorem~\ref{thm:embedCAM} to show that for any $n \ge 2$, the closure of the set of CAM shifts in $\mathcal{S}(X_{n})$ contains all infinite totally transitive subshifts.
\begin{theorem}
Fix $n \ge 2$ and let $(W,\sigma_{W})$ be an infinite totally transitive subshift in $X_{n}$. Then $W$ is a limit of $\Z$-CAM subshifts in $\mathcal{S}(X_{n})$.
\end{theorem}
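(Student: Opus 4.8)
The plan is to reduce the theorem to a strengthening of Theorem~\ref{thm:embedCAM}: every infinite mixing shift of finite type $Y$ contains a $\Z$-CAM subshift $Z$ with $\mathcal{L}_m(Z) = \mathcal{L}_m(Y)$ for any prescribed length $m$. One then applies this to the shift of finite type approximations of $W$ from above. Recall that in $\mathcal{S}(X_n)$ a sequence of subshifts $(Z_j)$ converges to $W$ if and only if for every $m \ge 1$ one has $\mathcal{L}_m(Z_j) = \mathcal{L}_m(W)$ for all sufficiently large $j$. So it suffices to produce, for each $m \ge 1$, a $\Z$-CAM subshift $Z \subseteq X_n$ with $\mathcal{L}_m(Z) = \mathcal{L}_m(W)$.

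For $k \ge 1$ let $W^{(k)} \subseteq X_n$ be the shift of finite type whose forbidden words are exactly the words of length $k+1$ that are not in $\mathcal{L}_{k+1}(W)$. Then $W \subseteq W^{(k)}$, so $W^{(k)}$ is infinite, and $\mathcal{L}_j(W^{(k)}) = \mathcal{L}_j(W)$ for all $j \le k$, so $W^{(k)} \to W$ in $\mathcal{S}(X_n)$. Moreover $W^{(k)}$ is topologically transitive: given $u, v \in \mathcal{L}(W^{(k)})$, let $a$ and $b$ be the length-$(k-1)$ suffix of $u$ and prefix of $v$; since $a, b \in \mathcal{L}_{k-1}(W)$ and $W$ is transitive, there is a word $w$ with $awb \in \mathcal{L}(W)$, and then $uwv \in \mathcal{L}(W^{(k)})$ because every length-$k$ subword of $uwv$ lies within $u$, within $v$, or within $awb$. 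Now total transitivity upgrades transitivity to mixing. Were $W^{(k)}$ not mixing, then being an infinite transitive shift of finite type it would decompose into $d \ge 2$ cyclically permuted clopen pieces, giving a continuous factor map $\pi\colon W^{(k)} \to \Z/d\Z$ intertwining $\sigma$ with the rotation $x \mapsto x+1$; since that rotation is transitive on $\Z/d\Z$ and $\pi(W)$ is a nonempty closed invariant subset, $\pi$ would restrict to a surjection $W \to \Z/d\Z$, so $(W,\sigma^d)$ would factor onto the identity map on $d > 1$ points and could not be transitive, contradicting total transitivity of $W$. Hence each $W^{(k)}$ is an infinite mixing shift of finite type.

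Granting the strengthening, apply it to $Y = W^{(k)}$ with prescribed length $k$ to obtain a $\Z$-CAM subshift $Z_k \subseteq W^{(k)} \subseteq X_n$ with $\mathcal{L}_k(Z_k) = \mathcal{L}_k(W^{(k)}) = \mathcal{L}_k(W)$. Since the languages of $Z_k$ and $W$ then agree through length $k$ for every $k$, the sequence $(Z_k)$ converges to $W$ in $\mathcal{S}(X_n)$, which is the theorem. For the strengthening one must build, inside a given infinite mixing shift of finite type $Y$, a $\Z$-CAM subshift $Z$ with $\mathcal{L}_m(Z) = \mathcal{L}_m(Y)$; the inclusion $\mathcal{L}_m(Z) \subseteq \mathcal{L}_m(Y)$ is automatic once $Z \subseteq Y$, so the content is to realize inside $Y$ a CAM subshift containing every word of $\mathcal{L}_m(Y)$. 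I would do this by rerunning the construction of the $\Z$-CAM subshift of Section~\ref{sec:Zcamsystem} relative to $Y$: its hierarchical building blocks are now taken to be long words of $\mathcal{L}(Y)$, chosen at the lowest level so that every word of $\mathcal{L}_m(Y)$ appears in one of them, and consecutive blocks are concatenated using connecting words supplied by the mixing (specification) property of $Y$, so that every word produced lies in $\mathcal{L}(Y)$ and thus $Z \subseteq Y$. The periodic points and the transitive point are built exactly as in the absolute construction, and the verification that the expansive system $Z$ is transitive, faithful, has dense periodic points, and has only finite proper closed invariant subsets is the relative analogue of Proposition~\ref{prop:Z-CAM}, passing from the weak form to the full CAM property via Proposition~\ref{prop:weak-to-strong}.

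The main obstacle is precisely this last step: transplanting the construction of Section~\ref{sec:Zcamsystem} into an arbitrary infinite mixing shift of finite type and re-verifying the CAM conditions in the relative setting. The mixing (specification) property of $Y$ plays the role of the concatenation freedom used in the absolute construction, so the bookkeeping should be essentially unchanged; the one genuinely new point is arranging the lowest-level blocks to exhaust $\mathcal{L}_m(Y)$ while still forcing every infinite orbit of $Z$ to be dense, so that no infinite proper closed invariant subset appears.
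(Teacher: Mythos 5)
Your reduction to shift-of-finite-type approximations is sound and matches the paper's first step: take the Markov approximation $W^{(k)}$, note it is infinite, has the same short language as $W$, and use total transitivity of $W$ to rule out a nontrivial cyclic decomposition so that $W^{(k)}$ is mixing (up to a minor indexing slip, your gluing words $a,b$ should have length $k$, not $k-1$, to cover all straddling $(k+1)$-blocks). You also correctly identify the real difficulty, which the paper flags explicitly: Theorem~\ref{thm:embedCAM} embeds a CAM subshift into a mixing SFT but gives no control on its language, hence no control on the Hausdorff distance. The gap is that your resolution of this difficulty --- a ``strengthening'' of Theorem~\ref{thm:embedCAM} producing a CAM subshift $Z\subset Y$ with $\mathcal{L}_m(Z)=\mathcal{L}_m(Y)$ --- is only asserted, with a sketch amounting to ``rerun Section~\ref{sec:Zcamsystem} relative to $Y$ using specification.'' That transplantation is not routine: the construction in Section~\ref{sec:Zcamsystem} leans heavily on the full shift, where every concatenation of blocks is admissible, all level-$k$ words have exactly the same length, and the key non-occurrence statements (no level-$k$ word occurs inside the square of another) are proved by delicate frequency and minimal-period counting tied to that rigid equal-length structure. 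Inside a general mixing SFT you must insert transition words between blocks; these destroy the equal-length bookkeeping, create new potential occurrences that the counting arguments do not currently control, and enlarging the lowest level to exhaust $\mathcal{L}_m(Y)$ multiplies the number of base words, so all the estimates behind Propositions~\ref{prop:Z-CAM} and the finite-or-dense dichotomy would have to be redone from scratch. As written, this is a missing proof of the central step, not a detail.

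The paper avoids this entirely by a different device: having chosen the Markov approximation $Z$ of $W$ with the same $m$-language, it invokes the proof of~\cite[Theorem 6.4]{PS} to find an infinite mixing shift of finite type $Y\subset Z$ with the property that \emph{every point} of $Y$ contains every word of length $m$ from $W$. Then Theorem~\ref{thm:embedCAM} can be applied to $Y$ as a black box: whatever CAM subshift it produces inside $Y$ automatically has $m$-language equal to that of $W$, hence lies within $2^{-m}$ of $W$ in the Hausdorff metric. If you want to salvage your route, either prove your relative construction in full (a substantial undertaking) or replace it with this trick of first shrinking $Y$ so that the desired words are forced into every point, after which no strengthening of the embedding theorem is needed.
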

\begin{proof}
Fix $m \ge 1$. Since $W$ is infinite and totally transitive, using standard Markov approximation there exists an infinite mixing shift of finite type $(Z,\sigma_{Z})$ such that $d_{H}(Z,W) < 2^{-m}$ where $d_{H}$ denotes the Hausdorff distance. This means that $Z$ and $W$ have the same $m$-languages. Using the proof of~\cite[Theorem 6.4]{PS}, it follows that there is an infinite mixing shift of finite type $Y \subset Z$ such that every point in $Y$ contains every word of length $m$ from $W$.
In particular, the systems $Y$ and $W$ have the same $m$-languages. Applying Theorem~\ref{thm:embedCAM} to the system $Y$,
we have that $Y$ contains a CAM subshift $X$ such that the $m$-languages of $X$ and $W$ agree, and hence $d_{H}(X,W) < 2^{-m}$.
\end{proof}
We note that we can not directly apply Theorem~\ref{thm:embedCAM}; while it shows that we can embed a CAM shift into $Z$, it does not a priori guarantee that the embedded CAM is close in the Hausdorff metric to $Z$.

\subsection{Dynamical restrictions on CAM actions}
\label{sec:dynamical-restrictions}
\begin{theorem}
If $X$ is locally connected compact metric space, then $X$ does not support an expansive $\mathbb{Z}$-CAM action.
\end{theorem}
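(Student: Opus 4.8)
The plan is to exploit the structural consequences that expansivity and the $\Z$-CAM property jointly impose. By the proposition just before Section~\ref{sec:more-Z}, an expansive $\Z$-CAM system is a subshift, so we may assume $(X,\sigma)\subset \mathcal A^\Z$ for some finite alphabet $\mathcal A$. But a subshift on a finite alphabet is totally disconnected: every cylinder set is clopen, and distinct points are separated by clopen cylinders, so $X$ has a basis of clopen sets and hence every connected component is a singleton. On the other hand, Proposition~\ref{prop:FurstUnctble} shows that a CAM system is perfect, so $X$ has no isolated points. The claim is then that a nonempty perfect totally disconnected compact metric space cannot be locally connected: local connectedness would force each point to have arbitrarily small connected neighborhoods, but the only connected subsets of a totally disconnected space are singletons, which are not neighborhoods in a perfect space. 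This is the contradiction.

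Concretely, I would proceed in the following steps. First, invoke the earlier proposition to reduce to the case where $(X,\sigma)$ is a subshift. Second, observe that the cylinder sets $C_k(w)$ form a countable basis of clopen sets for $X$, so $X$ is zero-dimensional (totally disconnected). Third, recall that $X$ is perfect by Proposition~\ref{prop:FurstUnctble}, since a $\Z$-CAM system is infinite, has dense periodic points, and is point transitive. Fourth, derive the contradiction with local connectedness: if $X$ were locally connected, then for any $x \in X$ and any open $U \ni x$ there would be a connected open set $V$ with $x \in V \subset U$; but $V$, being a connected subset of a totally disconnected space, is a single point, contradicting the fact that $X$ has no isolated points (so no singleton is open). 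Hence $X$ is not locally connected, and no locally connected compact metric space supports an expansive $\Z$-CAM action.

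I do not expect any step to be a serious obstacle; the argument is essentially a chain of standard facts (expansive $\Z$-CAM $\Rightarrow$ subshift $\Rightarrow$ zero-dimensional, combined with perfect $\Rightarrow$ not locally connected). The only point requiring a word of care is the reduction in the first step, which relies on the already-established proposition that an expansive $\Z$-CAM system is a subshift; with that in hand the rest is topology. One could alternatively phrase the final contradiction via covering dimension — a nonempty, perfect, zero-dimensional space has a point at which it is not locally connected — but the direct argument above is cleaner and self-contained.
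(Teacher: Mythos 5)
Your proposal is correct, but it takes a genuinely different route from the paper. The paper proves the theorem directly, following Ma\~{n}\'{e}'s argument: using Hiraide's lower bound on the diameter of connected components of local stable sets and Kato's expansion estimate (both of which is where local connectedness enters), it constructs a Cantor set inside a stable set of a periodic point and extracts an aperiodic point whose forward orbit stays near one periodic point and whose backward orbit avoids a neighborhood of another, so its orbit is not dense --- contradicting (even the weak form of) the CAM property. You instead route everything through the earlier proposition that an expansive $\Z$-CAM system is a subshift (which in the paper rests on the Shi--Xu--Yu structure theorem for pointwise recurrent expansive homeomorphisms, and which does appear before the theorem in question, so there is no circularity): a subshift has a clopen basis of cylinders, hence $X$ is zero-dimensional; Proposition~\ref{prop:FurstUnctble} makes $X$ perfect (and faithfulness of a $\Z$-action already rules out finite $X$); and a nonempty perfect zero-dimensional space is never locally connected, since a connected open (or even connected) neighborhood would have to be a singleton with nonempty interior. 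Your argument is shorter and purely point-set topological once the subshift proposition is granted, at the cost of leaning on that external structure theorem; the paper's argument is independent of the subshift classification, works directly with stable sets, and in fact only uses the weakly CAM property, which is why the authors keep it even though your reduction is available.
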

Our argument closely follows the one used by Ma\~{n}\'{e}~\cite{Mane}.
\begin{proof}
 Suppose $(X,T)$ is CAM. Fix some periodic points $x,y \in X$ and choose $\varepsilon > 0 $ such that $\bigcup_{i=1}^{\ell}T^{i}(B_{\varepsilon}(x))$ is a proper subset of $X$ and $y \not \in B_{2\varepsilon}(x)$; such points exist since $(X,T)$ is CAM. Without loss of generality, we can assume that both $x$ and $y$ have period $\ell$.

Consider the $\varepsilon$-stable set of $x$, defined by
$$W_\varepsilon^s(x) = \{
y\in X: d(T^nx, T^ny)\leq\varepsilon \text{ for all } n\geq 0
\},$$
and let $CW_{\varepsilon}^{s}(x)$ denote the connected component of $x$ in $W_{\varepsilon}^{s}$.
By~\cite[Proposition C]{Hiraide1990}, making $\varepsilon > 0$ smaller if necessary, there exists $\delta > 0$ such that for all $x \in X$, we have
$\diam(CW_{\varepsilon}^{s}(x)) \ge \delta$.

Given such $\delta$, by~\cite[Corollary 2.4]{Kato1993}, there exists $M > 0$ such that for all $m \ge M$, if $\diam(CW_{\varepsilon}^{s}(x)) \ge \delta$, then $\textrm{diam}(T^{-m}(CW_{\varepsilon}^{s}(x)) \ge 4 \delta$. Choose such an $m$ and without loss of generality, assume $\delta$ is sufficiently small such that $B_{\delta}(y) \cap B_{\varepsilon}(x) = \emptyset$ and $\bigcup_{i=1}^{\ell}T^{i}(B_{\delta}(y))$ is proper in $X$. Moreover we may assume that $\ell$ and $m$ are relatively prime.  It follows there exists two disjoint connected components $A^{1}_{1}, A^{1}_{2}$ of $T^{-m}(CW_{\varepsilon}^{s}(x)) \setminus B_{\delta/4}(y)$ for which $\diam(A_{i}^{1}) = \delta$ for $i=1,2$.
We now apply the same procedure to both $A^{1}_1$ and $A^1_2$ to obtain four connected components $\{A_{i}^{2}\}_{i=1}^{4}$ of $T^{-2m}(CW_{\varepsilon}^{s}(x)) \setminus B_{\delta/4}(y)$ for which $\diam(A_{i}^{2}) = \delta$ for all $1 \le i \le 4$.
Inductively we continue the procedure to generate, for every $r\geq 1$, $2^{r}$ connected components $\{A_{i}^{r}\}_{i=1}^{2^{r}}$ of $T^{-rm}(CW_{\varepsilon}^{s}(x)) \setminus B_{\delta/4}(y)$ such that $\textrm{diam}(A_{i}^{r}) = \delta$ for all $1 \le i \le 2^{r}$. Let $C = \bigcap_{r=1}^{\infty}\bigcup_{i=1}^{2^{r}}T^{mr}(A_{i}^{r})$.
Note that $C \subset CW_{\varepsilon}^{s}(x)$, is compact, and is uncountable (in fact $C$ is a Cantor set). Since $(X,T)$ is expansive, there are at most
countably many $T$-periodic points,
and so we may choose a $T$-aperiodic point $p \in C$. Since $C \subset CW_{\varepsilon}^{s}(x)$, $T^{k}(C) \subset B_{\varepsilon}(T^{k}x)$ for all $k \ge 0$, and it follows that the forward $T$-orbit of $p$ is contained in $B_{\varepsilon}(x)$. Since $B_{\varepsilon}(x) \cap B_{\delta}(y) = \emptyset$, $y$ is not a limit point of the forward orbit of $p$.
We claim that the backward orbit of $p$ does not accumulate on $y$. By construction we have that $T^{-km}(C) \cap B_{\delta/4}(y) = \emptyset$ for all $k \ge 1$.
Continuity implies we may choose $\gamma > 0$ such that if $d(u,v) < \gamma$ then $\max\{d(T^{i}(u),T^{i}(v)) : 1 \le i \le \ell m\} < \delta/4$. Now suppose there exists $k \ge 0$ such that $T^{-k}(p) \in B_{\gamma}(y)$; without loss of generality assume that $k$ is much larger than $\ell m$.
Since $\ell$ and $m$ are relatively prime we may choose $a,b$ such that $k = a\ell+bm$ where $1 \le a \le m$ and $b \ge 1$.
Then $a\ell \le \ell m$ so $T^{-k+a\ell}(p) \in B_{\delta/4}(T^{a\ell}(y)) = B_{\delta/4}(y)$. But $-k+a\ell=-bm$ so this implies $T^{-bm}(p) \in B_{\delta/4}(y)$; since $b \ge 1$, this is a contradiction and the claim is proven.
\end{proof}

\begin{remark}
\label{remark:no-N}
Remarkably, if $(X,T)$ is an $\mathbb{N}$-CAM action such that every non-transitive  point not only has finite orbit but actually is periodic, then $X$ cannot be totally disconnected. This follows from~\cite[Proposition 2.1]{KW} and the discussion that follows. In particular, an $\mathbb{N}$-CAM subshift must have points with finite orbit which are not periodic. \end{remark}

Many existing examples (see Section~\ref{sec:examples}) of CAM systems are {\em algebraic},
meaning that the system consists of the action of a group by automorphisms on some compact group.
However, this can not happen for a $\Z$-action.

        \begin{theorem}\label{thm:noalgebraicZ}
If $\alpha \colon X \to X$ is an automorphism of a compact group $X$, then $(X,\alpha)$ is not a CAM system.
\end{theorem}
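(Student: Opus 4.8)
The plan is to argue by contradiction: suppose $(X,\alpha)$ is CAM, where $\alpha$ is an automorphism of a compact group $X$. The starting point is Proposition~\ref{prop:FurstUnctble}, which tells us $X$ is infinite, perfect, and uncountable. Since $\alpha$ is a group automorphism, the identity $e \in X$ is a fixed point, hence a periodic point of period $1$. The crucial structural feature I want to exploit is that the set of periodic points of $\alpha$ forms a subgroup: if $\alpha^m(x) = x$ and $\alpha^n(y) = y$, then $\alpha^{mn}(xy) = xy$ (using that $\alpha$ is a homomorphism and $X$ is abelian — or more carefully, handling the nonabelian case by noting $\alpha^{mn}$ fixes both $x$ and $y$ so fixes the subgroup they generate). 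In any case, the union of all finite $\alpha$-orbits is a (not necessarily closed) $\alpha$-invariant \emph{subgroup} $P \le X$, which by CAM condition (2) is dense in $X$.

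Next I would pin down the closed invariant subgroups. For each $N \ge 1$, let $\mathrm{Fix}(\alpha^N) = \{x \in X : \alpha^N(x) = x\}$; this is a \emph{closed} subgroup of $X$ and it is $\alpha$-invariant. By CAM condition (3), each $\mathrm{Fix}(\alpha^N)$ is either finite or all of $X$. If $\mathrm{Fix}(\alpha^N) = X$ for some $N$, then $\alpha^N = \mathrm{Id}$, which forces every point to be periodic; then $X = P$ is a countable union of finite orbits, contradicting uncountability from Proposition~\ref{prop:FurstUnctble}. Therefore $\mathrm{Fix}(\alpha^N)$ is finite for \emph{every} $N \ge 1$. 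But the periodic subgroup is $P = \bigcup_{N \ge 1} \mathrm{Fix}(\alpha^N)$, an increasing union of finite groups, hence $P$ is countable. This contradicts the density of $P$ in the uncountable perfect space $X$ — a countable dense subset is fine topologically, but here is the real tension: I need more than countability of $P$ to derive a contradiction, since a countable set can be dense.

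The resolution, and what I expect to be the main obstacle, is to use the group structure to upgrade "$P$ dense" into "$P$ large." Here is the key step: since each $\mathrm{Fix}(\alpha^N)$ is a finite subgroup of the compact group $X$, and $X$ is perfect (no isolated points), I claim $X$ cannot contain a dense subgroup that is a countable increasing union of finite subgroups while being closed-invariant-rigid in the above sense. Concretely, I would invoke the structure of closed subgroups: the closure $\overline{\mathrm{Fix}(\alpha^N)} = \mathrm{Fix}(\alpha^N)$ is already closed, so the closures of the finite pieces do not grow. One clean route: pass to a point $x$ of some finite orbit with $x \ne e$ (exists since $P$ is infinite), and consider the closed subgroup $H = \overline{\langle \text{orbit of } x\rangle}$, which is $\alpha$-invariant; it is infinite (else $P$ couldn't be dense in infinite $X$... actually one must be careful), hence $H = X$, so $X$ is \emph{monothetic-like} — topologically generated by a single finite orbit, hence by finitely many elements of finite order, hence $X$ is a quotient of a finitely generated group all of whose generators have finite order. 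Combined with $X$ compact and the fact that $\mathrm{Fix}(\alpha^N)$ is finite for all $N$, one shows $X$ itself would have to be finite, the contradiction. The delicate point is making the "topologically generated by torsion elements + all $\mathrm{Fix}(\alpha^N)$ finite $\Rightarrow$ $X$ finite" implication rigorous; I would handle it by noting that a compact group topologically generated by a finite set of $\alpha$-periodic points, with $\alpha$ acting so that each $\mathrm{Fix}(\alpha^N)$ is finite, forces the generating set together with all its $\alpha$-images to lie in some finite $\mathrm{Fix}(\alpha^N)$, whose closed span is then that finite set itself — contradicting density in the infinite $X$.
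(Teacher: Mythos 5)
There is a genuine gap, and it is located exactly where you flagged ``the delicate point.'' The final implication --- that $X$ being topologically generated by periodic (hence, in your setting, torsion) elements, together with finiteness of every $\mathrm{Fix}(\alpha^N)$, forces $X$ to be finite --- is false, and no argument along these lines can close it. The reason is that everything you derive from the CAM hypothesis (faithfulness, transitivity, dense periodic points, each $\mathrm{Fix}(\alpha^N)$ a finite subgroup, the periodic points forming a countable dense locally finite subgroup $P$, the closed subgroup generated by any single finite orbit being finite) is also satisfied by the shift automorphism on the compact group $(\Z/2\Z)^{\Z}$: there the periodic points are dense, $\mathrm{Fix}(\sigma^N)$ has $2^N$ elements, $P$ is a countable dense union of finite subgroups, and $X$ is uncountable and perfect. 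That system fails to be CAM only because of condition (3) applied to closed invariant \emph{subsets} that are not subgroups (e.g.\ any proper infinite subshift of the full shift), and your argument never produces such a subset --- every closed invariant set you consider is a subgroup generated by a finite orbit, which is automatically finite and hence consistent with CAM. So the strategy cannot distinguish a hypothetical CAM automorphism from the full shift, and the contradiction you aim for does not exist at this level of generality. (A smaller, fixable slip: when $\alpha^N=\mathrm{Id}$ you conclude $X$ is a countable union of finite orbits, but there may be uncountably many orbits; the correct objection there is that an infinite system with all points periodic cannot be topologically transitive.)

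For comparison, the paper's proof has to do genuinely more work precisely to manufacture a proper infinite closed invariant subset. It first notes that a CAM algebraic system is almost minimal in Schmidt's sense, and invokes Schmidt's classification to reduce to $X$ being a torus, a solenoid, or zero-dimensional. Tori and solenoids are excluded by a dichotomy on eigenvalues (hyperbolicity gives proper infinite invariant sets; a unit-modulus eigenvalue gives an invariant rotation factor). In the zero-dimensional case the argument follows Kitchens: one codes by the cosets of a suitable open normal subgroup $K$, uses Kitchens' theorem to see the coded system $Y$ is a shift of finite type, shows $Y$ is infinite (this is where the claim that for every $n$ some $x_n$ satisfies $x_n^{-1}\alpha^n(x_n)\notin K$ is used), and then an infinite point-transitive SFT always contains a proper infinite closed shift-invariant subset, which pulls back to contradict CAM. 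That last step --- exhibiting an infinite invariant subset which is not a coset or subgroup --- is the content your proposal is missing.
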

To prove this, we use several results in the literature.
Suppose $\beta$ is an action of $\mathbb{Z}^{d}$ by automorphisms on some compact group $X$. Recall that Schmidt~\cite[Section 29]{schmidt} defines an action to be {\em almost minimal} if every closed proper $\beta$-invariant subgroup of $X$ is finite.  Clearly if $\beta$ is CAM, then it is almost minimal.  (We note that we do not know if the converse of this holds.)

\begin{proof}[Proof of Theorem~\ref{thm:noalgebraicZ}]
Without loss of generality, we can assume that $\alpha \colon X \to X$ is transitive, and hence ergodic with respect to the Haar measure on $X$.
If $\alpha$ is CAM, then it is almost minimal.
It is a consequence of~\cite[Theorem 29.2]{schmidt}  that $X$ is either zero-dimensional, a torus, or a solenoid.

First consider the toral case. If $\alpha$ has no eigenvalues of modulus one, then it is hyperbolic and hence not CAM. But if $\alpha$ has an eigenvalue on the unit circle, then it induces a rotation on some subset, and again it is not CAM.  By passing to an inverse limit, the solenoid case follows.

Thus we are left with the case that $X$ is zero dimensional.
We follow ideas in Kitchens~\cite{kitchens}.
Since $X$ is compact and zero dimensional, we can choose an open normal subgroup $K$ containing the identity.
Then the cosets of $K$ are disjoint and partition $X$;
since $X$ is compact, finitely many cosets
$\{a_{i}K\}$ suffice for partitioning $X$ and the subgroup $K$ is closed.

We claim for all $n \in\N$,  there exists $x_{n} \in X$ such that $x_{n}^{-1}\alpha^{n}(x_{n}) \not \in K$. We verify the claim by contradiction, supposing that $K$ does not have this property.  Note that $K$ is infinite (since it is open and $X$ is perfect) and so  there exists $m\in\N$ such that for all $x \in X$ we have $x^{-1}\alpha^{m}(x) \in K$, and hence $\alpha^{m}(x) \in xK$. This implies that $\alpha^{m}(K) \subset K$, and since $K$ is closed, proper and infinite, it follows that $\alpha$ is not CAM, a contradiction.  This verifies the claim about the subgroup $K$.

Since $X/K$ is a finite group,  we can  code orbits using the partition $\{a_{i}K\}$ to get a system $((X/K)^{\mathbb{Z}},\sigma)$ and factor map $\pi \colon X \to X/K$.
Let $Y \subset (X/K)^{\mathbb{Z}}$ be the image of $\pi$. Then $Y$ is a compact totally disconnected topological group and $(Y,\sigma)$ is expansive, and so by Kitchens~\cite[Theorem 1]{kitchens} the system $(Y,\sigma)$ is a one-step shift of finite type. Since $(X,\alpha)$ is transitive, the factor $(Y,\sigma)$ has a point with  dense forward orbit. We claim that $Y$ is infinite.
Suppose instead that $Y$ is finite, and choose $N$ such that $\sigma^{N}=\id$ on $Y$. Then for all $x \in X$, we have $\sigma^{N}(xK) = xK$.  It follows that  $\alpha^{N}(x)K = xK$, and hence $x^{-1}\alpha^{N}(x) \in K$.
However, we chose $K$ such that for all $n \in \mathbb{N}$ there exists $x_{n} \in X$ such that  $x_{n}^{-1}\alpha^{n}(x_{n}) \not \in K$, a contradiction, proving that $Y$ is infinite.
Summarizing, we have that $Y$ is an infinite shift of finite type that is point transitive, and hence contains a proper infinite closed $\sigma$-subset $Z \subset Y$. Let $A = \pi^{-1}(Z)$. Then $A$ is closed, $\alpha$-invariant, infinite, and proper, and so $(X,\alpha)$ is not CAM.
\end{proof}

\subsection{Weakly CAM systems}
\label{sec:weaklyCAM}

\begin{definition}
The topological $G$-system $(X, T)$ is a {\em  weakly CAM system} if all of the following hold:
\begin{enumerate}
\item
The system $(X,T)$ is transitive and the action of $T$ on $X$ is faithful.
\item
The space $X$ contains a dense set of $T$-periodic points.
\item
If $x \in X$, then the orbit of $x$ is either finite or dense in $X$.
\end{enumerate}
\end{definition}

It is immediate that a CAM system is a weakly CAM system (see Proposition~\ref{prop:weak-to-strong}), and to explore the relation between weakly CAM and  CAM we use a result that is well known for subshifts, adapting the argument to the general case.

\begin{lemma}
\label{lemma:expansive}
If $(X,T)$ is infinite and expansive, then it contains an aperiodic point.
\end{lemma}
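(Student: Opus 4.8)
The plan is to prove the contrapositive: if $(X,T)$ is expansive and every point of $X$ is periodic (has finite $G$-orbit), then $X$ is finite. The route is to show that such an $X$ is necessarily countable, hence zero-dimensional, hence topologically conjugate to a subshift — at which point I can invoke the classical fact, which is the subshift case of the lemma itself (the ``result well known for subshifts'' referred to above), that an infinite subshift contains an aperiodic point.

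First I would suppose every orbit is finite and, for $N\geq 1$, set $F_N=\{x\in X:\ |\{T_g x:g\in G\}|\le N\}$; each $F_N$ is closed and $X=\bigcup_{N\geq1}F_N$. The crucial point, and the only place expansiveness is really used, is that each $F_N$ is countable. For finitely generated $G$ (in particular $\Z$ and $\Z^d$) I would show $F_N$ is finite directly: if it were infinite, choose distinct $x_k\to x^\ast$ in the closed set $F_N$; passing to a subsequence, all the $x_k$ share one stabilizer $H\leq G$ of index at most $N$ (a finitely generated group has finitely many subgroups of each finite index), so every orbit $\{T_g x_k:g\in G\}$ is $\{T_r x_k:r\in R\}$ for one fixed finite transversal $R$ of $H$; continuity of the finitely many maps $T_r$, together with $T_g x_k=T_r x_k$ whenever $g\in rH$, then forces $d(T_g x_k,T_g x_l)<c$ for \emph{all} $g\in G$ once $k,l$ are large, where $c$ is an expansive constant — contradicting expansiveness since $x_k\neq x_l$. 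For general countable $G$ the same conclusion follows with more bookkeeping, e.g.\ via $F_N\subseteq\bigcup_H\mathrm{Fix}(H)$ over the normal subgroups $H\trianglelefteq G$ of index dividing $N!$ (the core of an index-$\le N$ subgroup has index dividing $N!$), using that on each compact set $\mathrm{Fix}(H)=\{x:T_hx=x\text{ for all }h\in H\}$ the \emph{finite} group $G/H$ acts expansively, which forces that space to be finite (for small $\delta$, $d(y,y')<\delta$ gives $d(qy,qy')\le c$ for each of the finitely many $q$, hence $y=y'$). In any case $X=\bigcup_N F_N$ is countable, and a countable metric space is zero-dimensional, since for each $x$ and $\varepsilon>0$ some radius $r<\varepsilon$ is not attained by $d(x,\cdot)$, so $B_r(x)$ is clopen.

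To conclude, I would use that a compact zero-dimensional expansive $G$-system is conjugate to a subshift: choosing a finite clopen partition $\mathcal P$ of $X$ of mesh less than an expansive constant $c$, the coding map $\pi\colon X\to\mathcal P^{\,G}$ sending $x$ to $\big(g\mapsto$ the cell of $\mathcal P$ containing $T_g x\big)$ is continuous (the cells are clopen), $G$-equivariant, and injective (if $\pi(x)=\pi(y)$ then $d(T_gx,T_gy)<c$ for every $g$, so $x=y$), hence a topological conjugacy onto its image, which is a subshift. This subshift is infinite with all orbits finite, contradicting the classical subshift fact, so $X$ must be finite. I expect the main obstacle to be the second paragraph — wringing countability of the sets $F_N$ out of expansiveness, especially for a general acting group; once zero-dimensionality is in hand, the reduction to a subshift is routine.
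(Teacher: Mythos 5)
Your route is genuinely different from the paper's. The paper argues directly: take a sequence of distinct points converging to some $x$, assume $x$ has period $p$, use the expansive constant $\varepsilon_c$ to locate, for each approximating point, the first time at which its approximate $p$-periodicity breaks, and pass to a limit of the shifted points to obtain a point $y$ that is backward asymptotic to a period-$p$ orbit but satisfies $d(y,T^py)\ge\varepsilon_c$, hence is aperiodic. You instead prove the contrapositive: expansiveness together with orbit--stabilizer considerations shows each stratum $F_N$ of points with orbit size at most $N$ is finite (for finitely generated $G$) or at worst countable, so $X$ is countable, hence zero-dimensional, hence conjugate to a subshift by clopen coding, and you then quote the subshift case. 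Your intermediate steps are all correct: $F_N$ is closed, the separation arguments for points sharing a stabilizer and for $\mathrm{Fix}(H)$ are right, countable metric spaces are zero-dimensional, and the clopen coding of a zero-dimensional expansive system is indeed a conjugacy onto a subshift.

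The gap is the final step: what you call ``the classical subshift fact'' is precisely the subshift case of the lemma being proved. For $\Z$-actions it is indeed well known (the paper says as much), so restricted to $\Z$ your argument is an acceptable alternative, though it outsources exactly the combinatorial core that the paper's proof establishes from scratch (and the paper's argument covers subshifts as a special case rather than citing them). But your bookkeeping paragraph explicitly aims at general countable $G$, and at that level the cited statement is not merely non-classical, it is false, so the reduction cannot be closed there by any means. For instance, let $G=\bigoplus_{n\ge1}\Z/2\Z$, let $H_n\le G$ be the kernel of the $n$-th coordinate map, and let $X\subset\{0,1\}^G$ consist of the two constant configurations together with all indicators $\one_{H_n}$ and $\one_{G\setminus H_n}$; as $n\to\infty$ these indicators converge to the all-ones and all-zeros configurations respectively, and these are the only accumulation points, so $X$ is a closed, shift-invariant, infinite set, hence an infinite expansive $G$-system, in which every orbit has at most two elements. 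Thus your proof is complete only for acting groups (such as $\Z$) where the subshift case is independently known, and even there only modulo that known case, whereas the paper's direct construction of an aperiodic point proves the statement outright.
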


\begin{proof}
Choose a sequence of distinct elements in $X$
and pass to a convergent subsequence $x_{i} \to x$. If $x$ is aperiodic, we are done and so suppose that $x$ has period $p$.
Since the system is expansive, there are only a finite number of points of period $p$,
and so by removing finitely many terms of the sequence $x_{i}$, we can assume that none of the $x_{i}$ are of period $p$. Let $\varepsilon_{c}$ be an expansivity constant for $(X,T)$. Then for each $i\in\N$, there exists $j_{i}\in\N$ such that $d(T^{j_{i}}(x_{i}),T^{j_{i}+p}(x_{i})) > \varepsilon_{c}$ (if instead for all $k\in\N$ we have that  $d(T^{k}(x_{i}),T^{k+p}(x_{i})) \le \varepsilon_{c}$, then $x_{i} = T^{p}(x_{i})$ contradicting that $x_{i}$ does not have period $p$). Passing to a subsequence if necessary, we can assume that all of the $j_{i}$ are either positive or negative; let us assume they are positive, as the negative case is analogous. Choose for each $i$ the minimal such $j_{i}$ which satisfies the conditions. Then $d(T^{k}(x_{i}),T^{k+p}(x_{i})) \le \varepsilon_{c}$ for all $0 \le k < j_{i}$. Since the sequence $x_{i}$ consists of distinct element and converges to $x$, the $j_{i}$'s must be unbounded. Choosing a limit point $y$ of the sequence $T^{j_{i}}(x_{i})$, it follows that $y$ is backward asymptotic to a point of period $p$. If the point $y$ is periodic, then its period would necessarily be $p$, but by construction, $d(y,T^{p}(y) \ge \varepsilon_{c}$.  Thus the point $y$ is not periodic.
\end{proof}

\begin{proposition}
\label{prop:weak-to-strong}
If $(X,T)$ is CAM, then it is weakly CAM. Moreover, if $(X,T)$ is expansive and weakly CAM, then it is  CAM.
\end{proposition}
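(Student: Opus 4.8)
The plan is to prove the two implications separately. I would first note that conditions (1) and (2) in the definition of a CAM system coincide with conditions (1) and (2) in the definition of a weakly CAM system, so in each direction only the remaining condition requires an argument.

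For the implication that a CAM system is weakly CAM, I would fix an arbitrary $x \in X$ and consider its orbit $\{T_g x : g \in G\}$. If the orbit is finite there is nothing to prove, so assume it is infinite. Then its closure $\overline{\{T_g x : g \in G\}}$ is a closed, $T$-invariant subset of $X$, and it is infinite since it contains an infinite set. By condition (3) of Definition~\ref{def:CAM} it cannot be a proper subset, so it equals $X$; that is, the orbit of $x$ is dense. This gives the third condition for $(X,T)$ to be weakly CAM.

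For the converse, suppose $(X,T)$ is expansive and weakly CAM, and argue by contradiction: let $Y \subseteq X$ be a proper, closed, $T$-invariant subset and suppose $Y$ is infinite. Then $Y$ is compact (being closed in $X$) and the subsystem $(Y, T|_Y)$ is expansive, since any expansivity constant for $(X,T)$ is also one for the restriction. By Lemma~\ref{lemma:expansive}, the infinite expansive system $(Y, T|_Y)$ contains an aperiodic point $y$, and the orbit of $y$ is therefore infinite. Applying the weakly CAM dichotomy to the point $y \in X$, the orbit of $y$ must be dense in $X$. But that orbit is contained in the closed set $Y$, so $X = \overline{\{T_g y : g \in G\}} \subseteq Y$, contradicting the properness of $Y$. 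Hence every proper closed $T$-invariant subset of $X$ is finite, which is the third condition for $(X,T)$ to be CAM.

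I do not expect a genuine obstacle here; the one step to handle with care is the use of Lemma~\ref{lemma:expansive}, where I must check that the subsystem carried by $Y$ satisfies the hypotheses of the lemma — infinite by the contradiction assumption and expansive by inheritance from $(X,T)$ — and then use that an aperiodic point automatically has infinite orbit, so that the weakly CAM dichotomy can only be fulfilled by the orbit being dense.
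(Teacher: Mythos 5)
Your proof is correct and follows essentially the same route as the paper: the first implication via the orbit-closure argument, and the second by restricting to an infinite closed invariant set $Y$, noting expansivity passes to the subsystem, invoking Lemma~\ref{lemma:expansive} to produce an aperiodic point, and using the weakly CAM dichotomy to force $Y = X$. The only cosmetic difference is that you phrase the second part as a contradiction with properness, while the paper directly concludes $Y = X$; the substance is identical.
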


\begin{proof}
If $(X,T)$ is CAM and $x \in X$ has infinite orbit, then the orbit closure of $x$ is an infinite closed $T$-invariant subset, and hence is all of $X$.

For the second statement,  suppose $(X, T)$ is expansive and weakly CAM.
Let $Y\subset X$ be an infinite closed $T$-invariant subset. Then $(Y,T|_{Y})$ is an infinite expansive system,
and so by Lemma~\ref{lemma:expansive}, $Y$ contains an aperiodic point. Since $(X,T)$ is weakly CAM, that point has dense orbit under $T$, and hence $Y = X$.
\end{proof}

This proposition motivates the following example showing that the two notions are distinct (a related issue is raised in  Question~\ref{question:expansive}).
\begin{example}{(A weakly CAM but not  CAM system)}
\label{ex:weak-not-strong}
Let $(X,T)$ be a CAM $\mathbb{Z}$-subshift and assume that $(X,T)$ has a fixed point $p$.
(The existence of a $\Z$-CAM system is shown in Section~\ref{sec:Zcamsystem}, and this system has two fixed points.)

We start by defining the space for our new action. Choose a sequence of proper clopen subsets $A_{n}$ in $X$ such that for every $n \in\N$, both of the following are satisfied:
\begin{enumerate}
\item
$A_{n}$ contains $p$.
\item
$A_{n+1}$ is a proper subset of $A_{n}$.
\end{enumerate}
For $n \in \mathbb{N}$, let $\mathcal{U}_{n} = \{\frac{k}{2^{n}} :  0 \le k \le 2^{n}-1\}$ denote the additive group of $2^{n}$-th roots of unity in $S^{1} = \mathbb{R} / \mathbb{Z}$. Let $Z = X \times S^{1}$, and consider the subset $Y$ of $Z$ defined as the set of points $(x,t) \in Z$ such that:

\begin{enumerate}[resume]
\item
If $x \not \in A_1$,  we require that $t=0$.
\item
If $x \in A_1$ and  $n$ is chosen such that $x \in A_{n}$ but $x \not \in A_{n+1}$,  we require $t \in \mathcal{U}_{n+1}$.
\item If $x=p$, there is no constraint on $t$. \end{enumerate}
Note that any point of the form $(p,t)$ is a limit of points of the form $(x,t)$ with  $x \neq p$.

The map $f \colon Y \to Y$ is defined as follows.
\begin{enumerate}[resume]
\item
If $x \not \in A_1$, then $f(x,0) = (T(x),0)$.
\item
\label{item:fixed}
For any $t \in S^{1}$, $f(p,t) = (p,t)$.
\item
Suppose $x \in A_{0}$ and let $n$ be such that $x \in A_{n}$ but $x \not \in A_{n+1}$. Consider $(x,t)$ where $t = \frac{k}{2^{n+1}}  \in \mathcal{U}_{n+1}$. We then define
\[
 f(x,t) =
\begin{cases}
(x,t+\frac{1}{2^{n+1}}) & \text{ for } t \ne \frac{2^{n+1}-1}{2^{n+1}} \\
    (T(x),0) & \text{ for }
    t = \frac{2^{n+1}-1}{2^{n+1}}.
\end{cases}
\]
%
In other words, points of the form $(x,t)$ where $x \in A_{n}$ (and $n$ is maximal such) rotate through the $2^{n+1}$-st roots of unity in the $S^{1}$ term, and at the end of that excursion, map back to $X$ via $T$.
\end{enumerate}

We claim that the map $f \colon Y \to Y$ is a homeomorphism
and the system $(Y,f)$ is weakly CAM but not CAM.

To check that $f$ is a homeomorphism, we consider the various cases. If $(x,t) \in Y$ and $x \ne p$, then continuity of $f$ at $(x,t)$ is immediate.  For a point $(p,t)$ with $t\neq 0$, consider a sequence $(x_{n},t_{n}) \to (p,t)$ as $n\to\infty$. Consider $\{m(n) : x_{n} \in A_{m(n)} \textnormal{ and } m(n) \textnormal{ is maximal among these choices}\}$ (note that by the nesting property, this set is finite).
By construction, $m(n) \to \infty$ as $n \to \infty$, and since $t \ne 0$, the sequence $t_{n}$ is bounded away from 0. Thus $f(x_{n},t_{n}) = (x_{n},t_{n}+ \frac{1}{2^{m(n)}+1}) \to (p,t)$ as $n\to\infty$ and continuity of $f$ at $(p,t)$ follows.   Continuity at $(p,0)$ is similar.

We now consider the system $(Y,f)$.
Since the set $\{(p,t) : t \in S^{1}\}$ is proper, infinite, closed, and $f$-invariant, this system is not CAM.
To check that $(Y,f)$ is weakly CAM, note that if $x$ is periodic then $(x,t)$ is also periodic. Since $X$ is CAM, periodic points are dense in $X$, and so the same  holds for $(Y,f)$. If $x\in X$ is not periodic then since $(X,T)$ is weakly CAM, the orbit of $x$ is dense in $X$.  It follows immediately that the orbit of $(x,0)$ is also  dense in $Y$.
\end{example}

This example is useful in that it shows two things: that a weakly CAM $\mathbb{Z}$-system need not be expansive, and that it need not be totally disconnected. A slight variation of the example can be used to produce a weakly CAM $\mathbb{Z}$-system which is totally disconnected but not expansive.

\section{Groups that admit CAM actions}

\subsection{Residually finite}
\label{sec:residually-finite}
Recall that a group $G$ is {\em residually finite} if for every element $g\in G$ that is not the identity, there is a group homomorphism  from $G$ to a finite group such that $g$ is not mapped to the identity.

\begin{proposition}
\label{prop:residually}
If $G$ is a CAM group, then $G$ is residually finite.
\end{proposition}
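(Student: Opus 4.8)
The plan is to use a point with finite orbit to build, for each nontrivial $g \in G$, an explicit homomorphism from $G$ onto a finite symmetric group that does not kill $g$. Fix a CAM system $(X,T)$ witnessing that $G$ is CAM, and let $g \in G$ with $g \neq e$ be arbitrary. First I would locate a finite-orbit point that $g$ actually moves. Since the action is faithful, $T_g$ is not the identity homeomorphism of $X$, so the set $U = \{x \in X : T_g x \neq x\}$ is nonempty; it is open because $T_g$ is continuous and $X$ is Hausdorff. The points of $X$ with finite $G$-orbit are dense, so I may choose $x_0 \in U$ with $\mathcal{O} := \{T_a x_0 : a \in G\}$ finite and $T_g x_0 \neq x_0$.

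Next I would exploit the permutation action of $G$ on the finite set $\mathcal{O}$. Since each $T_a$ is a homeomorphism of $X$ carrying $\mathcal{O}$ onto $\mathcal{O}$, the assignment $a \mapsto T_a|_{\mathcal{O}}$ is a group homomorphism $\rho \colon G \to \sym(\mathcal{O})$ into a finite symmetric group, and $\rho(g)$ sends $x_0$ to $T_g x_0 \neq x_0$, so $\rho(g) \neq \id$. Thus $\rho$ is a homomorphism from $G$ to a finite group whose kernel does not contain $g$. As $g$ ranged over all nontrivial elements of $G$, this is exactly the statement that $G$ is residually finite. (Equivalently, one could note that $\mathrm{Stab}_G(x_0)$ has finite index equal to $|\mathcal{O}|$, pass to its normal core, and take the finite quotient $G/\mathrm{core}$; the permutation description simply makes the map concrete.)

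I do not expect a genuine obstacle here: the only subtlety is recognizing which of the CAM hypotheses are in play. Faithfulness is used precisely to guarantee $U \neq \emptyset$, and density of finite-orbit points is used to place such a point inside $U$; transitivity and the restriction on proper closed invariant subsets are not needed, and compactness and metrizability of $X$ enter only through the standing definitions. (I would phrase the argument only for groups, as in the statement, since for a semigroup action the restricted maps $T_a|_{\mathcal{O}}$ need not be bijections and $\rho$ need not land in a group.)
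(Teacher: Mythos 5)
Your argument is correct and is essentially the same as the paper's: both use faithfulness to find an open set of points moved by $T_g$, density of finite-orbit points to place such a point there, and then the induced permutation action of $G$ on that finite orbit to produce a finite quotient in which $g$ survives. The extra details you supply (openness of $U$, the explicit map to $\sym(\mathcal{O})$) are just a fuller writing-out of the same proof.
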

\begin{proof}
Assume that the $G$-system $(X, T)$ is CAM and suppose that $g \in G$ is not the identity. Since $G$ acts faithfully and the periodic points are dense, there is a periodic point $p$ such that $T_{g}p \ne p$. It follows that $G$ acts on the finite orbit of $p$ and $g$ acts nontrivially on this orbit.
\end{proof}

We note that this proof only relies on the faithfulness of the action and the existence of a dense set of periodic points, and not the other properties of a CAM group. However, this clarifies that many types of groups, such as  the additive group of rationals  or any infinite simple group, are not CAM. However, we do not know if residual finiteness is the only restriction (see Conjecture~\ref{conj:residual}).

\subsection{Finite index subactions}

The main result in this section is showing that a finite index subgroup of a CAM group is also CAM.  To show this, we start with a lemma on how orbits act under the action of a CAM system.

\begin{lemma}\label{lemma:lemmaforfinindex}
Suppose $(X,T)$ is a $G$-CAM system and $H$ is finite index normal subgroup of $G$. Let $\{g_{1}H,g_{2}H,\ldots,g_{m}H\}$ denote the set of left cosets of $H$ in $G$.  If $Y \subseteq X$ is a compact $H$-invariant subspace containing an infinite $H$-orbit, then $\bigcup_{i=1}^{m}g_{i}(Y) = X$.  Furthermore, the  $H$-periodic points are dense in $Y$.
\end{lemma}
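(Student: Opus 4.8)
The plan is to exploit the fact that $G$-invariance of a set can be broken into the action of the finitely many coset representatives on an $H$-invariant set, and then use the CAM property of $(X,T)$ as a $G$-system.

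First I would consider the set $Z = \bigcup_{i=1}^{m} g_{i}(Y)$. Since $Y$ is compact and each $g_i$ acts by a homeomorphism, $Z$ is a finite union of compact sets and hence compact. Next I would check that $Z$ is $G$-invariant: given $g \in G$ and $i \in \{1,\ldots,m\}$, write $g g_{i} = g_{j} h$ for some $j$ and some $h \in H$ (using that $H$ is normal of index $m$, so $\{g_iH\}$ is also the set of right cosets up to relabeling, or more directly that $gg_i$ lies in some coset $g_jH$). Then $g(g_i(Y)) = g_j(h(Y)) = g_j(Y)$ since $Y$ is $H$-invariant. Hence $g(Z) = Z$. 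Now $Z$ is a nonempty compact $G$-invariant subset of $X$, and it contains $Y$, which contains an infinite $H$-orbit, so $Z$ is infinite. Since $(X,T)$ is $G$-CAM, the only compact $G$-invariant subsets that are infinite are all of $X$; therefore $Z = X$, which is the first claim.

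For the second claim, that $H$-periodic points are dense in $Y$, I would argue as follows. Since $(X,T)$ is $G$-CAM, the $G$-periodic points are dense in $X$. If $x \in X$ is $G$-periodic, then its $G$-orbit is finite, so the $H$-orbit of $x$ (a subset of the $G$-orbit) is finite, i.e.\ $x$ is $H$-periodic. Thus the $H$-periodic points are dense in $X$. Now I want density in $Y$. Let $U \subseteq Y$ be a nonempty relatively open subset; write $U = V \cap Y$ for some open $V \subseteq X$. The issue is that $Y$ may have empty interior in $X$, so density of $H$-periodic points in $X$ does not immediately give a point inside $Y$. To handle this, I would use the first part of the lemma together with a Baire-category-type argument: since $X = \bigcup_{i=1}^m g_i(Y)$ is a finite union of closed sets, at least one $g_{i_0}(Y)$ has nonempty interior in $X$, hence (applying $g_{i_0}^{-1}$, a homeomorphism) $Y$ itself has nonempty interior. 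Then for any nonempty relatively open $U \subseteq Y$, transitivity of the $G$-action together with the fact that $Y$ contains an infinite $H$-orbit lets one move a piece of the interior of $Y$ into $U$ while preserving $H$-periodicity — more carefully, one takes the dense set of $H$-periodic points of $X$ intersected with $\interior{Y}$ (nonempty and dense in $\interior{Y}$), and then pushes it around $Y$ using the $H$-action, using minimality-type behavior of the infinite $H$-orbit to reach all of $Y$.

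The main obstacle I anticipate is precisely this last point: bridging from ``$H$-periodic points are dense in $X$'' to ``$H$-periodic points are dense in the possibly-nowhere-dense subspace $Y$.'' The clean resolution is the observation that $X$ is a \emph{finite} union of homeomorphic copies $g_i(Y)$ of $Y$, which by Baire forces $Y$ to have nonempty interior; combined with the presence of an infinite (hence, one expects, dense-in-$Y$) $H$-orbit and the transitivity of the ambient $G$-action, this should let us spread the $H$-periodic points throughout $Y$. Everything else — compactness, $G$-invariance of $Z$, and the descent of periodicity from $G$ to $H$ — is routine.
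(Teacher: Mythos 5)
Your first half is fine: showing that $Z=\bigcup_{i=1}^{m}g_{i}(Y)$ is compact, $G$-invariant, and infinite, and then invoking the CAM property to get $Z=X$, is correct (and is in fact a slightly more direct route than the paper's, which instead uses density of the $G$-orbit of a point of $Y$ with infinite orbit and a pigeonhole argument over the cosets). The genuine gap is in the second half, exactly at the point you flagged. Your Baire observation only gives $\mathrm{Int}(Y)\neq\emptyset$ and hence density of $H$-periodic points in $\mathrm{Int}(Y)$; to reach the rest of $Y$ you propose to ``push'' these points around using the $H$-action and a ``minimality-type'' density of the infinite $H$-orbit in $Y$. But no such density is available: $Y$ is an arbitrary compact $H$-invariant set containing an infinite $H$-orbit, and the closure of that orbit can be a proper subset of $Y$ (nothing in the hypotheses makes the $H$-action on $Y$ transitive or minimal), so the spreading mechanism does not reach all of $Y$. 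The alternative mechanism you mention, topological transitivity of the $G$-action, fails for a different reason: moving an $H$-periodic point $p\in\mathrm{Int}(Y)$ by a general $g\in G$ does preserve $H$-periodicity (by normality), but $gp$ lands in $g(Y)$, not in $Y$, whereas the statement requires periodic points \emph{belonging to} $Y$ to be dense in $Y$.

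The missing idea is to control $Y\setminus\mathrm{Int}(Y)$ directly rather than trying to propagate into it. The paper sets $B=\bigcup_{i=1}^{m}g_{i}(\partial Y)$ and shows $B$ is closed and $G$-invariant (this is where normality of $H$ is genuinely used, to rewrite $g g_{i}h'$ as $g_{\ell}h_{\ell}$); since $\partial Y$ is nowhere dense, $B$ is a finite union of nowhere dense sets and hence a proper subset of $X$, so by the CAM property $B$ is finite and consists of $G$-periodic points. Consequently $\partial Y$ itself is finite and consists of $G$-periodic (hence $H$-periodic) points, while $H$-periodic points are dense in $\mathrm{Int}(Y)$ because $G$-periodic points are dense in $X$; writing $Y=\mathrm{Int}(Y)\cup\partial Y$ finishes the proof. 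Your argument as written does not supply any substitute for this step, so as it stands the second assertion of the lemma is not proved.
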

\begin{proof}
We first prove that $\bigcup_{i=1}^{m}g_{i}(Y) = X$. Choose $y \in Y$ which has an infinite $H$-orbit. Then the $G$-orbit of $y$ is infinite, and since the action of $G$ on $X$ is CAM, it follows that the orbit of $y$ is dense in $X$. Let $x \in X$
and choose a sequence  of elements $k_n\in G$ such that $k_n\cdot y \to x$ as $n\to\infty$.
For each $n\in\N$,  there exists $1 \le i(n) \le m$ and
$h_n \in H$ such that $k_n = g_{i(n)}h_n$.  By passing to a subsequence if necessary, we can assume that $i(n) = i$ for all $n\in\N$. It follows that  $g_{i}h_n \cdot y \to x$ as $n\to\infty$, and $h_n \cdot y \to  g_{i}^{-1}(x)$ as  $n\to\infty$.
Since $Y$ is closed and $H$-invariant we have that $g_{i}^{-1}(x) \in Y$, and so $x \in g_{i}(Y)$ for some $i\in\{1, \ldots, m\}$, proving the first statement.

We next show that the boundary $\partial Y$ is a finite set consisting of $G$-periodic points (note that $\partial Y$ is not necessarily $G$-invariant).
To check this, let $B = \bigcup_{i=1}^{m}g_{i}(\partial Y)$.
The set $B$ is closed, and we claim it is $G$-invariant.
Indeed, given $g \in G$, write $g = g_{j}h$ for some $j$ and $h \in H$. Then
\begin{multline*}
g(B) = g\bigl(\bigcup_{i=1}^{m}g_{i}(\partial Y)\bigr) = \bigcup_{i=1}^{m}gg_{i}(\partial Y) =
\bigcup_{i=1}^{m}g_{j}h g_{i}(\partial Y) \\ = \bigcup_{i=1}^{m} g_{j}g_{i} h^{\prime} (\partial Y)
= \bigcup_{i=1}^{m} g_{\ell(i)}h_{\ell(i)}(\partial Y) = \bigcup_{i=1}^{m}g_{i}(\partial Y) = B,
\end{multline*}
where the third to last equality follows from normality of $H$ in $G$ and the second to last from the fact that for each $i$ there exists a unique $\ell(i)$ such that $g_{j}g_{i} = g_{\ell(i)}h_{\ell(i)}$ for some $h_{\ell(i)} \in H$.
Since $Y$ is closed, $\partial Y$ is nowhere dense, and hence so is  $g_{i}(\partial Y)$ for $i=1, \dots, m$.
Thus $B$ is a proper subset of $X$. Therefore,   $B$ is a $G$-invariant subspace of $X$ and since the action of $G$ on $X$ is CAM, it follows that $B$ is finite and only consists of $G$-periodic points. Since $\partial Y \subset B$, it follows that the same holds for $\partial Y$.

Since $G$-periodic points are dense in $X$, it follows that $H$-periodic points are dense in the interior $\textrm{Int}(Y)$ of $Y$. Since $Y = \textrm{Int}(Y) \cup \partial Y$ and $\partial Y$ consists of $H$-periodic points, the second statement follows.
\end{proof}

\begin{theorem}\label{thm:finiteindex}
Any finite index subgroup of a CAM group is  CAM.
\end{theorem}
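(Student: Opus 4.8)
The plan is to reduce to the normal case, apply Lemma~\ref{lemma:lemmaforfinindex} to extract a candidate CAM subsystem, and then verify the three axioms of Definition~\ref{def:CAM} for the restricted action. Let $(X,T)$ be a $G$-CAM system and let $H \le G$ be a subgroup of finite index. First I would replace $H$ by a finite index normal subgroup $H' \trianglelefteq G$ with $H' \le H$ (the normal core, which is finite index since $G/H'$ embeds in the finite symmetric group on the cosets of $H$); since CAM is closed under passing from a group to an overgroup only in the trivial direction, I should instead argue that if $H'$ is CAM then so is $H$ — but $H' \le H \le G$ and $H'$ has finite index in $H$, so it suffices to prove the theorem for $H$ normal in $G$, provided we also know the statement descends. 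Cleaner: prove directly that $H$ normal of finite index in $G$ implies $H$ is CAM, and separately observe that for the general case, $H$ contains the normal core $H'$ as a finite index subgroup, $H'$ is CAM by the normal case applied to $H' \trianglelefteq G$, and then $H$ is CAM by the normal case applied to $H' \trianglelefteq H$. So the entire content is: \emph{a finite index normal subgroup of a CAM group is CAM.}

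Now assume $H \trianglelefteq G$ with cosets $g_1 H,\dots,g_m H$. The $G$-action $T$ restricts to an $H$-action on $X$, but this restricted system need not be transitive, so I would \emph{choose} the right subsystem. Let $Y \subseteq X$ be a minimal element (under inclusion) of the collection of nonempty compact $H$-invariant subsets of $X$ that contain an infinite $H$-orbit — such $Y$ exists since $X$ itself is such a set (any transitive point of the $G$-action has infinite $G$-orbit, hence infinite $H$-orbit since $[G:H]<\infty$), and one can take an intersection over a chain, using that in an infinite $H$-invariant closed set, Lemma~\ref{lemma:lemmaforfinindex} combined with CAM of $G$ forces the set to be uncountable, so the "infinite $H$-orbit" property is preserved under the relevant intersections; alternatively just take $Y$ to be the $H$-orbit closure of a single point $y$ with infinite $H$-orbit and then shrink. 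By Lemma~\ref{lemma:lemmaforfinindex}, $\bigcup_{i=1}^m g_i(Y) = X$ and the $H$-periodic points are dense in $Y$. I claim $(Y, T|_H)$ is a CAM $H$-system: (1) \textbf{Faithfulness}: if $h \in H$ acts trivially on $Y$, then since $X = \bigcup g_i(Y)$ and each $g_i$ conjugates $H$ into $H$ (normality), $g_i^{-1} h g_i$ also acts trivially on $Y$, hence $h$ acts trivially on $g_i(Y)$ for every $i$, hence on all of $X$; faithfulness of $T$ on $X$ gives $h = e$. (2) \textbf{Density of periodic points}: immediate from Lemma~\ref{lemma:lemmaforfinindex}. (3) \textbf{Transitivity and the invariant-set condition together}: let $Z \subseteq Y$ be a closed $H$-invariant subset. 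Then $\bigcup_{i=1}^m g_i(Z)$ is closed and, by the normality computation exactly as in the proof of Lemma~\ref{lemma:lemmaforfinindex}, $G$-invariant; since $G$ is CAM, either $\bigcup g_i(Z) = X$ or it is finite. If it is finite then $Z$ is finite. If $\bigcup g_i(Z) = X$ then $Z$ contains a point with infinite $H$-orbit, so $Z$ is an $H$-invariant closed subset of $Y$ containing an infinite $H$-orbit, and minimality of $Y$ forces $Z = Y$. Thus every proper closed $H$-invariant subset of $Y$ is finite, which in particular gives point-transitivity (the orbit closure of any infinite-orbit point is all of $Y$), hence topological transitivity.

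The step I expect to be the main obstacle is the careful handling of the choice of $Y$: one must ensure simultaneously that $Y$ is \emph{small enough} that $Z = Y$ is forced whenever $\bigcup g_i(Z) = X$ (which is why I want $Y$ minimal among $H$-invariant closed sets with an infinite $H$-orbit), yet \emph{large enough} that Lemma~\ref{lemma:lemmaforfinindex} applies (it needs an infinite $H$-orbit inside $Y$). Verifying that a minimal such $Y$ exists requires a Zorn's-lemma argument where the subtlety is that a decreasing intersection of closed sets each containing an infinite $H$-orbit might conceivably end up with all orbits finite; here I would use that by Lemma~\ref{lemma:lemmaforfinindex} applied to each member of the chain, each such set maps onto $X$ under $\bigcup g_i(\cdot)$ and is uncountable (it has dense $H$-periodic points but cannot be a countable union of finite orbits if it is to be infinite — more precisely, using Proposition~\ref{prop:FurstUnctble}-type reasoning on the $H$-action restricted appropriately), and a nested intersection of such sets still surjects onto $X$ under $\bigcup g_i(\cdot)$ by compactness, hence still contains an infinite $H$-orbit. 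Once $Y$ is in hand, the three verifications above are routine, and the reduction from the general finite-index case to the normal case via the normal core is standard.
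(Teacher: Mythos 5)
Your handling of the normal case is essentially the paper's own argument: you take a minimal element $Y$ of the family of compact $H$-invariant sets containing an infinite $H$-orbit (Zorn's Lemma, with the chain-intersection step settled by the same pigeonhole argument over the finitely many cosets that the paper uses, via Lemma~\ref{lemma:lemmaforfinindex}), you get density of $H$-periodic points from Lemma~\ref{lemma:lemmaforfinindex}, and you get the invariant-subset condition from the dichotomy for the $G$-invariant set $\bigcup_{i=1}^m g_i(Z)$ together with minimality of $Y$. That portion is sound and parallels the paper.

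The genuine gap is your reduction of the general case to the normal case. You conclude with: the core $H'$ is CAM by the normal case applied to $H'\trianglelefteq G$, ``and then $H$ is CAM by the normal case applied to $H'\trianglelefteq H$.'' But the normal case only goes downward: it says a finite-index \emph{normal subgroup} of a CAM group is CAM, so applying it to the pair $H'\trianglelefteq H$ presupposes that $H$ is CAM, which is exactly what you are trying to prove; it yields nothing about $H$. Nor is there an abstract going-up principle to fall back on: knowing that $H'$ admits some faithful CAM action does not produce a faithful CAM action of $H$ (the natural induced/co-induced action lives on a product of copies of the $H'$-space, and product-type systems generally violate the invariant-subset condition; the paper notes CAM systems are not closed under products). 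The correct going-up step must be carried out \emph{inside the ambient $G$-space}, and this is precisely what the paper does: writing $K\subseteq H$ for a finite-index subgroup normal in $G$, it first finds a compact $K$-invariant $Z\subset X$ with $(Z,K)$ CAM, then sets $Z'=\bigcup_{i=1}^m\alpha_i(Z)$ with $\alpha_i$ coset representatives of $K$ in $H$, and verifies directly that $(Z',H)$ is CAM: $H$-periodic points are dense since $K$-periodic points are dense in $Z$; and if $A\subset Z'$ is a proper compact $H$-invariant set, then $A\cap\alpha_j(Z)$ is nonempty and $K$-invariant for some $j$ (here normality of $K$ in $G$ is used), so $\alpha_j^{-1}(A\cap\alpha_j(Z))$ is either finite or all of $Z$, and the second option contradicts properness via $H$-invariance of $A$. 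That verification is the piece missing from your proposal, and it is not a formality. (A secondary point: your faithfulness check asserts that if $h$ acts trivially on $Y$ then $g_i^{-1}hg_i$ acts trivially on $Y$; this does not follow, since the kernel of the $H$-action on $Y$ is normal in $H$ but need not be normalized by the $g_i$, so that argument as written does not go through.)
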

\begin{proof}
Suppose $(X,G)$ is CAM and let $H$ be a finite index subgroup of $G$.
We first show that we can reduce to the case that $H$ is normal in $G$. Since $H$ has  finite index, there exists a subgroup $K \subseteq H$ such that $K$ is finite index and normal in $G$.
Suppose there exists a compact $K$-invariant subspace $Z \subset X$ such that the system $(Z,K)$ is CAM.
Let $\{\alpha_{i}\}_{i=1}^{m}$ be a set of left coset representatives of $K$ in $H$.
Setting $Z^{\prime} = \bigcup_{i=1}^{m}\alpha_{i}(Z)$, we have that $Z^{\prime}\subseteq X$ is $H$-invariant and compact.
We claim that the system $(Z^{\prime},H)$ is CAM. It is easy to check that periodic points (with respect to the $H$-action) are dense in $Z^{\prime}$, since $K$-periodic points are dense in $Z$ and each $\alpha_{i}$ takes a $K$-periodic point in $Z$ to a $K$-periodic point (and hence $H$-periodic point) in $\alpha_{i}(Z)$.

Now suppose $A$ is a nonempty proper compact $H$-invariant subset of $Z^{\prime}$. It follows that  $A^{\prime} = A \cap \alpha_{j}(Z) \ne \emptyset$ for some $j\in\{1, \dots, m\}$.
Furthermore, $A^{\prime}$ is $K$-invariant, since $K$ is normal in $G$ and both $A$ and $\alpha_{j}(Z)$ are $K$-invariant.
Since $K$ is normal, this  implies that $\alpha_{j}^{-1}(A^{\prime}) \subset Z$ is also $K$-invariant. Since $(Z,K)$ is CAM, we must have $\alpha_{j}^{-1}(A^{\prime}) = Z$, and so $\alpha_{j}(Z) \subset A$. But $A$ is $H$-invariant and so $Z^{\prime} \subset A$, and it follows that the system $(Z', H)$ is CAM.

We can thus assume that $H$  is a finite index,  normal subgroup in $G$.
Let $\mathcal{C}_{H}$ denote the set of all compact $H$-invariant subsets of $X$ which contain some point with an infinite $H$-orbit.
The set $\mathcal{C}_{H}$ is partially ordered by setting $Y_{1} \preceq Y_{2}$ if $Y_{1} \subset Y_{2}$. Suppose $\{Y_{j}\}_{j \in J}$ is a totally ordered collection of such subsets and let $\{g_{1},\ldots,g_{m}\}$ be a set of left coset representatives for $H$ in $G$.

Set $Y_{\infty} = \bigcap_{j \in J}Y_{j}$.  We claim that $Y_{\infty}$ contains a point with infinite $H$-orbit.
Since $H$ has finite index in $G$, it suffices to show that $Y_{\infty}$ contains a point with infinite $G$-orbit.  In fact we show the stronger statement that for any $x \in X$  with infinite $G$-orbit, we have that $Y_{\infty}$  contains at least one point from the set $\{g_{1}^{-1}x,\ldots,g_{m}^{-1}x\}$; since each of these points has infinite $G$-orbit,  the claim follows.
Arguing by contradiction, if this does not hold, then for each $1 \le i \le m$, there exists $j_{i}$ such that $g_{i}^{-1}(x) \not \in Y_{j_{i}}$.
We set $J = \max_{1 \le i \le m}\{j_{i}\}$, and note that $J$ is finite since any finite subset of a totally ordered set has a maximum.
Then $g_{i}^{-1}(x) \not \in Y_{J}$ for all $i\in\{1, \dots, m\}$, since $Y_{J} \subset Y_{j_{i}}$ for all $i\in\{1, \dots, m\}$. This then implies  that $x \not \in g_{i}Y_{J}$ for all $i\in\{1, \dots, m\}$.
However, $Y_{J}$ belongs to $\mathcal{C}_{H}$ and hence contains some point with  infinite $H$-orbit.  Thus by Lemma~\ref{lemma:lemmaforfinindex}, it follows that  $\bigcup_{i=1}^{m}g_{i}(Y_{j}) = X$, a contradiction.
In particular, $Y_{\infty}$ contains a point with infinite $H$-orbit.

Thus we have that $Y_{\infty}$ belongs to $\mathcal{C}_{H}$ and is an upper bound for the collection $\{Y_{j}\}_{j \in J}$. By Zorn's Lemma, there is a maximal element in $Z \in \mathcal{C}_{H}$, and this element $Z$ is a compact $H$-invariant subset  of $X$ containing some point with an infinite $H$-orbit.
We claim the action of $H$ on $Z$ is a CAM $H$-system. If $x \in Z$ has infinite $H$-orbit, then that orbit is dense, for otherwise its orbit closure would be a proper compact $H$-invariant subspace of $Z$ containing an infinite $H$-orbit, contradicting the maximality of $Z$.
Thus if $A$ is a proper compact $H$-invariant subset of $Z$, then $A$  consists only of $H$-periodic points. Consider $A^{\prime} = \bigcup_{i=1}^{m}\alpha_{i}(A) \subset X$. Then $A^{\prime}$ is compact, nonempty $G$-invariant set in $X$, and it is proper since it consists only of $G$-periodic points. Since $(X,G)$ is CAM it follows that $A^{\prime}$ is finite and so $A$ is also finite.

Finally, by Lemma~\ref{lemma:lemmaforfinindex}, the $H$-periodic points are dense in $Z$.
\end{proof}

We note the following corollary.
\begin{corollary}
\label{cor:extend-action}
Suppose $H$ is group acting by a CAM action on $X$ and that $H$ is a finite index subgroup of a group $G$. If there is an action of $G$ on $X$ extending the action of $H$, then the $G$ action is also CAM.
\end{corollary}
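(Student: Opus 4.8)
The plan is to check directly that the extended $G$-action on $X$ satisfies the three conditions of Definition~\ref{def:CAM}, exploiting that they already hold for the subaction $(X,H)$; unlike Theorem~\ref{thm:finiteindex}, passing \emph{up} to a finite-index overgroup is elementary and no maximality argument is needed. Two of the conditions transfer with no work. Topological transitivity of $(X,G)$ is inherited from $(X,H)$, since any $h\in H$ with $T_hU\cap V\neq\emptyset$ also lies in $G$. For the third condition, if $A\subsetneq X$ is a closed $G$-invariant set, then $A$ is in particular a proper closed $H$-invariant set, hence finite because $(X,H)$ is CAM. The one part of Definition~\ref{def:CAM}(1) not handled by this is faithfulness of the $T$-action of $G$: it does not follow formally from faithfulness of the $H$-action (one could enlarge $H$ by a direct factor acting trivially on $X$), so here I would either include it as a hypothesis or verify it directly in the case at hand; in the applications we have in mind the extension is faithful.

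What remains is condition (2), density of the points with finite $G$-orbit, and this is where the finite index of $H$ enters. Fixing coset representatives $g_1,\dots,g_m$ with $G=\bigcup_{i=1}^m g_iH$, I claim any point $x$ with finite $H$-orbit $O=H\cdot x$ has finite $G$-orbit: indeed
\[
G\cdot x \;=\; \bigcup_{i=1}^m T_{g_i}\bigl(H\cdot x\bigr) \;=\; \bigcup_{i=1}^m T_{g_i}(O),
\]
a union of $m$ homeomorphic images of the finite set $O$, so $|G\cdot x|\le m\,|O|<\infty$. Since the set of points with finite $H$-orbit is dense in $X$ by Definition~\ref{def:CAM}(2) for $(X,H)$, and it is contained in the set of points with finite $G$-orbit, the latter is dense as well.

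Assembling these observations yields that $(X,G)$ is CAM. The only step carrying any real content is the orbit computation, whose point is simply that a finite $H$-orbit meets translates from at most $m=[G:H]$ many cosets; the remaining verifications are routine matching against Definition~\ref{def:CAM}. Equivalently, if $(X,G)$ failed to be CAM it could only fail condition (2) or (3) (faithfulness being assumed), and neither can fail by the two arguments above. I expect the mild subtlety around faithfulness, rather than any of the dynamical checks, to be the only point the write-up needs to address carefully.
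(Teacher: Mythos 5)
Your direct verification is correct and is essentially the argument the paper leaves implicit: the corollary is stated without proof as an immediate note, the only real content being exactly what you identify — a finite $H$-orbit spreads over at most $[G:H]$ coset translates, every proper closed $G$-invariant set is already $H$-invariant and hence finite, and transitivity passes up trivially. Your caveat about faithfulness is a fair reading of Definition~\ref{def:CAM}(1) (the paper tacitly takes the extended $G$-action to be faithful, and one could indeed enlarge $H$ by a finite factor acting trivially), but that is a minor imprecision in the statement rather than a gap in your proof.
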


\begin{example}[Free group]
\label{example:free}
Since ${\rm SL}_{2}(\mathbb{Z})$ is CAM (see Example~\ref{example:SLdZ}), Theorem~\ref{thm:finiteindex} implies the free group on two generators is CAM.
 More generally, the free group on $n$ generators for any $n\geq 3$ is a finite index subgroup of the free group on $2$ generators, and so is also CAM.
\end{example}
Another corollary of  Theorem~\ref{thm:finiteindex} gives a sharpening of Corollary~\ref{cor:extend-action}. \begin{corollary}\label{cor:finindex}
Suppose $G$ acts on $X$ by a CAM action and let $H \subset G$ be a finite index normal subgroup. Suppose there exists $x \in X$ whose $H$-orbit is dense in $X$. Then the action of $H$ on $X$ is CAM.
\end{corollary}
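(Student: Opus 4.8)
The plan is to verify the three conditions of Definition~\ref{def:CAM} for the action of $H$ on $X$ directly, using the coset-averaging idea from the proofs of Theorem~\ref{thm:finiteindex} and Lemma~\ref{lemma:lemmaforfinindex}. Conditions (1) and (2) will be almost immediate: the $H$-action is faithful because it is the restriction of the faithful $G$-action, and $(X,H)$ is topologically transitive since $x$ has dense $H$-orbit, so the system is point transitive. For condition (2), any point with finite $G$-orbit also has finite $H$-orbit, so every $G$-periodic point is $H$-periodic; since $(X,G)$ is CAM the $G$-periodic points are dense in $X$, hence so are the $H$-periodic points. (Alternatively, this is exactly the second conclusion of Lemma~\ref{lemma:lemmaforfinindex} applied with $Y = X$.)

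The substance is condition (3): every proper closed $H$-invariant subset $A \subsetneq X$ is finite. We may assume $A \ne \emptyset$. I would fix left coset representatives $g_{1} = e, g_{2}, \ldots, g_{m}$ for $H$ in $G$ and set $A' = \bigcup_{i=1}^{m} g_{i}(A)$, a compact subset of $X$ containing $A$. First one checks that $A'$ is $G$-invariant: writing an arbitrary $g \in G$ as $g = g_{j} h$ with $h \in H$, using normality to rewrite $h g_{i} = g_{i} h_{i}'$ with $h_{i}' \in H$, and using that left multiplication by $g_{j}$ permutes the cosets, one gets $g(A') = A'$ exactly as in the displayed computation in the proof of Lemma~\ref{lemma:lemmaforfinindex} (with $A$ in place of $\partial Y$). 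The crucial point is that $A'$ is \emph{proper}, and this is where the hypothesis on $x$ enters: since $H$ is normal, $H g_{i}^{-1} = g_{i}^{-1} H$, and hence $\overline{H g_{i}^{-1} x} = g_{i}^{-1}\overline{H x} = g_{i}^{-1} X = X$ for each $i$. Thus each translate $g_{i}^{-1} x$ has dense $H$-orbit, so it cannot lie in the proper closed $H$-invariant set $A$; equivalently $x \notin g_{i}(A)$ for all $i$, so $x \notin A'$. Therefore $A'$ is a proper compact $G$-invariant subset of $X$, which is finite because $(X,G)$ is CAM, and so $A \subseteq A'$ is finite as well. Combining this with the verification of (1) and (2) shows $(X,H)$ is CAM.

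I expect the only genuine obstacle to be the properness of $A'$: this is the single place where the density of the $H$-orbit of $x$ is used essentially, and it also relies on normality of $H$ to transport that density to the translates $g_{i}^{-1} x$; the remaining steps are routine bookkeeping. I would also record the small observation that, once (3) is known, \emph{every} infinite $H$-orbit in $X$ is automatically dense, since its closure is an infinite closed $H$-invariant subset and hence all of $X$; thus the hypothesis that some point has dense $H$-orbit is, a posteriori, equivalent to the mere existence of a point of $X$ with infinite $H$-orbit.
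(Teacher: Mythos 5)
Your main argument is correct, and it takes a genuinely different route from the paper. The paper deduces this corollary from Theorem~\ref{thm:finiteindex}: it invokes the Zorn's lemma construction there to produce a maximal compact $H$-invariant set $Z$ on which the $H$-action is CAM, covers $X$ by the translates $g_{i}(Z)$ via Lemma~\ref{lemma:lemmaforfinindex}, and then uses the dense $H$-orbit of $x$ to force $X=g_{i}(Z)$ for the index $i$ with $x\in g_{i}(Z)$. You instead verify Definition~\ref{def:CAM} directly: conditions (1) and (2) are immediate (do state explicitly that a dense $H$-orbit yields topological transitivity because, for a group action, every point on a dense orbit itself has dense orbit), and for (3) you run the coset-averaging computation of Lemma~\ref{lemma:lemmaforfinindex} on the proper closed $H$-invariant set $A$ itself rather than on $\partial Y$, the one new ingredient being properness of $A'=\bigcup_{i}g_{i}(A)$, which you get by using normality to transport the density of $Hx$ to each translate $g_{i}^{-1}x$. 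This is leaner than the paper's proof --- it avoids Theorem~\ref{thm:finiteindex} and Zorn's lemma altogether --- though the paper's route has the added payoff of producing a CAM $H$-subsystem even when no dense $H$-orbit is assumed.

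One caution about your closing observation: the claimed ``a posteriori equivalence'' between the hypothesis ``some $H$-orbit is dense'' and ``some $H$-orbit is infinite'' does not hold, and the hypothesis of the corollary cannot be weakened in that way. Take a $\Z$-CAM system $(Y,S)$, let $X=Y\times\{0,1\}$, let $G=\Z\times(\Z/2\Z)$ act by $(n,\epsilon)\cdot(y,i)=(S^{n}y,\,i+\epsilon \bmod 2)$, and let $H=\Z\times\{0\}$, a normal subgroup of index two. One checks that $(X,G)$ is CAM, yet every $H$-orbit is contained in $Y\times\{0\}$ or $Y\times\{1\}$, so no $H$-orbit is dense even though aperiodic points of $Y$ give infinite $H$-orbits; and indeed the $H$-action on $X$ is not CAM, since $Y\times\{0\}$ is a proper infinite closed $H$-invariant set. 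What your argument actually shows is only the one-sided statement that, under the stated hypothesis, every infinite $H$-orbit is dense; the converse implication (infinite $H$-orbit exists $\Rightarrow$ dense $H$-orbit exists) is false in general, so the remark should be dropped or rephrased. This does not affect the proof of the corollary itself, which is complete.
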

\begin{proof}
Let $Z$ denote an $H$-invariant set in $X$ on which the action of $H$ is CAM, which exists by Theorem~\ref{thm:finiteindex}. By Lemma~\ref{lemma:lemmaforfinindex} we have $\bigcup_{i=1}^{m}g_{i}(Z) = X$. Moreover, it is straightforward to check that each $g_{i}(Z)$ is $H$-invariant and the action of $H$ on any $g_{i}(Z)$ is also CAM. Now if $x \in X$ has dense $H$-orbit we may choose $g_{i}$ such that $x \in g_{i}(Z)$, and then $X = g_{i}(Z)$.
\end{proof}
It follows immediately from Theorem~\ref{thm:finiteindex} and Example~\ref{example:autos} that any finite index subgroup of the automorphism group of a mixing shift of finite type is CAM.  Combining the same example with Corollary~\ref{cor:finindex} we obtain the stronger statement for shifts of finite type.
\begin{corollary}
If  $H$ is a finite index subgroup of the automorphism group of a mixing shift of finite type $(X, \sigma)$, then the $H$-subaction on $X$ is a CAM system.
%
%
\end{corollary}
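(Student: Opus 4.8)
The plan is to combine Example~\ref{example:autos} with Corollary~\ref{cor:finindex}; the two extra ingredients needed are a reduction to the case where $H$ is normal in $\aut(X,\sigma)$ and the production of a single point with dense $H$-orbit. Throughout write $G=\aut(X,\sigma)$, and recall from Example~\ref{example:autos} that the $G$-action on $X$ is CAM (we take $(X,\sigma)$ to be infinite, as in that example).

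The key observation is that $H$ contains a nontrivial power of the shift. Since $\sigma$ commutes with itself we have $\sigma\in G$, and since $[G:H]<\infty$ the left cosets $\sigma^{j}H$ for $j\ge 0$ cannot all be distinct, so $\sigma^{k}\in H$ for some $k\ge 1$. Now $(X,\sigma^{k})$ is again an infinite topologically mixing shift of finite type (up to conjugacy, via the length-$k$ recoding), hence point transitive, so there is $x_{0}\in X$ whose $\sigma^{k}$-orbit is dense in $X$. As $\sigma^{k}\in H$, the $\sigma^{k}$-orbit of $x_{0}$ is contained in its $H$-orbit, and therefore $x_{0}$ has dense $H$-orbit in $X$.

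If $H$ is normal in $G$ we are done: Corollary~\ref{cor:finindex}, applied to the CAM $G$-action on $X$, to the finite index normal subgroup $H$, and to the point $x_{0}$, shows that the $H$-subaction on $X$ is CAM. For arbitrary $H$, let $K\trianglelefteq G$ be the normal core of $H$ in $G$, a finite index normal subgroup contained in $H$. The previous two paragraphs, applied to $K$ (which also contains some $\sigma^{k'}$, $k'\ge 1$, by the same pigeonhole argument), show that the $K$-subaction on $X$ is CAM. It then remains to bootstrap from $K$ to $H$, which is routine: the $H$-action is faithful since the $G$-action is; if $A\subseteq X$ is a proper closed $H$-invariant set then it is $K$-invariant and proper, hence finite because $(X,K)$ is CAM; a $K$-periodic point $p$ has finite $K$-orbit, so its $H$-orbit (a union of finitely many translates of that orbit, one per coset of $K$ in $H$) is finite as well, whence density of $K$-periodic points yields density of $H$-periodic points; and a dense $K$-orbit is in particular a dense $H$-orbit, giving point transitivity. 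Thus $(X,H)$ is CAM.

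The step I expect to be the crux is the observation that $H$ must contain a power of $\sigma$: that is exactly what produces a point with dense $H$-orbit and hence makes Corollary~\ref{cor:finindex} applicable. The normal-core reduction and the closing bootstrap are bookkeeping.
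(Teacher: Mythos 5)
Your proof is correct and follows essentially the same route as the paper, which simply combines Example~\ref{example:autos} with Corollary~\ref{cor:finindex}. Your two additions — that some power of $\sigma$ lies in $H$ (pigeonhole on cosets), producing a point with dense $H$-orbit, and the normal-core reduction with the bootstrap from $K$ to $H$ to handle non-normal $H$ — are exactly the details the paper leaves implicit, and they are carried out correctly.
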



\section{{Existence of a {$\Z$}-CAM system}}
\label{sec:Zcamsystem}

\subsection{Construction of a CAM $\mathbb{Z}$-system with multiple nonatomic, ergodic invariant measures}

In this section we show that a $\Z$-CAM subshift exists. Moreover, the $\Z$-CAM subshift that we construct has two different, non-atomic ergodic probability measures.

\label{sec:construction}

\subsubsection{Setting notation}
We build the system as a subshift on the alphabet $\CA = \{0,1\}$.
The system is defined recursively, defining the language of the system in levels.
The first three levels play a special role in starting the inductive process of defining the words on higher levels.
Beginning with level four, we use a recursive formula to define the words, and the words constructed are of two types: those used to form the periodic points, denoted by the letter $w$ with appropriate subscripts, and those designed to produce dense orbits, denoted by the letters $a$ and $b$ with appropriate subscripts.
Throughout the construction, the periodic words have two subscripts, with the second coordinate denoting the level at which
the word is introduced, whereas the density words come in two types ($a$ and $b$) but only have a single subscript, also indicating the level at which such a word is introduced.
We remark that, in our construction, we build a single infinite word that has the desired property, and its orbit closure is the CAM system.  We do not explicitly build the language in our construction.

We fix a sequence $\{\varepsilon_k\}_{k=1}^{\infty}$ for use throughout our construction, which we refer to as a
{\em frequency sequence}: assume that $\{\varepsilon_k\}_{k=1}^{\infty}$ is a sequence of non-negative real numbers that satisfy $\sum_{k=1}^{\infty}\varepsilon_k<1/3$, and further assume that for any $N\geq1$,  we have
\begin{equation}\label{eq:sum}
\sum_{n=N}^{\infty}\varepsilon_n<\frac{1}{3^N}.
\end{equation}
To keep track of frequency counts,
for words $u,v\in\{0,1\}^*$ of finite length, define $\mathcal{N}(u,v)$ to be the number of times $u$ occurs as a subword of $v$.

\subsubsection{Level 1 words (introduction of the words $w$)}
There are $2$ words on this level,
$$
w_{(1,1)}=0 \quad \text{ and } \quad w_{(2,1)}=1,
$$
and we call these the {\em level-1 words}.  Set $\mathcal{W}_1=\{w_{(1,1)},w_{(2,1)}\}$.
\subsubsection{Level 2 words (introduction of the words $a$ and $b$)} There are $4$ words on this level,
playing two distinct roles as distinguished by two types: we have periodic words $w$ and density words $a,b$.  This level also introduces a parameter $n_2>1$ that controls the densities (note that this is the first level with such a parameter and so there is no $n_1$; the meaning of this is  clarified momentarily). We define:
\begin{eqnarray*}
w_{(1,2)}&=&w_{(1,1)}^{n_2+1} \\ \\
w_{(2,2)}&=&w_{(2,1)}^{n_2+1} \\ \\
a_2&=&w_{(1,1)}w_{(2,1)}^{n_2} \\ \\
b_2&=&w_{(1,1)}^{n_2}w_{(2,1)}
\end{eqnarray*}
The parameter $n_2$ is chosen sufficiently large such that $\mathcal{N}(0,a_2)/|a_2|<\varepsilon_1$ and $\mathcal{N}(1,b_2)/|b_2|<\varepsilon_1$.  In other words, $a_2$ consists almost entirely  of $1$'s, $b_2$  almost entirely  of $0$'s, and all of the words constructed on this level have the same length.  We call the four words introduced at this point the {\em level-2 words} and we write $\mathcal{W}_2=\{w_{(1,2)},w_{(2,2)},a_2,b_2\}$.
For use in verifying properties of the words constructed at higher levels, we describe the basic properties of the words constructed at level 2.
\begin{proposition}\label{prop:level1}
For all distinct words $u,v\in\mathcal{W}_2$, the word $u$ does not occur as a subword of $vv$.
\end{proposition}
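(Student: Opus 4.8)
The plan is to reduce the non-occurrence statement to an elementary invariant computation. First I would record that every word in $\mathcal{W}_2$ has the same length: $|w_{(1,2)}| = |w_{(2,2)}| = |a_2| = |b_2| = n_2+1$; write $N = n_2+1$ for brevity. The key preliminary observation is that a word $u$ of length $N$ can occur as a subword of $vv$ (with $|v| = N$) only beginning at one of the positions $0,1,\dots,N$ inside $vv$: the two extreme positions $0$ and $N$ reproduce $v$ itself, while an intermediate starting position $i$ with $0 < i < N$ produces exactly the cyclic rotation of $v$ obtained by moving the length-$(N-i)$ suffix of $v$ to the front. Hence $u$ occurs in $vv$ if and only if $u$ is a cyclic rotation of $v$ (allowing the trivial rotation).

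Next I would use the fact that the number of occurrences of the letter $0$ in a word is invariant under cyclic rotation. It therefore suffices to check that the four level-$2$ words have pairwise distinct numbers of $0$'s. Counting directly from the definitions: $w_{(1,2)} = w_{(1,1)}^{\,n_2+1}$ has $n_2+1$ zeros, $w_{(2,2)} = w_{(2,1)}^{\,n_2+1}$ has $0$ zeros, $a_2 = w_{(1,1)}w_{(2,1)}^{\,n_2}$ has exactly one $0$, and $b_2 = w_{(1,1)}^{\,n_2}w_{(2,1)}$ has $n_2$ zeros. Since $n_2 > 1$, the four counts $n_2+1,\ 0,\ 1,\ n_2$ are pairwise distinct.

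Combining the two observations finishes the argument: if $u,v \in \mathcal{W}_2$ are distinct, then they contain different numbers of $0$'s, so $u$ cannot be a cyclic rotation of $v$, and therefore $u$ does not occur as a subword of $vv$. I do not anticipate a real obstacle here; the only points meriting care are the reduction showing that an occurrence of one length-$N$ word inside the square of another forces a cyclic rotation, and noting that the hypothesis $n_2 > 1$ is precisely what makes the four $0$-counts distinct (indeed, for $n_2 = 1$ the words $a_2$ and $b_2$ would even coincide, and the claimed rigidity could fail). If one prefers to avoid invoking the cyclic-rotation reduction, one can instead argue more crudely case by case, since $w_{(1,2)}$ and $w_{(2,2)}$ are constant words and $a_2,b_2$ each contain a single isolated minority letter; but the $0$-count argument handles all pairs uniformly and is the cleanest route.
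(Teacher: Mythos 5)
Your proof is correct, and it takes a genuinely different route from the paper's. You exploit the fact that all four level-$2$ words share the length $N=n_2+1$, so that the length-$N$ subwords of $vv$ are precisely the cyclic rotations of $v$, and you then separate the four words by the rotation-invariant number of $0$'s, namely $n_2+1,\,0,\,1,\,n_2$, which are pairwise distinct because $n_2>1$. The paper instead argues case by case: $a_2$ and $b_2$ cannot occur in $w_{(i,2)}w_{(i,2)}$ because the latter is constant; $w_{(1,2)},w_{(2,2)}$ cannot occur in $a_2a_2$ or $b_2b_2$ because the longest constant run there has length $n_2<n_2+1$; and the remaining case ($a_2$ inside $b_2b_2$, and symmetrically) is handled by a frequency estimate using $\mathcal{N}(0,a_2)/|a_2|<\varepsilon_1$ together with $\varepsilon_1<1/3$. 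Your argument is more elementary and uniform: it needs only $n_2\geq 2$ and never invokes the frequency sequence. What the paper's heavier argument buys is that it is the base case of exactly the counting scheme (overlap counts, $\varepsilon$-frequency bounds, minimal periods) that is iterated in Propositions~\ref{prop:level2} and~\ref{prop:levelkplus1}, where the frequency inequalities are needed anyway; your letter-count invariant is tailored to this level, and while the cyclic-rotation reduction would still apply at higher levels (same-length words), replacing the paper's method there would require separately verifying that the relevant invariants remain distinct, so it does not by itself substitute for the inductive machinery.
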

\begin{proof}
First we establish the statement when $v\in\{w_{(1,2)},w_{(2,2)}\}$ and $u\in\mathcal{W}_2$ is distinct from $v$.  Neither $a_2$ nor $b_2$ occurs as a subword of $w_{(i,2)}w_{(i,2)}$ for $i=1,2$, because those words are $1$-periodic whereas $a_2$ and $b_2$ are not.  The word $w_{(1,2)}$ does not occur as a subword of $w_{(2,2)}w_{(2,2)}$, since the symbol $0$ does not occur in $w_{(2,2)}$ but does occur in $w_{(1,2)}$ and similarly $w_{(2,2)}$ does  not occur as a subword of $w_{(1,2)}w_{(1,2)}$.  This establishes the statement when $v\in\{w_{(1,2)},w_{(2,2)}\}$.

Next we establish the statement when $v\in\{a_2,b_2\}$ and $u\in\mathcal{W}_2$ is distinct from $v$.  Neither $w_{(1,2)}$ nor $w_{(2,2)}$  occurs as a subword of $a_2a_2$, as the longest $1$-periodic subword of $a_2a_2$ has length $n_2\cdot|w_{(2,1)}|<|w_{(i,2)}|$ for $i=1,2$.  Similarly, neither can occur as a subword of $b_2b_2$.  We claim that the word $a_2$ does not occur as a subword of $b_2b_2$ (the argument that $b_2$ does not occur as a subword of $a_2a_2$ is analogous).  By construction, $\mathcal{N}(0,a_2)/|a_2|<\varepsilon_1$ and so $\mathcal{N}(0,a_2)<\varepsilon_1|a_2|$.  Therefore $\mathcal{N}(1,a_2)\geq(1-\varepsilon_1)|a_2|$.  Similarly, $\mathcal{N}(0,b_2b_2)\geq(1-\varepsilon_1)|b_2b_2|$.  If $a_2$ occurs as a subword of $b_2b_2$ then, using the fact that $|a_2|=\frac{1}{2}|b_2b_2|$, we obtain
$$
\mathcal{N}(1,b_2b_2)\geq\mathcal{N}(1,a_2)\geq(1-\varepsilon_1)|a_2|=\frac{1-\varepsilon_1}{2}|b_2b_2|.
$$
This implies that $\mathcal{N}(0,b_2b_2)\leq\left(1-\frac{1-\varepsilon_1}{2}\right)|b_2b_2|=\frac{1+\varepsilon_1}{2}|b_2b_2|$.  Combined with our earlier observation that $\mathcal{N}(0,b_2b_2)\geq(1-\varepsilon_1)|b_2b_2|$, we get $1-\varepsilon_1\leq\frac{1+\varepsilon_1}{2}$.  However, this can not occur because
we assumed that  $\varepsilon_1<\sum_{k=1}^{\infty}\varepsilon_k<1/3$.  Therefore the statement holds when $v\in\{a_2,b_2\}$.
\end{proof}

\subsubsection{Level $3$ words (recursive definitions of periodic and density words)}
There are $6$ words on this level,  again playing distinct roles distinguished by two types: periodic words $w$ and density words $a,b$.  Again we introduce a parameter $n_3 >1 $ to control the densities.   Define the words by setting
\begin{eqnarray*}
w_{(1,3)}&=&w_{(1,2)}^{5n_3+4} \\ \\
w_{(2,3)}&=&w_{(2,2)}^{5n_3+4} \\ \\
w_{(3,3)}&=&a_2^{5n_3+4 }\\ \\
w_{(4,3)}&=&b_2^{5n_3+4} \\ \\
a_3&=&a_2^{n_3}w_{(1,2)}a_2^{n_3}w_{(2,2)}a_2^{n_3}a_2a_2^{n_3}b_2a_2^{n_3} \\ \\
b_3&=&b_2^{n_3}w_{(1,2)}b_2^{n_3}w_{(2,2)}b_2^{n_3}a_2b_2^{n_3}b_2b_2^{n_3}
\end{eqnarray*}
and choose $n_3$ such  that $\mathcal{N}(0,a_3)/|a_3|<\varepsilon_1+\varepsilon_2$ and $\mathcal{N}(1,b_3)/|b_3|<\varepsilon_1+\varepsilon_2$ (we note that this is possible because $\mathcal{N}(0,a_2)/|a_2|<\varepsilon_1$ and $a_3$ is mainly made of concatenated copies of $a_2$, similarly $\mathcal{N}(1,b_2)/|b_2|<\varepsilon_1$).
We further assume that $n_3$ has been chosen sufficiently large such that for $u\in\{w_{(1,2)},w_{(2,2)},b_2\}$,  we have
\begin{equation}\label{eq:fraction1}
\frac{\mathcal{N}(u,a_3a_3)}{|a_3a_3|}<\frac{\varepsilon_2}{|u|(2|u|-1)},
\end{equation}
and note that this can be done since $a_3a_3$ is mainly made of concatenated copies of $a_2a_2$ and  $u$ does not arise as a subword of $a_2a_2$. Note that if $|u|=1$ then the denominator on the right-hand side of this inequality is $1$, so this formula generalizes the analogous inequality (and assumptions about the size of $n_2$) in the previous level.  Using the same reasoning, this choice of $n_3$ ensures that if $u\in\{w_{(1,2)},w_{(2,2)},a_2\}$, then we have
\begin{equation}\label{eq:fraction2}
\frac{\mathcal{N}(u,b_3b_3)}{|b_3b_3|}<\frac{\varepsilon_2}{|u|(2|u|-1)}.
\end{equation}
Finally note that by choosing $n_3$ sufficiently large, we can guarantee that $|a_2|/|a_3|$ is as small as desired.  For use in the proof of the next proposition describing occurrences of words and subwords, we have a specific constant we would like it to be smaller than.  Let $p$ be the minimal period of the word $a_2a_2$ which, by symmetry of the construction, is also the minimal period of $b_2b_2$.  We assume that $n_3$ is  chosen to be sufficiently large such that
\begin{equation}\label{eq:fraction3}
\frac{|a_2|}{|a_3|}<\frac{1}{6p}-\frac{1}{9|a_2|}.
\end{equation}
We call the words constructed on this level the {\em level-3 words} and set $\mathcal{W}_3$ to be the set of all level-3 words.
\begin{proposition}\label{prop:level2}
For any choice of  distinct words $u,v\in\mathcal{W}_3$, the word $u$ does not occur as a subword of $vv$.
\end{proposition}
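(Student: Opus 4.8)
The plan is to mirror the structure of the proof of Proposition~\ref{prop:level1}, splitting into two cases according to whether $v$ is a periodic word $w_{(i,3)}$ or a density word $a_3,b_3$, and within each case checking all choices of the distinct word $u\in\mathcal{W}_3$. First I would handle the case $v\in\{w_{(1,3)},w_{(2,3)},w_{(3,3)},w_{(4,3)}\}$: each such $v$ is $|v_0|$-periodic for the corresponding level-2 word $v_0\in\mathcal{W}_2$ (of length $|v_0|$, with $|v|=(5n_3+4)|v_0|$), so $vv$ is $|v_0|$-periodic, and hence the only words that can occur as subwords of $vv$ of length $\geq |v_0|$ are themselves $|v_0|$-periodic with the same period. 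By Proposition~\ref{prop:level1}, no level-2 word distinct from $v_0$ shares this period (indeed $v_0 v_0$ has minimal period $|v_0|$ only for $v_0\in\{w_{(1,2)},w_{(2,2)}\}$, and $a_2a_2$, $b_2b_2$ have their own minimal periods $p$), so any $u\in\mathcal{W}_3$ built from a level-2 word $\neq v_0$ cannot occur in $vv$; and a $w_{(i,3)}$ built from $v_0$ itself is excluded because $w$-words at level 3 all have the same length $|v|$, so $w_{(i,3)}$ does not occur inside $vv$ except as one of the two copies. For $v=w_{(3,3)}=a_2^{5n_3+4}$ one also needs that $b_3$ and $a_3$ do not occur in $vv$: this follows because $a_3$ (resp. $b_3$) contains a copy of $w_{(1,2)}$ or $b_2$ (resp. $a_2$) as a subword, and those do not appear in the $|a_2|$-periodic word $vv$.

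Next I would handle the harder case $v\in\{a_3,b_3\}$; by the symmetry of the construction it suffices to treat $v=a_3$. The subcases $u\in\{w_{(1,3)},w_{(2,3)},w_{(3,3)},w_{(4,3)}\}$ are ruled out by a length/periodicity count: each $w_{(i,3)}$ is a long power of a level-2 word, hence contains a long periodic block, but $a_3a_3$ contains no periodic block of length $|w_{(i,3)}|=(5n_3+4)|v_0|$ — the longest run of a single level-2 word inside $a_3a_3$ is roughly $2n_3+1$ copies of $a_2$ (across the junction of the two copies of $a_3$), and with the calibration~\eqref{eq:fraction3} relating $|a_2|/|a_3|$ to the period $p$, this is shorter than $|w_{(i,3)}|$. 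Here I'd want to be careful about the maximal periodic stretch: inside a single $a_3$ the longest block of $a_2$'s is $n_3$, and the two copies of $a_3$ meet as $\ldots b_2 a_2^{n_3}\mid a_2^{n_3}w_{(1,2)}\ldots$, giving a run of $2n_3$ copies of $a_2$ there, which is still $<5n_3+4$ copies. The remaining and genuinely delicate subcase is $u=b_3$: one shows $b_3$ cannot occur as a subword of $a_3a_3$ by a density (frequency) argument exactly as in Proposition~\ref{prop:level1}. Since $\mathcal{N}(0,b_3)\geq(1-\varepsilon_1-\varepsilon_2)|b_3|$ while $a_3a_3$ has $\mathcal{N}(1,a_3a_3)\geq(1-\varepsilon_1-\varepsilon_2)|a_3a_3|$, if $b_3$ occurred in $a_3a_3$ we would get a subword of $a_3a_3$ of length $|b_3|$ that is mostly $0$'s; comparing with the fact that $a_3a_3$ is overwhelmingly $1$'s and using $|b_3|$ comparable to $|a_3|$ (both are the same linear combination of level-2 lengths, so in fact $|b_3|=|a_3|$), one derives an inequality of the form $1-\varepsilon_1-\varepsilon_2\leq \tfrac{1+\varepsilon_1+\varepsilon_2}{2}$ or similar, contradicting $\sum\varepsilon_k<1/3$. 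The bookkeeping is slightly more involved than at level 2 because $|b_3|=|a_3|$ rather than $|a_2|=\tfrac12|b_2b_2|$, so one works directly with occurrences at a single position rather than with a clean halving, but the spirit is identical.

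The main obstacle I anticipate is the subcase $u=b_3$ inside $v=a_3$ (and its mirror), because unlike level 2 the density words $a_3,b_3$ are not pure powers of $a_2,b_2$ — they also contain embedded copies of $w_{(1,2)}$, $w_{(2,2)}$, and one copy each of $a_2$/$b_2$ — so the frequency estimate must absorb these "impurities." This is precisely what the inequalities~\eqref{eq:fraction1} and~\eqref{eq:fraction2} were set up for: they control the frequency of these short interlopers in $a_3a_3$ and $b_3b_3$, ensuring the density of $0$'s (resp. $1$'s) in $a_3a_3$ (resp. $b_3b_3$) is still close enough to $0$ that the counting contradiction goes through. So the step-by-step plan is: (i) dispatch $v=w_{(i,3)}$ via periodicity plus Proposition~\ref{prop:level1}; (ii) for $v\in\{a_3,b_3\}$, dispatch $u=w_{(i,3)}$ via the maximal-periodic-run bound using~\eqref{eq:fraction3}; (iii) for $v\in\{a_3,b_3\}$, dispatch $u\in\{b_3\}$ (resp. $u\in\{a_3\}$) via the density argument using $\sum\varepsilon_k<1/3$ together with~\eqref{eq:fraction1}–\eqref{eq:fraction2} to control the embedded short words; and throughout invoke the symmetry $0\leftrightarrow 1$, $a\leftrightarrow b$ of the construction to halve the casework.
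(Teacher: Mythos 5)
Your proposal is correct in outline, but on the hard case it takes a genuinely different (and more elementary) route than the paper. For $v=w_{(i,3)}$ your argument via Proposition~\ref{prop:level1} and alignment of level-2 blocks is essentially the paper's. The divergence is in $v\in\{a_3,b_3\}$: the paper handles $u\in\{w_{(1,3)},w_{(2,3)},w_{(4,3)},b_3\}$ all at once by counting occurrences of $a_2$ in $a_3a_3$ --- a lower bound $\mathcal{N}(a_2,a_3a_3)/|a_3a_3|\geq\frac1p-3\varepsilon_2/|a_2|$ coming from~\eqref{eq:fraction1} and the minimal period $p$ of $a_2a_2$, against an upper bound obtained by writing $a_3a_3=xuy$ and using $\mathcal{N}(a_2,u)\leq 6|a_2|$ together with~\eqref{eq:fraction3} --- reserving the power-run argument only for $u=w_{(3,3)}$. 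You instead dispatch $u=b_3$ by the single-letter density count exactly as at level 2 (and your inequality $1-\varepsilon_1-\varepsilon_2\leq\frac{1+\varepsilon_1+\varepsilon_2}{2}$, i.e.\ $\varepsilon_1+\varepsilon_2\geq 1/3$, does contradict~\eqref{eq:sum}, since $|b_3|=\frac12|a_3a_3|$; the bookkeeping is in fact no worse than at level 2), and you dispatch $w_{(1,3)},w_{(2,3)},w_{(4,3)}$ by run-length/periodicity. This works at level 3, and is simpler; what the paper's heavier occurrence-counting buys is that it is precisely the template that survives the induction to level $k+1$, where single-letter frequencies no longer separate the relevant words (e.g.\ $w_{(3,k+1)}$, built ultimately from $a_2$-powers, is almost all $1$'s just like $a_{k+1}$). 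Two spots in your sketch need tightening: (a) for $u=w_{(4,3)}=b_2^{5n_3+4}$, "the longest run of $a_2$'s" is not the relevant quantity; you must argue that a long $b_2$-periodic block at an arbitrary offset would force several complete words of the level-2 parsing of $a_3a_3$ to be subwords of $b_2b_2$, hence equal to $b_2$ by Proposition~\ref{prop:level1}, which the parsing forbids (alternatively, the frequency bound~\eqref{eq:fraction1} caps the number of occurrences of $b_2$ below $5n_3+4$); the same alignment remark is what makes the $w_{(3,3)}$ run-length bound rigorous. (b) Minor slips: the longest block of consecutive $a_2$'s inside a single $a_3$ is $2n_3+1$ (the middle $a_2^{n_3}a_2a_2^{n_3}$), not $n_3$ --- harmless since $2n_3+2<5n_3+4$ --- and~\eqref{eq:fraction3} plays no role in your run-length case; it belongs to the paper's counting argument.
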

\begin{proof}
We check that for each $v\in\mathcal{W}_3$ and  $u\in\mathcal{W}_3\setminus\{v\}$, the word $u$ does not occur as a subword of $vv$, checking cases depending on the type of word $v$.

First consider when  $v=w_{(i,3)}$ for some $i\in\{1,2\}$.   We consider two cases, depending on the choice of $u$. First suppose that $u\in\{w_{(3,3)},w_{(4,3)},a_3,b_3\}$.  By Proposition~\ref{prop:level1}, neither $a_2$ nor $b_2$ occurs as a subword of $w_{(i,2)}w_{(i,2)}$.  The word $vv=w_{(i,3)}w_{(i,3)}$ is the self-concatenation of a large number of copies of $w_{(i,2)}$ and so neither $a_2$ nor $b_2$ occurs as a subword of $vv$.  But at least one of $a_2$ and $b_2$ occurs as a subword of $u$, and so $u$ cannot be a subword of $vv$.
Next suppose $u=w_{(j,3)}$ where $j\in \{1,2\}$ and $j\neq i$.  The word $u=w_{(j,3)}$ is not a subword of $vv=w_{(i,3)}w_{(i,3)}$ because $w_{(j,2)}$ is not a subword of $w_{(i,2)}w_{(i,2)}$ and, as noted, $vv$ is the self-concatenation of copies of $w_{(i,2)}$ while $w_{(j,3)}$ is the self-concatenation of copies of $w_{(j,2)}$.  Thus
$u$ does not occur as a subword of $vv$ for any $u\in\mathcal{W}_3\setminus\{v\}$.  Thus the statement holds when $v\in\{w_{(1,3)},w_{(2,3)}\}$.

Next consider when $v=w_{(i,3)}$ for some $i\in\{3,4\}$.  Again we have two cases, depending on the choice of $u$.
First suppose $u\in\{w_{(1,3)},w_{(2,3)},a_3,b_3\}$.
By Proposition~\ref{prop:level1}, neither $w_{(1,2)}$ nor $w_{(2,2)}$ occurs as a subword of $a_2a_2$ or of $b_2b_2$.  The word $vv=w_{(i,3)}w_{(i,3)}$ is the self-concatenation of a large number of copies of $a_2$ or of $b_2$, and so neither $w_{(1,2)}$ nor $w_{(2,2)}$ occurs as a subword of $vv$.
However at least one of $w_{(1,2)}$ and $w_{(2,2)}$ occurs in $u$, and so $u$ is not a subword of $vv$.  We next consider when $j\in\{3,4\}\setminus\{i\}$ and let $u=w_{(j,3)}$.  Then one of the words $u$ and $v$ is the self-concatenation of many copies of $a_2$ and the other is the self-concatenation of many copies of $b_2$.  By Proposition~\ref{prop:level1}, $a_2$ is not a subword of $b_2b_2$ and $b_2$ is not a subword of $a_2a_2$, and so $u$ is not a subword of $vv$.  This establishes the statement when $v\in\{w_{(3,3)},w_{(4,3)}\}$.

Finally consider when $v\in\{a_3,b_3\}$.  We give the argument when $v=a_3$, and the argument for $b_3$ is similar. Again, we have two cases, depending on the choice of $u$.
First suppose $u\in\{w_{(1,3)},w_{(2,3)},w_{(4,3)},b_3\}$.  By~\eqref{eq:fraction1},
we know that $\mathcal{N}(x,a_3a_3)/|a_3a_3|<\varepsilon_2/|x|(2|x|-1)$ for $x\in\mathcal{W}_2\setminus\{a_2\}$.  For any such $x$ and any particular occurrence of $x$ in $a_3a_3$, there are $2|x|-1$ subwords of $a_3a_3$ that have length $|x|$ and partially (or completely) overlap this occurrence of $x$.  This means that, for any $x\in\mathcal{W}_2\setminus\{a_2\}$, if we look at the collection of all locations within $a_3a_3$ where $x$ occurs, we have
$$
\frac{\text{subwords of length $|x|$ in $a_3a_3$ that partially overlap an occurrence of $x$}}{|a_3a_3|}<\frac{\varepsilon_2}{|x|}.
$$
Let $p$ be the minimal period of the word $a_2a_2$.  Note that $p\leq|a_2|$.  Since all four words in $\mathcal{W}_2$ have the same length, we deduce that
\begin{equation}\label{eq:calc}
\frac{\mathcal{N}(a_2,a_3a_3)}{|a_3a_3|}\geq\frac{1}{p}-3\cdot\frac{\varepsilon_2}{|a_2|}
\end{equation}
because $a_3a_3$ is made by concatenating words in $\mathcal{W}_2$.  On the other hand, we claim that $\mathcal{N}(a_2,u)\leq6|a_2|$. To check this, note that if $u=b_3$, any occurrence of $a_2$ in $u$ must partially overlap $w_{(1,2)}$, $w_{(2,2)}$ or $a_2$ in the definition of $b_3$, because it does not occur as a subword of $b_2b_2$, and there are only $6|a_2|$ locations that have such overlaps.  If $u\neq b_3$ then $a_2$ does not occur in $u$, by Proposition~\ref{prop:level1} and the claim follows.
If $u$ occurred as a subword of $a_3a_3$, such an occurrence would account for half of the letters in $a_3a_3$ and we could write
$$
a_3a_3=xuy
$$
where $|x|+|y|=\frac{|a_3a_3|}{2}$.  We refer to this decomposition of $a_3a_3$ in what follows.  Any occurrence of $a_2$ in $a_3a_3$ is either entirely within $u$, entirely within $x$, entirely within $y$, or partially overlaps $x$ (or $y$) and partially overlaps $u$.  There are at most $|x|+|y|=|a_3a_3|/2$ many possible locations for $a_2$ to occur in $a_3a_3$ that are not entirely within $u$, and these locations consist of two disjoint intervals of starting points.  Because the minimal period of $a_2a_2$ is $p$, within either of these intervals, the density with which $a_2$ occurs is at most $1/p$ as any higher density would force two occurrences of $a_2$ whose starting points differ by less than $p$ and therefore contradicting minimality of $p$.  This implies that
\begin{eqnarray*}
\frac{\mathcal{N}(a_2,a_3a_3)}{|a_3a_3|}&\leq&\frac{\text{$\#$ times $a_2$ occurs not entirely in $u$}}{|a_3a_3|}+\frac{\text{$\#$ times $a_2$ occurs in $u$}}{|a_3a_3|} \\
&\leq&\frac{1}{2p}+\frac{6|a_2|}{|a_3a_3|}=\frac{1}{2p}+3\cdot\frac{|a_2|}{|a_3|}.
\end{eqnarray*}
But, by construction, by Equation~\eqref{eq:fraction3} we have that $|a_2|/|a_3|<\frac{1}{6p}-\frac{1}{9|a_2|}$, and so $\mathcal{N}(a_2,a_3a_3)/|a_3a_3|<\frac{1}{p}-\frac{1}{3|a_2|}$.  By our choice of frequency sequence in~\eqref{eq:sum}, it follows that  $\varepsilon_2<\sum_{k=2}^{\infty}\varepsilon_k<1/9$ and so equation~\eqref{eq:calc} implies that $\mathcal{N}(a_2,a_3a_3)/|a_3a_3|\geq\frac{1}{p}-\frac{1}{3|a_2|}$.  This is a contradiction, and so $u$ cannot occur as a subword of $vv=a_3a_3$.
Lastly, suppose that $u=w_{(3,3)}$.
By construction, $u$ is the self-concatenation of $5n_3+4$ copies of $a_2$.  Then $u$ cannot occur as a subword of $vv=a_3a_3$ because $$
a_3a_3=a_2^{n_3}w_{(1,2)}a_2^{n_3}w_{(2,2)}a_2^{n_3}a_2a_2^{n_3}b_2a_2^{n_3}a_2^{n_3}w_{(1,2)}a_2^{n_3}w_{(2,2)}a_2^{n_3}a_2a_2^{n_3}b_2a_2^{n_3}
$$
and so the largest power $m$ such that $a_2^m$ occurs as a subword of $a_3a_3$ satisfies $m\leq2n_3+2$, by Proposition~\eqref{prop:level1} (any larger power would force an occurrence of an element of $\mathcal{W}_2\setminus\{a_2\}$  as a subword of $a_2^m$).
This establishes the statement when $v=a_3$.  The argument for $v=b_3$ is similar, with the roles played by $a_2$ and $b_2$ switched.
\end{proof}

\subsubsection{Preparing to build the level $k+1$ words (inductive assumptions)}
For $k\geq3$,  we construct the words on level $k+1$ using the words on level $k$.  Again they come in two varieties (periodic words and density words), and again we have a parameter $n_{k+1}> 1$ chosen to guarantee that the words have the desired properties.  Inductively, we assume that we have constructed the level-$k$ words: these are words $w_{(i,k)}$ for all $1\leq i\leq2k-2$, words $a_k$ and $b_k$, with all of these words having equal length, and we denote the collection of all level-$k$ words by $\mathcal W_k$.
We further assume that we have defined the parameter $n_k\in\N$, and these constructions satisfy the following properties:
	\begin{enumerate}[label=(\roman*)]
	\item \label{item:firstk}
 For $1\leq i\leq2k-4$, we have
	$$
	w_{(i,k)}=w_{(i,k-1)}^{(2k-1)n_k+2(k-1)}.
	$$
	\item We have
	$$
	w_{(2k-3,k)}=a_{k-1}^{(2k-1)n_k+2(k-1)}.
	$$
	\item We have
	$$
	w_{(2k-2,k)}=b_{k-1}^{(2k-1)n_k+2(k-1)}.
	$$
	\item We have
	$$
	a_k=\left(\prod_{i=1}^{2k-4}a_{k-1}^{n_k}w_{(i,k-1)}\right)\cdot a_{k-1}^{n_k}\cdot\left(a_{k-1}\right)\cdot a_{k-1}^{n_k}\cdot\left(b_{k-1}\right)\cdot a_{k-1}^{n_k}.
	$$
	\item
 \label{item:lastk}
 We have
	$$
	b_k=\left(\prod_{i=1}^{2k-4}b_{k-1}^{n_k}w_{(i,k-1)}\right)\cdot b_{k-1}^{n_k}\cdot\left(a_{k-1}\right)\cdot b_{k-1}^{n_k}\cdot\left(b_{k-1}\right)\cdot b_{k-1}^{n_k}.
	$$
	\end{enumerate}
We further assume that for any word $u$ constructed on some level $m$ with $m<k$, other than when $u=a_m$, we have
\begin{equation}\label{eq:fraction4}
\frac{\mathcal{N}(u,a_ka_k)}{|a_ka_k|}<\frac{1}{|u|(2|u|-1)}\sum_{j=m}^{k-1}\varepsilon_j
\end{equation}
and that for any $m<k$ and any level-$m$ word $u$ other than $u=b_m$, we have
\begin{equation*}
\frac{\mathcal{N}(u,b_kb_k)}{|b_kb_k|}<\frac{1}{|u|(2|u|-1)}\sum_{j=m}^{k-1}\varepsilon_j.
\end{equation*}
Finally we assume that
\begin{equation}\label{claim:new}
\text{for any $u,v\in\mathcal{W}_k$ with $u\neq v$, the word $u$ does not occur as a subword of $vv$.}
\end{equation}

\subsubsection{Level-$(k+1)$ words (recursive definition)}
We apply the analogous procedure used for  level $3$  and  construct the periodic and density words, this time assuming the properties given in~\eqref{item:firstk} through~\eqref{item:lastk}.
	\begin{enumerate}[label=(\roman*), resume]
	\item \label{item:firstk+1}
 For  $1\leq i\leq2k-2$, define the first group of periodic words  by setting
	$$
	w_{(i,k+1)}=w_{(i,k)}^{(2k+1)n_{k+1}+2k}.
	$$
	\item Define the next periodic word by setting
	$$
	w_{(2k-1,k+1)}=a_k^{(2k+1)n_{k+1}+2k}.
	$$
	\item Define the last periodic word by setting
	$$
	w_{(2k,k+1)}=b_k^{(2k+1)n_{k+1}+2k}.
	$$
	\item Define the first type of density word by setting
	$$
	a_{k+1}=\left(\prod_{i=1}^{2(k+1)-4}a_k^{n_{k+1}}w_{(i,k)}\right)\cdot a_k^{n_{k+1}}\cdot\left(a_k\right)\cdot a_k^{n_{k+1}}\cdot\left(b_k\right)\cdot a_k^{n_{k+1}}.
	$$
	\item
 \label{item:lastk+1}
Define the second type of density word by setting
	$$
	b_{k+1}=\left(\prod_{i=1}^{2(k+1)-4}b_k^{n_{k+1}}w_{(i,k)}\right)\cdot b_{k}^{n_{k+1}}\cdot\left(a_{k}\right)\cdot b_{k}^{n_{k+1}}\cdot\left(b_{k}\right)\cdot b_{k}^{n_{k+1}}.
	$$
	\end{enumerate}
 We choose the parameter $n_{k+1}$ sufficiently large such that for every $m<k+1$ and every word $u\in\mathcal{W}_m\setminus\{a_m\}$, we have
\begin{equation}\label{eq:fraction5}
\frac{\mathcal{N}(u,a_{k+1}a_{k+1})}{|a_{k+1}a_{k+1}|}<\frac{1}{|u|(2|u|-1)}\sum_{j=m}^k\varepsilon_j.
\end{equation}
When $m<k$, recall that $\mathcal{N}(u,a_ka_k)/|a_ka_k|<(\sum_{j=m}^{k-1}\varepsilon_j)/|u|(2|u|-1)$ by~\eqref{eq:fraction4}.
Therefore it is possible to satisfy~\eqref{eq:fraction5} because, for $n_{k+1}$ large, the word $a_{k+1}$  primarily consists of concatenated copies of $a_k$ and so we can make the left hand side of~\eqref{eq:fraction5} as close as we want to $(\sum_{j=m}^{k-1}\varepsilon_j)/|u|(2|u|-1)$; in particular we can make it less than the right hand side of~\eqref{eq:fraction5}.  When $m=k$ it is possible to satisfy~\eqref{eq:fraction5} because $u$ does not occur as a subword of $a_ka_k$ and, again, $a_{k+1}$ is primarily made of concatenated copies of $a_k$.  Similarly we choose $n_{k+1}$  sufficiently large  such that if $u\in\mathcal{W}_m\setminus\{b_m\}$, then we have
\begin{equation}\label{eq:fraction6}
\frac{\mathcal{N}(u,b_{k+1}b_{k+1})}{|b_{k+1}b_{k+1}|}<\frac{1}{|u|(2|u|-1)}\sum_{j=m}^k\varepsilon_j.
\end{equation}
For our next estimate, let $p_k$ be the minimal period of the word $a_ka_k$.  For use in a future argument, we finally require that $n_{k+1}$ is sufficiently large such that
\begin{equation}\label{eq:inductive-bound}
\frac{|a_k|}{|a_{k+1}|}<\frac{1}{(4k-2)p_k}-\frac{1}{3^k\cdot|a_k|}.
\end{equation}
This is possible as long as the right-hand side of the inequality is positive.  Note that $p_k\leq|a_k|$ and that $4k-2<3^k$ because $k\geq3$, and that all of the words constructed on level $k+1$ have equal length.
We call the words constructed in~\eqref{item:firstk+1} through~\eqref{item:lastk+1} the {\em level-$(k+1)$ words} and let $\mathcal{W}_{k+1}$ denote the set of all level-$(k+1)$ words. Therefore, the only thing that is required now to complete our inductive construction is to prove the following proposition.

\begin{proposition}\label{prop:levelkplus1}
For any choice of distinct words  $u,v\in\mathcal{W}_{k+1}$, the word $u$ does not occur as a subword of $vv$.
\end{proposition}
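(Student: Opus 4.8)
The plan is to run the induction exactly along the lines of the proofs of Propositions~\ref{prop:level1} and~\ref{prop:level2}, now with the inductive hypothesis~\eqref{claim:new} playing the role that Proposition~\ref{prop:level1} played at level $3$, and with the estimates~\eqref{eq:fraction5},~\eqref{eq:fraction6}, and~\eqref{eq:inductive-bound} (together with the frequency-sequence assumption~\eqref{eq:sum}) replacing~\eqref{eq:fraction1},~\eqref{eq:fraction2}, and~\eqref{eq:fraction3}. Write $M=(2k+1)n_{k+1}+2k$ and let $L_k=|a_k|$ be the common length of the level-$k$ words. The argument splits according to the type of $v\in\mathcal{W}_{k+1}$.

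\textbf{Periodic $v$.} Suppose $v=w_{(i,k+1)}$, so $vv=z^{2M}$ where $z\in\mathcal{W}_k$ is the level-$k$ word from which $v$ is built ($z=w_{(i,k)}$ for $i\le 2k-2$, $z=a_k$ for $i=2k-1$, and $z=b_k$ for $i=2k$). Any $u\in\mathcal{W}_{k+1}\setminus\{v\}$ contains some level-$k$ word $z'\neq z$ as a subword: its base word if $u$ is periodic, and every level-$k$ word if $u\in\{a_{k+1},b_{k+1}\}$. Since all level-$k$ words have length $L_k$, every length-$L_k$ subword of $z^{2M}$ is already a subword of $zz$ (it is a copy of $z$, or a suffix of $z$ followed by a prefix of $z$), so~\eqref{claim:new} shows $z'$, and hence $u$, does not occur in $vv$. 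This disposes of all periodic $v$ uniformly, streamlining the first two cases of the proof of Proposition~\ref{prop:level2}.

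\textbf{Density $v$.} By the $a\leftrightarrow b$ symmetry of the construction (using~\eqref{eq:fraction6} in place of~\eqref{eq:fraction5}, and the $b$-version throughout) we may assume $v=a_{k+1}$. If $u=w_{(2k-1,k+1)}=a_k^{M}$, then inspecting the definition of $a_{k+1}$ shows the longest run of consecutive copies of $a_k$ occurring in $a_{k+1}a_{k+1}$ has length $2n_{k+1}+1$ (any longer run would force a word of $\mathcal{W}_k\setminus\{a_k\}$ to occur inside a power of $a_k$, contradicting~\eqref{claim:new}); since $2n_{k+1}+1<M$, $u$ is not a subword of $vv$. The remaining (main) case is $u\in\mathcal{W}_{k+1}\setminus\{a_{k+1},w_{(2k-1,k+1)}\}$. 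Let $p_k\le L_k$ be the minimal period of $a_ka_k$. Arguing exactly as for~\eqref{eq:calc}, using~\eqref{eq:fraction5} with $m=k$ to bound the density of starting positions blocked by the $4k-2$ non-$a_k$ level-$k$ blocks of $a_{k+1}a_{k+1}$, and~\eqref{eq:sum} to get $\varepsilon_k<3^{-k}$, we obtain
\[
\frac{\mathcal{N}(a_k,a_{k+1}a_{k+1})}{|a_{k+1}a_{k+1}|}\ \ge\ \frac{1}{p_k}-\frac{(2k-1)\varepsilon_k}{L_k}\ >\ \frac{1}{p_k}-\frac{2k-1}{3^kL_k}.
\]

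On the other hand, $\mathcal{N}(a_k,u)<(4k-2)L_k$ for every such $u$: it is $0$ when $u$ is periodic with base word $\ne a_k$ (by the length-$L_k$ subword observation above and~\eqref{claim:new}), and when $u=b_{k+1}$, since $a_k$ does not occur in $b_kb_k$, each occurrence of $a_k$ must overlap one of the $2k-1$ non-$b_k$ level-$k$ blocks of $b_{k+1}$, giving fewer than $(2k-1)(2L_k-1)<(4k-2)L_k$ of them. If $u$ were a subword of $a_{k+1}a_{k+1}$ then, as $|u|=|a_{k+1}|$, we could write $a_{k+1}a_{k+1}=xuy$ with $|x|+|y|=|a_{k+1}|$, and, exactly as in the proof of Proposition~\ref{prop:level2}, the occurrences of $a_k$ not lying entirely inside $u$ fill two intervals of starting positions of total length $|x|+|y|$, each of density at most $1/p_k$, so
\[
\frac{\mathcal{N}(a_k,a_{k+1}a_{k+1})}{|a_{k+1}a_{k+1}|}\ \le\ \frac{1}{2p_k}+\frac{(4k-2)L_k}{2|a_{k+1}|}\ <\ \frac{1}{p_k}-\frac{2k-1}{3^kL_k},
\]
the last inequality being precisely~\eqref{eq:inductive-bound}. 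This contradicts the previous display, so $u$ is not a subword of $vv$, completing the induction. The routine ingredients are the length-$L_k$ subword observation and the bookkeeping of blocked starting positions; the one genuinely delicate point is the collision of the two displayed bounds in the main case, which is exactly what dictates the shapes of~\eqref{eq:sum},~\eqref{eq:fraction5}, and~\eqref{eq:inductive-bound}, so the work reduces to checking these are applied with the correct constants ($2k-1$ non-$a_k$ level-$k$ blocks per copy of $a_{k+1}$, hence $4k-2$ in the doubled word).
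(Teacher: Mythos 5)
Your proposal is correct and follows essentially the same route as the paper: the periodic cases via~\eqref{claim:new} and the fact that any length-$|a_k|$ window of a power of a level-$k$ word already lies in its square, the $u=a_k^{M}$ case via the maximal run of consecutive $a_k$'s, and the main density case via the lower bound~\eqref{eq:estimate-k} against the upper bound from $a_{k+1}a_{k+1}=xuy$, $\mathcal{N}(a_k,u)\le(4k-2)|a_k|$, and~\eqref{eq:inductive-bound} with~\eqref{eq:sum}. Your uniform handling of all periodic $v$ merely streamlines the paper's first two cases, and your constants agree with the paper's (the paper's $u=w_{(2k-2,k+1)}$ in that subcase is a typo for $w_{(2k-1,k+1)}$, which you treat correctly).
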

\begin{proof}
As on level three, fixing some  $v\in\mathcal{W}_{k+1}$,  we argue that for all $u\in\mathcal{W}_{k+1}\setminus\{v\}$, the statement holds.  Again, we split the argument into cases depending on the type of the word $v$.

First suppose that $v\in\{w_{(i,k+1)}\colon1\leq i\leq2k-2\}$ for some $1\leq i\leq2k-2$.  We start by considering the case that $u\in\{w_{(2k-1,k+1)},w_{(2k,k+1)},a_{k+1},b_{k+1}\}$.  By~\eqref{claim:new}, neither $a_k$ nor $b_k$ occurs as a subword of $w_{(i,k)}w_{(i,k)}$,
and by construction, at least one of $a_k$ or $b_k$ occurs as a subword of $u$.  Since $vv=w_{(i,k+1)}w_{(i,k+1)}$ is the self-concatenation of a large number of copies of $w_{(i,k)}$, $u$ cannot be a subword of $vv$.  Next consider when $u=w_{(j,k+1)}$ for some
$j\in\{1,2,\dots,2k-2\}$ with $j\neq i$.  Again by~\eqref{claim:new}, $w_{(i,k)}$ is not a subword of $w_{(j,k)}w_{(j,k)}$.  But $w_{(i,k+1)}$ is the concatenation of many copies of $w_{(i,k)}$ and $w_{(j,k+1)}$ is the concatenation of many copies of $w_{(j,k)}$, and so an occurrence of $u$ in $vv$ forces an occurrence of $w_{(i,k)}$ in the word $w_{(j,k)}w_{(j,k)}$, a contradiction.  Thus $u$ cannot occur as a subword of $vv$.  This establishes the statement for $v\in\{w_{(i,k+1)}\colon1\leq i\leq2k-2\}$.

Next assume $v=w_{(i,k+1)}$ for some $i\in\{2k-1,2k\}$.  We first consider when $u\in\{w_{(j,k+1)}\colon1\leq j\leq2k-2\}\cup\{a_{k+1},b_{k+1}\}$.  By~\eqref{claim:new}, none of the words in the set $\{w_{(j,k)}\colon1\leq j\leq2k-2\}$ occurs as a subword of $a_ka_k$ or of $b_kb_k$.  But by construction, the word $w_{(i,k+1)}w_{(i,k+1)}$ is the self-concatenation of a large number of copies of $a_k$ or $b_k$, and  we know that at least one of the words in $\{w_{(j,k)}\colon1\leq j\leq2k-2\}$ occurs as a subword of $u$.  Therefore $u$ cannot occur as a subword of $vv$.  Now instead assume $j\in\{2k-1,2k\}$ with $j\neq i$.  This time, one of the words $w_{(i,k+1)}$ and $w_{(j,k+1)}$ is the concatenation of many copies of $a_k$ and the other is the concatenation of many copies of $b_k$.  By~\eqref{claim:new}, $a_k$ is not a subword of $b_kb_k$ and $b_k$ is not a subword of $a_ka_k$.  Thus $u$ cannot be a subword of $vv$, proving the statement for $v\in\{w_{(2k-1,k+1)},w_{(2k-2,k+1)}\}$.

We are left with checking the case that
$v\in\{a_{k+1},b_{k+1}\}$.
As on level three, we only include the argument when  $v=a_{k+1}$, as the other case is similar.  As before, there are two possibilities and we treat them separately.  First suppose $u\in\{w_{(j,k+1)}\colon1\leq j\leq2k-2\}\cup\{w_{(2k,k+1)},b_{k+1}\}$.  By~\eqref{eq:fraction5}, for any $w\in\mathcal{W}_k\setminus\{a_k\}$ we have that
$$
\frac{\mathcal{N}(w,a_{k+1}a_{k+1})}{|a_{k+1}a_{k+1}|}<\frac{\varepsilon_k}{|w|(2|w|-1)}.
$$
For each particular occurrence of $w$ in $a_{k+1}a_{k+1}$, there are $2|w|-1$ subwords of length $|w|$ that partially overlap this occurrence.  This means that
$$
\frac{\text{\# of length $|w|$ subwords of $a_{k+1}a_{k+1}$ that partially overlap an occurrence of $w$}}{|a_{k+1}a_{k+1}|}<\frac{\varepsilon_k}{|w|}.
$$
Recall that $p_k$ is the minimal period of the word $a_ka_k$.  Note that $p_k\leq|a_k|$.  Since $|\mathcal{W}_k\setminus\{a_k\}|=2k-1$ and all words in $\mathcal{W}_k$ have the same length, we deduce from~\eqref{eq:sum} that
\begin{equation}\label{eq:estimate-k}
\frac{\mathcal{N}(a_k,a_{k+1}a_{k+1})}{|a_{k+1}a_{k+1}|}\geq\frac{1}{p_k}-\frac{(2k-1)\varepsilon_k}{|a_k|}.
\end{equation}
On the other hand, by~\eqref{claim:new}, $a_k$ does not occur as a subword of $w_{(i,k+1)}$ for any $k\leq2k-2$ (because it is not a subword of $w_{(i,k)}w_{(i,k)}$), $a_k$ does not occur as a subword of $w_{(2k,k+1)}$ (because it is not a subword of $b_kb_k$), and $a_k$ occurs as a subword of $b_{k+1}$ at most $(4k-2)\cdot|a_k|$ times (because it does not occur as a subword of $b_kb_k$
and so all occurrences have to partially overlap $b_k$ and some $w_{(i,k)}$ or partially overlap the copy of $a_k$ that appears in the definition of $b_{k+1}$).  Therefore,
$$
\mathcal{N}(a_k,u)\leq(4k-2)\cdot|a_k|.
$$
If $u$ occurred as a subword of $a_{k+1}a_{k+1}$, such an occurrence would account for half of the letters in $a_{k+1}a_{k+1}$ and we could write
$$
a_{k+1}a_{k+1}=xuy
$$
where $|x|+|y|=|a_{k+1}a_{k+1}|/2$.  Any occurrence of $a_k$ in $a_{k+1}a_{k+1}$ would either be entirely inside of $u$, entirely inside of $x$, entirely inside of $y$, or partially overlap $x$ (or $y$) and $u$.  Any occurrence of $a_k$ that is not entirely inside $u$ must either start in $x$ or end in $y$ (but never both because $|u|>|a_k|$), so there are at most $|a_{k+1}a_{k+1}|/2$ locations where $a_k$ could occur that are not entirely inside $u$ and the density of occurrences of $a_k$ in concatenated copies of $a_ka_k$ is $1/p_k$.  Thus we can estimate the relation between the number of occurrences and the length:
\begin{eqnarray*}
\frac{\mathcal{N}(a_k,a_{k+1}a_{k+1})}{|a_{k+1}a_{k+1}|}&\leq&\frac{\text{$\#$ times $a_k$ occurs not entirely in $u$}}{|a_{k+1}a_{k+1}|}+\frac{\text{$\#$ times $a_k$ occurs in $u$}}{|a_{k+1}a_{k+1}|} \\
&\leq&\frac{1}{2p_k}+\frac{(4k-2)\cdot|a_k|}{|a_{k+1}a_{k+1}|} \\
&<&\frac{1}{2p_k}+(2k-1)\cdot\frac{|a_k|}{|a_{k+1}|}.
\end{eqnarray*}
By equation~\eqref{eq:inductive-bound}, we have that $\frac{|a_k|}{|a_{k+1}|}<\frac{1}{(4k-2)p_k}-\frac{1}{3^k\cdot|a_k|}$ and so $\frac{\mathcal{N}(a_k,a_{k+1}a_{k+1})}{|a_{k+1}a_{k+1}|}<\frac{1}{p_k}-\frac{2k-1}{3^k|a_k|}$.  On the other hand, by equation~\eqref{eq:sum}, we have $\varepsilon_k<\sum_{m=k}^{\infty}\varepsilon_m<\frac{1}{3^k}$ and so, by equation~\eqref{eq:estimate-k}, we also have $\frac{\mathcal{N}(a_k,a_{k+1}a_{k+1})}{|a_{k+1}a_{k+1}|}\geq\frac{1}{p_k}-\frac{2k-1}{3^k|a_k|}$, a contradiction.  Therefore $u$ cannot occur as a subword of $vv=a_{k+1}a_{k+1}$.  The last possiblity we need to consider is when $u=w_{(2k-2,k+1)}$.  Then $u$ is the concatenation of $(2k+1)n_{k+1}+2(k+1)$ copies of $a_k$.  But the largest power $m$ such that $a_k^m$ occurs as a subword of $a_{k+1}a_{k+1}$ satisfies $m\leq2n_{k+1}+2$,  because any larger power would force an occurrence of and element of $\mathcal{W}_k\setminus\{a_k\}$ (that occurs in $a_{k+1}a_{k+1}$) as a subword of $a_k^m$, contradicting~\eqref{claim:new}.  Thus we have the statement for $v=a_{k+1}$,
and the argument for $v=b_{k+1}$ is similar, reversing the roles played by $a_k$ and $b_k$.
\end{proof}

\subsubsection{Construction of the shift $(X, \sigma)$}
\label{sec:def-of-Z-system}
For all $k>2$,
note
that $a_{k+1}$ begins and ends with the word $a_k$.  Therefore there is a unique $x\in\{0,1\}^{\Z}$ such that for all $k>2$ we have
$$
x_{\scriptscriptstyle{1}}x_{\scriptscriptstyle{2}}\dots x_{\scriptscriptstyle{|a_k|}}=x_{\scriptscriptstyle{-|a_k|+1}}x_{\scriptscriptstyle{-|a_k|+2}}\dots x_{\scriptscriptstyle{0}}=a_k.
$$
Set $X$ to be the orbit closure of $x$ in $\{0,1\}^{\Z}$ and for the remainder of this section, we study the properties of the shift $(X, \sigma)$.

\begin{proposition}
\label{prop:structure}
If $k>2$, any $y\in X$ can be written as a bi-infinite concatenation of words constructed on level $k$.  In the special case that $y=x$, more holds: whenever $n\equiv1\pmod{|a_k|}$ the subword $x_{n}x_{n+1}\dots x_{n+|a_k|-1}$ is a level-$k$ word.

Furthermore, if $n\equiv1\pmod{|a_k|}$ and $x_n x_{n+1}\dots x_{n+2|a_k|-1}=uv$ is the concatenation of two level-$k$ words, then either $u=v$ or $uv$ is one of the following:
	\begin{enumerate}
	\item $a_kw_{(i,k)}$ or $w_{(i,k)}a_k$ for some $1\leq i\leq2k-2$; \label{c1}
	\item $b_kw_{(i,k)}$ or $w_{(i,k)}b_k$ for some $1\leq i\leq2k-2$; \label{c2}
	\item $a_kb_k$; \label{c3}
	\item $b_ka_k$. \label{c4}
	\end{enumerate}

Moreover, for any $y\in X$ there exists $0\leq r<|a_k|$ and a sequence $\{n_m\}_{m=1}^{\infty}$ such that $n_m\equiv r\pmod{|a_k|}$ for all $m\in \N$ and $\lim_{m\to\infty}\sigma^{n_m}x=y$.  With this value of $r$ fixed, whenever $n\equiv r+1\pmod{|a_k|}$ then $y_ny_{n+1}\dots y_{n+2|a_k|-1}=uv$ is a concatenation of two level-$k$ words and either $u=v$ or $uv$ is one of the forms listed in~\eqref{c1}, \eqref{c2}, \eqref{c3}, \eqref{c4}.
\end{proposition}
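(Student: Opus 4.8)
The plan is to reduce everything to one structural lemma about the words of the construction. Write $L_k=|a_k|$; every word in $\mathcal{W}_k$ has length $L_k$, and $L_j$ is a multiple of $L_k$ for every $j\ge k$ since each higher-level word is a concatenation of level-$k$ words. Call a word $W$ (of length a multiple of $L_k$) \emph{$k$-admissible} if, when cut into consecutive blocks of length $L_k$ starting from the left, it is a concatenation of level-$k$ words in which every consecutive pair of blocks is either equal or one of the four forms \eqref{c1}--\eqref{c4}, and in addition its first and last blocks coincide; denote this common block by $\beta_k(W)\in\mathcal{W}_k$. First I would prove, by induction on $j\ge k$, that every level-$j$ word is $k$-admissible. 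The base case $j=k$ is vacuous.

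For the inductive step $j\to j+1$: by the recursions~\ref{item:firstk+1}--\ref{item:lastk+1}, each level-$(j+1)$ word is a concatenation of level-$j$ words, so its length-$L_k$ block decomposition is the concatenation of those of its constituents---each $k$-admissible by the induction hypothesis---together with one new pair of level-$k$ blocks at each junction between consecutive level-$j$ words. Reading the formulas directly, the consecutive pairs $(U,V)$ of level-$j$ words that actually occur are severely limited: inside $a_{j+1}$ one gets only $(a_j,a_j)$, $(a_j,w_{(i,j)})$ and $(w_{(i,j)},a_j)$ for $1\le i\le 2j-2$, $(a_j,b_j)$ and $(b_j,a_j)$; inside $b_{j+1}$ the same list with $a_j\leftrightarrow b_j$; and inside $w_{(i,j+1)}$ only $(W',W')$, where $W'$ is the level-$j$ word being repeated. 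The junction pair of level-$k$ blocks produced by such a $(U,V)$ is $(\beta_k(U),\beta_k(V))$, and since $\beta_k(a_j)=a_k$, $\beta_k(b_j)=b_k$, and $\beta_k(w_{(i,j)})\in\mathcal{W}_k$, every such pair is manifestly admissible: $a_k$ or $b_k$ may be followed (or preceded) by any element of $\mathcal{W}_k$, while a periodic block $w_{(\ell,k)}$ is only paired with itself, $a_k$ or $b_k$---which causes no problem because a periodic word $w_{(i,j)}$ is always flanked by $a_j$ or $b_j$ in the formulas, never by another periodic word. Finally, each level-$(j+1)$ word begins and ends with the same level-$j$ word, so $\beta_k$ of its first and last blocks agree; this closes the induction.

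I would then read off the four assertions. The defining property of $x$ gives $x_{-L_{k'}+1}\cdots x_0=x_1\cdots x_{L_{k'}}=a_{k'}$ for every $k'>2$, hence $x$ restricted to $[-L_{k'}+1,L_{k'}]$ equals $a_{k'}a_{k'}$, which is again $k$-admissible (its one internal junction pair is $(a_k,a_k)$). Because $L_{k'}$ is a multiple of $L_k$, the level-$k$ block boundaries of this $a_{k'}a_{k'}$ are precisely the positions $\equiv 1\pmod{L_k}$ in the window; letting $k'\to\infty$ shows that for every $n\equiv 1\pmod{L_k}$ the block $x_n\cdots x_{n+L_k-1}$ is a level-$k$ word (the second assertion) and $x_n\cdots x_{n+2L_k-1}$ is a concatenation $uv$ of two level-$k$ words that is either $u=v$ or one of \eqref{c1}--\eqref{c4} (the third assertion). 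For general $y\in X$, write $y=\lim_m\sigma^{n_m}x$ and pass to a subsequence along which $n_m\equiv r\pmod{L_k}$ for a fixed $r$. Whenever $n+n_m\equiv 1\pmod{L_k}$---a single residue class mod $L_k$, determined by $r$---convergence in the product topology gives $y_n\cdots y_{n+2L_k-1}=x_{n+n_m}\cdots x_{n+n_m+2L_k-1}$ for all large $m$; finiteness of $\mathcal{W}_k$ then yields the first assertion, and the third assertion applied to this aligned window of $x$ yields the last assertion.

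The only real work is the combinatorial bookkeeping in the inductive step---enumerating, straight from~\ref{item:firstk+1}--\ref{item:lastk+1}, the consecutive level-$j$ word pairs that occur in each type of level-$(j+1)$ word, and checking that a periodic word is never adjacent to another periodic word there, so that the forbidden pair $(w_{(\ell,k)},w_{(\ell',k)})$ with $\ell\ne\ell'$ never appears. Notably, no frequency estimate and no use of Proposition~\ref{prop:levelkplus1} is required for this proposition; the argument is purely structural. A little extra care is needed with the two-sided bookkeeping of $x$ around the origin (which is why the relevant object is $a_{k'}a_{k'}$ rather than a single $a_{k'}$), and one should observe that we are asserting only the existence of the stated level-$k$ block decomposition of $y$ with the indicated alignment, not its uniqueness.
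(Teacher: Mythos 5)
Your proposal is correct and follows essentially the same route as the paper: a structural induction over the levels showing that the parse of each level-$j$ word into level-$k$ blocks produces only equal adjacent pairs or the four listed junction types, then transferring to $x$ via the central windows $a_{k'}a_{k'}$ and to a general $y$ by passing to a subsequence of shifts with a fixed residue modulo $|a_k|$. Your explicit first-block/last-block ($\beta_k$) bookkeeping merely spells out the junction analysis that the paper's induction uses implicitly, so there is no gap.
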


\begin{proof}
It suffices to prove this for the transitive point $x$ used to define $(X, \sigma)$, as then the result for general $y\in X$ follows by noting that every level-$k$ word has the same length and $y$ is in the orbit closure of $x$.

By construction, all level $k$ words have the same length and for any $s>0$, $a_{k+s}$ is a concatenation of level-$(k+s-1)$ words.  Therefore, $a_{k+s}$ is a concatenation of level-$k$ words.
For each $s$, the word $x_{-|a_{k+s}|+1}\dots x_{|a_{k+s}|}=a_{k+s}a_{k+s}$ and so $x_{-|a_{k+s}|+1}\dots x_{|a_{k+s}|}$ can be written as a concatenation of level-$k$ words and $x_nx_{n+1}\dots x_{n+|a_k|-1}$ is a level-$k$ word for all $n\equiv1\pmod{|a_k|}$ with $-|a_{k+s}|<n\leq|a_{k+s}|$.
This holds for all $s$, and so $x$ can be written as a concatenation of level-$k$ words and $x_nx_{n+1}\dots x_{n+|a_k|-1}$ is such a word whenever $n\equiv1\pmod{|a_k|}$.

We next turn to the second statement on the form of concatenated level-$k$ words. We consider a level-$(k+s)$ word $w$ and proceed by induction on $s>0$.  It suffices to show that when $w$ is written as a concatenation of level-$k$ words and $uv$ is the concatenation of two adjacent level-$k$ words that occur when $w$ is parsed in this way, then either $u=v$ or $uv$ one of forms given in~\eqref{c1}--\eqref{c4}.
By the definition of the level-$(k+1)$ words, the result clearly holds for $s=1$.  For $s=2$, this follows immediately by writing  the level-$2$ words as concatenations of level-$1$ words.  Inductively, assume
this holds for all level-$(k+s)$ words for some $s>1$, and we consider the level-$(k+s+1)$ words.  When $1\leq i\leq2(k+s+1)-4$, the word $w_{(i,k+s+1)}$ is the self-concatenation of $w_{(i,k+s)}$ a large number of times.  But, since $s>1$, $w_{(i,k+s)}$ is itself a self-concatenation of level-$(k+s-1)$ words, this means that any two adjacent level-$k$ words that occur in $w_{(i,k+s)}w_{(i,k+s)}$ already occurred within $w_{(i,k+s-1)}$ (if $i\leq2(k+s)-3$) or within $a_{k+s-1}$ or $b_{k+s-1}$.  By induction, the result holds for all $s\geq 1$.

For the final statement of this proposition, note that if $y\in X$, then $y\in\overline{\mathcal{O}(x)}$ and so exists a sequence $\{n_m\}_{m=1}^{\infty}$ such that $\lim_{m\to\infty}\sigma^{n_m}x=y$.  Without loss of generality, passing to a subsequence if necessary, we can assume that there exists $0\leq r<|a_k|$ such that $n_m\equiv r\pmod{|a_k|}$ for all $m$.  So whenever $n\equiv r+1\pmod{|a_k|}$ then $y_n y_{n+1}\dots y_{n+2|a_k|-1}$ is the same as a subword of $x$ that begins at some location congruent to $1\pmod{|a_k|}$.
\end{proof}


\begin{proposition}
\label{prop:Z-CAM}
The system $(X, \sigma)$ is a CAM subshift.
\end{proposition}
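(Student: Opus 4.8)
The plan is to verify the three conditions in Definition~\ref{def:CAM}. Since $(X,\sigma)$ is a subshift it is expansive, so by Proposition~\ref{prop:weak-to-strong} it suffices to show that $(X,\sigma)$ is \emph{weakly} CAM. Transitivity is free: $X$ is the orbit closure of $x$, so $x$ has dense orbit and the system is point transitive. For faithfulness I would first record that $X$ is infinite. Indeed, for every $m\ge 3$ the periodic point $a_m^{\infty}$ lies in $X$, because arbitrarily long powers $a_m^{j}$ occur in $x$: the word $a_m^{(2m+1)n_{m+1}+2m}=w_{(2m-1,m+1)}$ occurs inside $a_{m+2}$ (hence inside $x$), and still longer powers occur deeper in the hierarchy via $w_{(2m-1,m+s)}$, so every finite subword of $a_m^{\infty}$ lies in $\CL(X)$. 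These points are pairwise distinct, since by~\eqref{claim:new} the word $b_m$ is not a subword of $a_m a_m$, so $a_m^{\infty}$ omits $b_m$ while $a_{m'}^{\infty}$ contains it whenever $m'>m$. As $X$ is an infinite subshift, Lemma~\ref{lemma:expansive} provides an aperiodic point, hence $\sigma^{n}\neq\id$ for all $n\neq 0$ and the action is faithful.

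For the density of periodic points I would again use the points $a_m^{\infty}\in X$. Any $w\in\CL(X)$ is a subword of $x$, and since $x_{-|a_m|+1}\cdots x_{|a_m|}=a_m a_m$, for all large $m$ the word $w$ is a subword of $a_m a_m$, hence of $a_m^{\infty}$; thus a suitable shift of some $a_m^{\infty}$ agrees with any prescribed central window of any $y\in X$, so the periodic points are dense in $X$.

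The heart of the matter is the third condition: every orbit is finite or dense, i.e. (for a subshift) every non-periodic $y\in X$ has dense orbit, equivalently every word of $\CL(X)$ occurs in $y$. Here I would fix $k>2$ and parse $y$ into level-$k$ words using Proposition~\ref{prop:structure}: there is an offset $r$ so that at each $n\equiv r+1\pmod{|a_k|}$ the block $y_{n}\cdots y_{n+2|a_k|-1}$ is a concatenation $uv$ of two level-$k$ words with either $u=v$ or $uv$ of one of the forms~\eqref{c1}--\eqref{c4}. If $u=v$ at every such boundary then $y$ is a shift of $u^{\infty}$ and hence periodic, contrary to assumption; so $u\neq v$ at some boundary, and since each of~\eqref{c1}--\eqref{c4} has $a_k$ or $b_k$ as one of its two factors, $a_k$ or $b_k$ occurs as a subword of $y$. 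From the recursive definitions, each of $a_k$ and $b_k$ contains every word of $\CW_{k-1}$ as a subword, so every level-$(k-1)$ word occurs in $y$. Since $k>2$ was arbitrary, every level-$j$ word occurs in $y$ for every $j$; in particular $a_j a_j$, being a subword of the level-$(j+1)$ word $a_{j+1}$, occurs in $y$ for all $j$, and as each $w\in\CL(X)$ is a subword of $a_j a_j$ for $j$ large, every word of $\CL(X)$ occurs in $y$. Thus $y$ has dense orbit, and, having verified all three conditions, $(X,\sigma)$ is weakly CAM and hence CAM by Proposition~\ref{prop:weak-to-strong}.

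The only genuinely substantial input is the rigidity of the level-$k$ parsing together with the short list~\eqref{c1}--\eqref{c4} of admissible adjacent pairs, which is exactly the content of Proposition~\ref{prop:structure} and is already in hand. Granting that, I expect the remaining delicate points to be merely that non-periodicity forces a non-constant parse at \emph{every} level (not just at one), and that $a_k$ and $b_k$ genuinely exhaust $\CW_{k-1}$ as subwords --- both immediate from the construction.
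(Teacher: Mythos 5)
Your proof is correct and follows essentially the same route as the paper: reduce to weakly CAM via expansiveness and Proposition~\ref{prop:weak-to-strong}, get dense periodic points from the bi-infinite concatenations of the $a_k$'s (which lie in $X$ because the words $w_{(2k-1,k+s)}$ are long powers of $a_k$), and use the level-$k$ parsing and adjacency list of Proposition~\ref{prop:structure} to show non-periodic points have dense orbit. The only (harmless) difference is one of direction: you argue directly that non-periodicity forces a non-constant parse at every level, so $a_k$ or $b_k$ (and hence every word of $\mathcal{W}_{k-1}$) occurs in $y$, whereas the paper runs the same implication as a contradiction (a missing $a_ka_k$ excludes the level-$(k+2)$ words containing it, forcing a constant parse and hence periodicity); you also spell out faithfulness and transitivity, which the paper leaves implicit.
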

\begin{proof}
Let $x\in X$ denote the transitive point  used to define the system.
We start by showing that  the periodic points are dense in $X$.  For any $k>2$, the word $a_ka_k=x_{-|a_k|+1}\dots x_{|a_k|}$ and so the orbit of any periodic point that contains $a_ka_k$ as a subword is  within distance $1/2^{|a_k|+1}<1/2^{2^{k-1}+1}$ of the point $x$.  For each fixed $k$ and any $s>0$, the word $w_{(2k-1,k+s)}$ is the self-concatenation of many copies of $a_k$.  Therefore there is a periodic point $p_k\in X$ that is the bi-infinite self-concatenation of copies of $a_k$.  It follows that $x=\lim_{k\to\infty} p_k$.  Since $x$ is a transitive point and is approximated arbitrarily well by periodic points, it then follows that the periodic points are dense in $X$.

We next check that if $y\in X$ is not periodic, then it has a dense orbit.  If, for all $k>2$, the word $a_ka_k$ occurs as a subword of $y$, then the orbit of $y$ is dense since points in its orbit would approximate $x$ arbitrarily well.
We proceed by contradiction, assuming the existence of some $y\in X$ that is not periodic and does not have a dense orbit.  Thus we can assume that there is some $k> 2$ such that $a_ka_k$ does not occur as a subword of $y$.

The word $a_{k+1}$ does not occur as a subword of $y$, because this would force an occurrence of $a_ka_k$ in $y$.  By Proposition~\ref{prop:structure}, $y$ can be written as a bi-infinite concatenation of words constructed on level $k+2$ and, therefore, can be written as a bi-infinite concatenation of $w_{(1,k+2)},\dots,w_{(2k-2,k+2)}$ as well as $w_{(2k,k+2)}$ and $w_{(2k+2,k+2)}$.  We omit the words $w_{(2k-1,k+2)}, w_{(2k+1,k+2)}$, $a_{k+2}$, and $b_{k+2}$ from the list of possible level $k+2$ words because:
	\begin{itemize}
	\item $w_{(2k-1,k+2)}$ is the self-concatenation of many copies of $a_k$;
	\item $w_{(2k+1,k+2)}$ is the self-concatenation of many copies of $a_{k+1}$, which contains $a_ka_k$ as a subword;
	\item $a_{k+2}$ has $a_{k+1}$ as a subword and $a_{k+1}$ has $a_ka_k$ as a subword;
	\item $b_{k+2}$ has $a_{k+1}$ as a subword and $a_{k+1}$ has $a_ka_k$ as a subword.
	\end{itemize}
Again applying to Proposition~\ref{prop:structure} to the words constructed on level $k+2$, whenever $uv$ are two adjacent level $k+2$ words that occur in $x$, we must have $u=v$ since none of the other forms listed in Proposition~\ref{prop:structure}
can be written only using the words $w_{(1,k+2)},\dots,w_{(2k-2,k+2)}$ and $w_{(2k+2,k+2)}$.  But then it follows that $y$ is periodic, a contradiction.
Thus we have shown that $(X, \sigma)$ is weakly CAM and is expansive, and so by Proposition~\ref{prop:weak-to-strong},
the system $(X, \sigma)$ is CAM.
\end{proof}

\subsection{Invariant measures on the system}
\label{sec:measures}
We continue letting $(X, \sigma)$ denote the system defined in Section~\ref{sec:def-of-Z-system}.
\begin{proposition}
\label{prop:two-measure}
The system $(X, \sigma)$ supports two nonatomic ergodic measures.
\end{proposition}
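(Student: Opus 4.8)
Both measures will come from the density words, exploiting that $a_k$ is overwhelmingly made of $1$'s while $b_k$ is overwhelmingly made of $0$'s. The plan is first to show that for every finite word $w$ the frequency $\mu_a([w]):=\lim_{k\to\infty}\mathcal N(w,a_k)/|a_k|$ exists, and symmetrically $\mu_b([w]):=\lim_{k\to\infty}\mathcal N(w,b_k)/|b_k|$. The point is that $a_{k+1}$ is a concatenation of $(2k+1)n_{k+1}+1$ copies of $a_k$ together with only $2k-1$ other level-$k$ words, all of the common length $|a_k|$; counting occurrences of $w$ block by block, and noting that at most $|w|-1$ occurrences straddle each of the $O(kn_{k+1})$ block boundaries, one obtains
\[
\left|\frac{\mathcal N(w,a_{k+1})}{|a_{k+1}|}-\frac{\mathcal N(w,a_k)}{|a_k|}\right|=O\!\left(\frac{|w|}{|a_k|}+\frac{k\,|a_k|}{|a_{k+1}|}\right),
\]
and since \eqref{eq:inductive-bound} forces $|a_k|/|a_{k+1}|<1/((4k-2)p_k)<1/|a_{k-1}|$ these differences are summable, so the sequence is Cauchy. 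The resulting frequencies are consistent (appending or prepending a symbol changes a count by $O(1)$), so by the Kolmogorov extension theorem they define shift-invariant Borel probability measures $\mu_a$ and $\mu_b$; every word of positive measure occurs in some $a_k$ hence in the transitive point $x$, so $\mu_a$ and $\mu_b$ are supported on $X$. Equivalently, $\mu_a$ is the weak-$*$ limit of the normalised counting measures $\nu_k$ on the finite orbit of $\tilde a_k:=a_k^{\mathbb Z}$. By construction (taking $u=0$ in \eqref{eq:fraction5} and $u=1$ in \eqref{eq:fraction6}) we have $\mathcal N(0,a_k)/|a_k|<\sum_{j<k}\varepsilon_j$ and $\mathcal N(1,b_k)/|b_k|<\sum_{j<k}\varepsilon_j$, so $\mu_a([0])\le\sum_{j\ge1}\varepsilon_j<1/3$ while $\mu_b([0])=1-\mu_b([1])>2/3$.

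The heart of the proof is that $\mu_a$ and $\mu_b$ are nonatomic. An atom of an invariant probability measure on $X$ must be a periodic point (an infinite orbit would carry infinitely many atoms of equal mass), so it suffices to prove $\mu_a(\mathrm{Per})=\mu_b(\mathrm{Per})=0$, where $\mathrm{Per}$ is the countable set of periodic points. Fix $\ell\ge1$; writing $\mathrm{Fix}(\sigma^\ell)=\bigcap_NC_N$ with $C_N:=\{y:y_i=y_{i+\ell}\text{ for }|i|\le N\}$ clopen gives $\mu_a(\mathrm{Fix}(\sigma^\ell))=\lim_N\lim_k\nu_k(C_N)$, and a short computation shows $\nu_k(C_N)$ is at most the fraction of $a_k$ covered by period-$\ell$ runs of length at least $2N$. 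Everything thus reduces to the estimate
\[
\lim_{N\to\infty}\ \limsup_{k\to\infty}\ \frac{1}{|a_k|}\bigl(\text{length of }a_k\text{ covered by period-}\ell\text{ runs of length}\ge N\bigr)=0 .
\]
To prove this one separates constant runs from non-constant period-$\ell$ runs. By the recursive definition the long constant runs of $a_k$ lie in bounded neighbourhoods of the all-$0$ blocks $w_{(1,m)}=0^{|a_m|}$ and all-$1$ blocks $w_{(2,m)}=1^{|a_m|}$; each such level-$m$ block occurs in $a_k$ with controlled multiplicity and so contributes a fraction $\lesssim|a_m|/|a_{m+1}|<1/|a_{m-1}|$, using \eqref{eq:inductive-bound} together with $p_m>|a_{m-1}|$ (valid since $a_m$ contains the all-$0$ word $w_{(1,m-1)}=0^{|a_{m-1}|}$ yet is not constant), and summing over the levels $m$ large enough to produce a run of length $\ge N$ gives a bound tending to $0$. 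For a non-constant period-$\ell$ run of length $\ge N$, Propositions~\ref{prop:level2} and~\ref{prop:levelkplus1}—no level-$k$ word occurs inside the square of a different level-$k$ word—force it to lie, up to bounded boundary extensions, inside a single block $w_{(2j-1,m)}$ or $w_{(2j,m)}$ which is a power of $a_j$ (respectively $b_j$) with $|a_j|$ dividing $\ell$; for fixed $\ell$ there are only finitely many such $j$, and the same multiplicity estimate applies. The top-level ($m=k$) terms are $O(|a_{k-1}|/|a_k|)\to0$. Hence $\mu_a(\mathrm{Fix}(\sigma^\ell))=0$ for every $\ell$, so $\mu_a(\mathrm{Per})=0$ and $\mu_a$ is nonatomic; the argument for $\mu_b$ is identical.

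Finally, since $\mu_a(\mathrm{Per})=0$, almost every ergodic component $\lambda$ of $\mu_a$ has $\lambda(\mathrm{Per})=0$ and is therefore nonatomic; since the average of $\lambda([0])$ over the ergodic decomposition equals $\mu_a([0])<1/3$, a positive-measure set of components satisfies $\lambda([0])<1/3$, so there is a nonatomic ergodic measure $\lambda_a$ on $X$ with $\lambda_a([0])<1/3$. Symmetrically $\mu_b$ yields a nonatomic ergodic $\lambda_b$ on $X$ with $\lambda_b([0])>2/3$, and then $\lambda_a\ne\lambda_b$, giving the two required measures. The main obstacle is the run-length estimate in the previous paragraph: quantifying how much of $a_k$ and $b_k$ can be covered by long periodic runs is exactly where the no-overlap structure of Propositions~\ref{prop:level1}--\ref{prop:levelkplus1} and the rapid growth of the parameters $n_k$ enforced by \eqref{eq:inductive-bound} must be combined in an induction on levels.
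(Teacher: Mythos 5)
Your overall strategy is the paper's: build two measures from the empirical frequencies of $a_k$ and of $b_k$, separate them by the measure of the cylinder $[0]$, and show that almost every ergodic component is nonatomic. Your construction of $\mu_a,\mu_b$ (the Cauchy argument, which slightly strengthens the paper's ``take a weak* limit point''), the bound $\mu_a([0])<1/3\le 2/3\le\mu_b([0])$, and the final extraction of two distinct nonatomic ergodic components are all correct and parallel the paper. The divergence, and the genuine gap, is in the nonatomicity step. Your reduction of $\mu_a(\mathrm{Fix}(\sigma^{\ell}))=0$ to the statement that the fraction of $a_k$ covered by period-$\ell$ runs of length at least $N$ tends to $0$ (as $N\to\infty$, uniformly in $k$) is fine, and that statement is in fact true, but you do not prove it: you sketch it and yourself call it ``the main obstacle.'' As written the sketch does not go through. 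For non-constant runs, the condition ``$|a_j|$ dividing $\ell$'' is not the right one; what a long run of period $\ell$ inside a power of $a_j$ forces (via a Fine--Wilf argument) is that the minimal period $p_j$ of $a_ja_j$ satisfies $p_j\le\ell$, and since $p_j>|a_{j-1}|$ only finitely many levels $j$ can contribute. More importantly, locating such a run inside the hierarchical parse of $a_k$ and bounding how much of $a_k$ the relevant blocks can cover requires the parsing/adjacency information of Proposition~\ref{prop:structure} (or, more efficiently, the occurrence bounds~\eqref{eq:fraction4}), not only the no-overlap Propositions~\ref{prop:level1}, \ref{prop:level2} and~\ref{prop:levelkplus1}; likewise the claim that each constant block ``occurs with controlled multiplicity and contributes a fraction $\lesssim|a_m|/|a_{m+1}|$'' is asserted, not derived. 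So the central estimate of your nonatomicity argument is missing.

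The gap is repairable, and the repair is essentially the paper's (much shorter) argument, which you could splice in without changing anything else. Since $\mathrm{Fix}(\sigma^{\ell})$ is finite, it suffices to show each periodic orbit is null. By Proposition~\ref{prop:structure}, every periodic point of $X$ is a shift of $u^{\mathbb{Z}}$ for some level-$j$ word $u$; for $m>j$ the periodization $\tilde u$ of $u$ of length $|a_m|$ is a level-$m$ word distinct from $a_m$ and from $b_m$, so the inductively maintained bounds~\eqref{eq:fraction4} give $\mathcal{N}(\tilde u,a_ka_k)/|a_ka_k|<\tfrac{1}{|\tilde u|(2|\tilde u|-1)}\sum_{i\ge m}\varepsilon_i$ uniformly in $k$, hence $\mu_a([u^c])\to 0$ as $c\to\infty$ and the orbit of $u^{\mathbb{Z}}$ is $\mu_a$-null; the same bound with $b$ in place of $a$ handles $\mu_b$. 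This replaces your run-covering estimate by a two-line computation and completes the proof along the lines you set up.
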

\begin{proof}
Our goal is to construct two (not necessarily ergodic) measures, $\mu_a$ and $\mu_b$, supported on $X$, and show the following:
\begin{enumerate}        \item $\mu_a\neq\mu_b$ (which we do by showing $\mu_a([0])\neq\mu_b([0])$);
    \item almost every measure in the ergodic decomposition of $\mu_a$ is nonatomic;
    \item almost every measure in the ergodic decomposition of $\mu_b$ is nonatomic.
\end{enumerate}
It follows from these facts that there exist two distinct, ergodic, nonatomic measures supported on $X$ and, therefore, that CAM systems are capable of having this property.

Let $x\in X$ be the infinite word defined by the condition that for all $k\geq 1$, we have
$$
a_k=x_1 \dots x_{|a_k|}=
x_{-|a_k|+1} \dots x_0
$$
and let $x^{\prime}\in X$ be the infinite word defined by the condition that for all $k\geq 1$,  we have
$$
b_k=x^{\prime}_1 \dots x^{\prime}_{|a_k|} =x^{\prime}_{-|a_k|+1} \dots x^{\prime}_0.
$$
For each $k\geq3$, set
$$
\nu_{k,a}:=\frac{1}{2|a_k|}\sum_{m=-|a_k|+1}^{|a_k|}\delta_{\sigma^m(x)}
$$
and set
$$
\nu_{k,b}:=\frac{1}{2|b_k|}\sum_{m=-|b_k|+1}^{|b_k|}\delta_{\sigma^m(x^{\prime})}.
$$
Let $\mu_a$ be a weak*-limit point of $\{\nu_{k,a}\}_{k=1}^{\infty}$ and let $\mu_b$ be a weak*-limit point of $\{\nu_{k,b}\}_{k=1}^{\infty}$.  Then for all $k\geq 1$, we have
$$
\nu_{k,a}([0])=\frac{\mathcal{N}(0,a_k)}{|a_k|}\leq\sum_{j=1}^k\varepsilon_j<\frac{1}{3} \hspace{0.2 in} \text{and} \hspace{0.2 in} \nu_{k,b}([1])=\frac{\mathcal{N}(1,b_k)}{|b_k|}\leq\sum_{j=1}^k\varepsilon_j<\frac{1}{3}
$$
and so in particular it follows that
$\mu_a([0])\leq\frac{1}{3}$ and $\mu_b([1])\leq\frac{1}{3}$.  Let
$$
\mu_a=\int_X\mu_a^xd\mu_a(x) \hspace{0.2 in} \text{and} \hspace{0.2 in} \mu_b=\int_X\mu_b^xd\mu_b(x)
$$
be the ergodic decompositions of $\mu_a$ and $\mu_b$, respectively.  Then we have
	\begin{eqnarray}
 \label{eq:biga}
	\mu_a\left(\left\{x\colon\mu_a^x([1])\geq\frac{2}{3}\right\}\right)&>&0; \\
 \label{eq:bigb}
	\mu_b\left(\left\{x\colon\mu_b^x([0])\geq\frac{2}{3}\right\}\right)&>&0.
	\end{eqnarray}
Let $p\in X$ be a periodic point.  By construction of $X$, there is some $k\in\mathbb{N}$ and a level-$k$ word, $u$,  such that $p$ is a shift of the word $\cdots uuuu\cdots$ (because arbitrarily long segments of $p$ have to occur in $a_{k+t}$ as $t\to\infty$).
Fix $\varepsilon>0$ and find $m>k$ such that $\sum_{j=m}^{\infty}\varepsilon_j<\varepsilon$.  Let $\tilde{u}$ be the level-$m$ word that is $|u|$-periodic and is the self-concatenation of a large number of copies of $u$.  Since $m>k$, we have that $\tilde{u}\neq a_m$ and so for all $t\geq0$,
it follows from Equation~\eqref{eq:fraction4} that
$$
\frac{\mathcal{N}(\tilde{u},a_{m+t}a_{m+t})}{|a_{m+t}a_{m+t}|}<\frac{1}{|\tilde{u}|(2|u|-1)}\sum_{j=m}^{\infty}\varepsilon_j<\frac{\varepsilon}{|\tilde{u}|(2|u|-1)}.
$$
Therefore
\[\nu_{a,m+t}([\tilde{u}])<\frac{\varepsilon}{|\tilde{u}|(2|u|-1)}+\frac{|\tilde{u}|-1}
{|a_{m+t}a_{m+t}|}
\]
for all $t\geq0$, as
the second term on the right hand side is from possible occurrences of $\tilde{u}$ in $x$ that are within distance $|\tilde{u}|$ of the right edge of the rightmost copy of $a_{m+t}$.  Thus it follows that
$\mu_a([\tilde{u}])\leq\varepsilon/|\tilde{u}|(2|u|-1)$.  Similarly, using the same argument but replacing $a_m$ by $b_m$, it follows that
$\mu_b([\tilde{u}])\leq\varepsilon/|\tilde{u}|(2|u|-1)$.  Repeating this argument for any $\varepsilon>0$, we conclude that
$$
\lim_{c\to\infty}\mu_a([u^c])=0 \hspace{0.2 in} \text{and} \hspace{0.2 in} \lim_{c\to\infty}\mu_b([u^c])=0.
$$
This implies that $\mu_a(\{x\colon\mu_a^x\text{ is atomic and concentrated on $p$}\})=0$ and, similarly, that $\mu_b(\{x\colon\mu_b^x\text{ is atomic and concentrated on $p$}\})=0$.
Since the choice of $p$ was arbitrary,
it follows that $\mu_a$-almost every ergodic component of $\mu_a$ is nonatomic and $\mu_b$-almost every ergodic component of $\mu_b$ is nonatomic.
However by~\eqref{eq:biga}, there is a set of ergodic components of $\mu_a$ of positive $\mu_a$-measure that give measure at least $2/3$ to the cylinder set $[1]$, and hence there is at least one nonatomic ergodic measure that does.  Similarly, by~\eqref{eq:bigb} there is a nonatomic ergodic component of $\mu_b$ that gives measure at most $1/3$ to the cylinder set $[1]$, and so there is at least one nonatomic ergodic measure that does.
It follows that these two nonatomic, ergodic measures are distinct.
\end{proof}

Combining Propositions~\ref{prop:Z-CAM} and~\ref{prop:two-measure}, we have thus completed the proof of Theorem~\ref{th:Z-CAM}
Some open quesitions and directions for exploration about the system defined in~\ref{sec:def-of-Z-system} are contained in Section~\ref{sec:further}.

\section{{$\Z^d$}-CAM system}
\label{sec:higher}
\subsection{The setup}
In Section~\ref{sec:Zcamsystem}, we showed the existence of a symbolic $\mathbb{Z}$-CAM system and further showed that such a system can support two distinct nonatomic, ergodic probability measures. In this section we generalize this result to $\Z^d$.
The inductive construction is similar to that in one dimension, but the added dimensions complicate the counting of ways in which configurations can overlap, and so in the initial levels of the construction we include all details.  When the proofs become close enough that changes are mainly changing notation, we omit the details.

Similar to the one dimensional case, we construct a $\Z^d$-subshift on the alphabet $\{0,1\}$, and the higher dimensional setting necessitates changes in some of the definitions.
If $\mathcal{S}\subseteq\Z^d$, we refer to $\CS$ as a {\em shape} and define an {\em $\mathcal{S}$-word} to be a function $w\colon\mathcal{S}\to\{0,1\}$.
When $w$ is an $\CS$-word, we let $[w]$ denote any function whose domain is a translate of $\mathcal{S}$ and we say  $[w]$ {\em  has the same shape} as $w$; we let
$|w|$ denote the number of integer points in $\mathcal{S}$.

For the special case that $\mathcal{S}$ is rectangular, we make use of a notion of subwords.
Assume that $\mathcal{S}=\prod_{i=1}^d\{1,2,\dots,m_i\}$ for some $m_1,\dots,m_d\in\N$, $\mathcal{T}=\prod_{i=1}^d\{1,2,\dots,m_i^{\prime}\}$ with $m_i^{\prime}\leq m_i$ for $i=1, \dots, d$, let $w\colon\mathcal{S}\to\mathcal{A}$ be an $\mathcal{S}$-word and let $v\colon\mathcal{T}\to\mathcal{A}$ be a $\mathcal{T}$-word.  We say that $v$ occurs as a {\em subword} of $w$ if there exist $k_1\leq m_1-m_1^{\prime},\dots,k_d\leq m_d-m_d^{\prime}$ such that $v(x_1,\dots,x_d)=w(x_1+k_1,\dots,x_d+k_d)$ for all $(x_1,\dots,x_d)\in\mathcal{T}$.

For fixed $d,e_1,\dots,e_d,n\geq1$ and function $w\colon\{1,2,\dots,n\}^d\to\{0,1\}$,
define the
{\em $(e_1\times e_2\times\cdots\times e_d)$-fold self-concatenation of $w$} to be the function
$$
w^{(e_1,\dots,e_d)} \colon\prod_{i=1}^d\{1,2,\dots,e_in\}\to\{0,1\}
$$
defined by
$$
w^{(e_1,\dots,e_d)}(x_1,\dots,x_d):=w\left(x_1\text{ }(\mathrm{mod}\text{ }n), x_2\text{ }(\mathrm{mod}\text{ }n),\dots,x_d\text{ }(\mathrm{mod}\text{ }n)\right).
$$
For convenience, when $d$ is
clear from the context, we write $w^{(e)}$ as shorthand for  $w^{(e,e,\dots,e)}$.

If $u_1,\dots,u_k\colon\{1,2,\dots,n\}^d\to\{0,1\}$ and $e\geq2k+4$, define
$$
\mathcal{P}[u_1,\dots,u_k\mid w,e]\colon\{1,2,\dots,(2e+1)n\}^d\to\{0,1\}
$$
to be the {\em postcard function}, where we consider $u_1,\dots,u_k$ to be the {\em stamp}  (thought of as in the lower left corner)
defined by:

\begin{enumerate}
\item[Case 1] (the stamp): if $(x_1,\dots,x_d)\in\{2mn+1,\dots,(2m+1)n\}\times\{2n+1,\dots,3n\}^{d-1}$ for some $1\leq m\leq k$,  define
$$
\mathcal{P}[u_1,\dots,u_k\mid w,e](x_1,\dots,x_d):=u_m(x_1-2mn,x_2-2n,\dots,x_d-2n).
$$

\item[Case 2] (the rest of the postcard): if $(x_1,\dots,x_d)\notin\{2mn+1,\dots,(2m+1)n\}\times\{2n+1,\dots,3n\}^{d-1}$ for any $1\leq m\leq k$, we define
$$
\mathcal{P}[u_1,\dots,u_k\mid w,e](x_1,\dots,x_d):=w^{(e)}(x_1,\dots,x_d)
$$
\end{enumerate}





\begin{figure}
\begin{tikzpicture}[font=\sffamily, scale=0.8, every node/.style={scale=0.8}]

  \tikzstyle{normal} = [rectangle, draw, fill=white, minimum width=1cm, minimum height=.7cm]
  \tikzstyle{highlight} = [rectangle, draw, fill=yellow!50, minimum width=1cm, minimum height=.7cm]

  \newcommand{\sequence}[2]{
    \ifnum#1=2 \node[highlight] at (#1*1,0) {#2}; \fi
    \ifnum#1=4 \node[highlight] at (#1*1,0) {#2}; \fi
    \ifnum#1=2\else\ifnum#1=4\else\node[normal] at (#1*1,0) {#2};\fi\fi
  }

  \foreach \x/\y in {0/$w$,1/$w$,2/$u_{1}$,3/$w$,4/$u_{2}$,5/$w$,6/$w$,7/$w$,8/$w$,9/$w$,10/$w$,11/$w$,12/$w$}{
    \sequence{\x}{\y}
  }
\end{tikzpicture}
\caption{The postcard $\mathcal{P}[u_1, u_2 \mid w, 6]\colon \{1, \dots, 13|w|\}\to\{0,1\}$}
\label{figure:1d}
\end{figure}
Examples of postcards in  dimensions $1$ and $2$ are given in Figures~\ref{figure:1d} and~\ref{figure:2d}.

We define $\mathcal{N}(u,v)$ to be the number of times $u$ occurs as a subword of $v$.  A useful feature of the postcard function is that if $u$ never occurs as a subword of $w^{(2)}$, then for all $1\leq i\leq k$ we have
$$
\lim_{e\to\infty}\frac{\mathcal{N}(u,\mathcal{P}[u_1,\dots,u_k\mid w,e])}{|\mathcal{P}[u_1,\dots,u_k\mid w,e]|}=0.
$$

\begin{figure}
\centering
\begin{tikzpicture}[font=\sffamily, scale=0.8, every node/.style={scale=0.8}]
  \tikzstyle{normal} = [rectangle, draw, fill=white, minimum width=1cm, minimum height=.7cm]
  \tikzstyle{highlight} = [rectangle, draw, fill=yellow!50, minimum width=1cm, minimum height=.7cm]

  \newcommand{\drawSequenceRow}[2]{
    \foreach \x/\y in {0/$w$,1/$w$,2/$w$,3/$w$,4/$w$,5/$w$,6/$w$,7/$w$,8/$w$}{
      \pgfmathsetmacro{\ypos}{#1*-.69} 
      \ifnum#2=1
        \ifnum\x=2 \node[highlight] at (\x*1,\ypos) {\y}; \fi
        \ifnum\x=4 \node[highlight] at (\x*1,\ypos) {\y}; \fi
        \ifnum\x=2\else\ifnum\x=4\else\node[normal] at (\x*1,\ypos) {\y};\fi\fi
      \else
        \node[normal] at (\x*1,\ypos) {\y};
      \fi
    }
  }

  \newcommand{\drawSequenceRowMod}[2]{
    \foreach \x/\y in {0/$w$,1/$w$,2/$u_{1}$,3/$w$,4/$u_{2}$,5/$w$,6/$w$,7/$w$,8/$w$}{
      \pgfmathsetmacro{\ypos}{#1*-.69} 
      \ifnum#2=1
        \ifnum\x=2 \node[highlight] at (\x*1,\ypos) {\y}; \fi
        \ifnum\x=4 \node[highlight] at (\x*1,\ypos) {\y}; \fi
        \ifnum\x=2\else\ifnum\x=4\else\node[normal] at (\x*1,\ypos) {\y};\fi\fi
      \else
        \node[normal] at (\x*1,\ypos) {\y};
      \fi
    }
  }

  \drawSequenceRow{0}{0} 
  \drawSequenceRow{1}{0} 
  \drawSequenceRow{2}{0} 
  \drawSequenceRow{3}{0} 
  \drawSequenceRow{4}{0} 
  \drawSequenceRow{5}{0} 
  \drawSequenceRowMod{6}{1} 
  \drawSequenceRow{7}{0} 
  \drawSequenceRow{8}{0} 

\end{tikzpicture}
\caption{The postcard $\mathcal{P}[u_1, u_2 \mid w, 4]\colon \{1, \dots, 9|w|\}^{2}\to\{0,1\}$}
\label{figure:2d}
\end{figure}

\subsection{The construction} This construction proceeds recursively, in levels, and is highly reminiscent of the construction of a $\Z$-CAM subshift.  The alphabet is $\mathcal{A}=\{0,1\}$, and again, all  parameters are tuned after the fact to give the resulting system its properties.  The first three levels are special and so we construct them explicitly, and the recursive formula starts with level four.  As previously, the second coordinate of a subscript always denotes the level.

\subsubsection{The frequency sequence}  Pick a sequence
$\{\varepsilon_k\}_{k=1}^{\infty}$ of non-negative real numbers that satisfy
$$
\sum_{k=N}^{\infty}\varepsilon_k<\frac{1}{3^{d+N-1}}.
$$
for all $N\in\N$.  We  use this sequence throughout our construction.

\subsubsection{Level 1 (introduction of the words $w$)} There are $2$ words on this level which are both functions from $\{1\}^d$ to $\mathcal{A}$, and we set
$$
w_{(1,1)}(1,\dots,1)=0 \quad \text{ and } \quad w_{(2,1)}(1,\dots,1)=1
$$

\subsubsection{Level 2 words (introduction of the words $a$ and $b$)} There are $4$ words on this level,
playing two distinct roles as distinguished by two types: we have periodic words $w$ and density words $a,b$.  This level also introduces a parameter $n_2>1$ that controls the densities (we  explain the meaning of this momentarily). All words at this level are functions $\{1,2,\dots,n_2\}^d\to\mathcal{A}$.  We define:
\begin{align*}
w_{(1,2)}&=  w_{(1,1)}^{(n_2)} &   \text{self-concatenation of copies of $w_{(1,1)}$} \\  \\
w_{(2,2)}&=w_{(2,1)}^{(n_2)} &  \text{self-concatenation of  copies of $w_{(2,1)}$} \\  \\
a_2&=\mathcal{P}[w_{(2,1)}|w_{(1,1)},{n_2}] & \text{
all $w_{(1,1)}$ except a single copy of $w_{(2,1)}$} \\
b_2&=\mathcal{P}[w_{(1,1)}|w_{(2,1)},{n_2}] & \text{
all $w_{(2,1)}$ except a single copy of $w_{(1,1)}$}
\end{align*}
The parameter $n_2$ is chosen sufficiently large such that $\mathcal{N}(1,a_2)/|a_2|<\varepsilon_1$ and $\mathcal{N}(0,b_2)/|b_2|<\varepsilon_1$.  In other words, $a_2$ consists almost entirely  of $0$'s and $b_2$  almost entirely  of $1$'s.  More explicitly, $a_2,b_2\colon\{1,2,\dots,n_2\}^d\to\mathcal{A}$ and the only point $(x_1,\dots,x_d)$ in the domain of $a_2$ where $a_2(x_1,\dots,x_d)=1$ is $(x_1,\dots,x_d)=(3,3,\dots,3)$.  Similarly for $b_2(x_1,\dots,x_d)=0$.

We call the four words introduced at this point the {\em level-2 words} and we write $\mathcal{W}_2=\{w_{(1,2)},w_{(2,2)},a_2,b_2\}$.
For use in verifying properties of the words constructed at higher levels, we describe the basic properties of the words constructed at level 2.
\begin{proposition}\label{propd:level1}
For all distinct words $u,v\in\mathcal{W}_2$, the word $u$ does not occur as a subword of $v^{(2)}$.
\end{proposition}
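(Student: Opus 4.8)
The plan is to verify the statement by a finite case check over the twelve ordered pairs of distinct words $u,v\in\mathcal{W}_2=\{w_{(1,2)},w_{(2,2)},a_2,b_2\}$, using that every word at this level is \emph{almost constant}: $w_{(1,2)}$ is the all-$0$ cube of side $n_2$, $w_{(2,2)}$ the all-$1$ cube of side $n_2$, $a_2$ is all $0$'s except for a single $1$ at $(3,\dots,3)$, and $b_2$ is all $1$'s except for a single $0$ at $(3,\dots,3)$. Hence $w_{(1,2)}^{(2)}$ is identically $0$, $w_{(2,2)}^{(2)}$ is identically $1$, and $a_2^{(2)}$ (resp.\ $b_2^{(2)}$) is a cube of side $2n_2$ containing exactly $2^d$ ones (resp.\ zeros), being tiled by $2^d$ copies of $a_2$ (resp.\ $b_2$). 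Most of the twelve pairs fall to one of two trivial observations, and only two require a genuine argument.

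First, when $v\in\{w_{(1,2)},w_{(2,2)}\}$ the word $v^{(2)}$ is constant, so any subword of it is constant of the same symbol; since each of the other three words of $\mathcal{W}_2$ contains the opposite symbol, none occurs in $v^{(2)}$, settling the six pairs with $v$ constant. Next, when $v\in\{a_2,b_2\}$ and $u$ contains more than $2^d$ occurrences of the minority symbol of $v^{(2)}$ --- which, for $v=a_2$, means $u\in\{w_{(2,2)},b_2\}$ (each having at least $n_2^d-1$ ones), and symmetrically for $v=b_2$ --- a cardinality count rules out an occurrence of $u$ in $v^{(2)}$, provided $n_2$ is large enough that $n_2^d-1>2^d$; this inequality may simply be added to the conditions already imposed on $n_2$ at level $2$. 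This disposes of four further pairs.

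This leaves exactly two cases: $w_{(1,2)}$ must not occur in $a_2^{(2)}$, and $w_{(2,2)}$ must not occur in $b_2^{(2)}$; they are symmetric under interchanging $0$ and $1$, so I would write out the first. By the definition of the self-concatenation, for $(x_1,\dots,x_d)$ in the domain $\{1,\dots,2n_2\}^d$ we have $a_2^{(2)}(x_1,\dots,x_d)=1$ exactly when $x_i\equiv 3\pmod{n_2}$ for every $i$, i.e.\ exactly when each $x_i\in\{3,\,3+n_2\}$. Now take any placement of a subword of shape $\{1,\dots,n_2\}^d$ inside $a_2^{(2)}$; in each coordinate direction it occupies a block of $n_2$ consecutive integers lying in $\{1,\dots,2n_2\}$, and such a block contains exactly one integer in the residue class $3\pmod{n_2}$, necessarily an element of $\{3,3+n_2\}$. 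Choosing that integer in each coordinate produces a point inside the placed subword at which $a_2^{(2)}$ takes the value $1$; hence the subword is not identically $0$, so $w_{(1,2)}$ does not occur in $a_2^{(2)}$, and the case of $w_{(2,2)}$ inside $b_2^{(2)}$ is identical with the roles of $0$ and $1$ swapped.

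The only delicate point is this last ``window'' step, and it rests on the single elementary fact that a block of $n_2$ consecutive integers meets every residue class mod $n_2$ exactly once; everything else is bookkeeping over a finite list plus the freely arrangeable inequality $n_2^d-1>2^d$. I do not expect any real obstacle here --- this proposition is merely the base case of the non-self-overlapping property that the construction preserves at every level, mirroring the one-dimensional Proposition~\ref{prop:level1}, so the point is mostly to record the template (almost-constant words together with the residue-class argument) cleanly before the higher-level counting begins.
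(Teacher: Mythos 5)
Your proof is correct, and it covers all twelve ordered pairs, but it diverges from the paper's argument at one point worth noting. The cases with $v$ constant are handled identically, and your ``window'' step (a block of $n_2$ consecutive integers inside $\{1,\dots,2n_2\}$ meets the residue class of $3$ modulo $n_2$ exactly once, so every side-$n_2$ subcube of $a_2^{(2)}$ contains a $1$) is in substance the same observation the paper packages as ``the largest constant subcube of $a_2^{(2)}$ has side $<n_2$.'' Where you genuinely differ is the cross-case ($b_2$ or $w_{(2,2)}$ inside $a_2^{(2)}$, and symmetrically for $b_2^{(2)}$): you count minority symbols exactly ($u$ contains at least $n_2^d-1$ ones while $a_2^{(2)}$ contains only $2^d$), whereas the paper runs a frequency argument, showing that an occurrence of $a_2$ in $b_2^{(2)}$ would force $1-\varepsilon_1\leq\frac{2^d-1+\varepsilon_1}{2^d}$, contradicting $\varepsilon_1<3^{-d}\leq\frac{1}{2^d+1}$. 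Your count is more elementary and is valid here because the level-$2$ words have exactly one defect; note also that the extra inequality $n_2^d-1>2^d$ you propose to impose is already automatic, since $\mathcal{N}(1,a_2)/|a_2|=1/n_2^d<\varepsilon_1<3^{-d}$ forces $n_2\geq4$. The trade-off is that the paper's density computation is deliberately the same template that gets reused at levels $3$ and beyond (where the words are only \emph{mostly} constant and exact counts are unavailable), while your exact count is specific to level $2$; so your route buys simplicity for the base case but does not rehearse the overlap-counting machinery the later inductive steps rely on.
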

\begin{proof}
First we establish the statement when $v\in\{w_{(1,2)},w_{(2,2)}\}$ and $u\in\mathcal{W}_2$ is distinct from $v$.
Neither $a_2$ nor $b_2$ occurs as a subword of $w_{(i,2)}^{(2)}$ for $i=1,2$, because $w_{(i,2)}^{(2)}$ is a constant function, whereas $a_2$ and $b_2$ are not.  The word $w_{(1,2)}$ does not occur as a subword of $w_{(2,2)}^{(2)}$, since the symbol $0$ does not occur in $w_{(2,2)}$ but does occur in $w_{(1,2)}$ and similarly $w_{(2,2)}$ does not occur as a subword of $w_{(1,2)}^{(2)}$.  This establishes the statement when $v\in\{w_{(1,2)},w_{(2,2)}\}$.

Next we establish the statement when $v\in\{a_2,b_2\}$ and $u\in\mathcal{W}_2$ is distinct from $v$.
Neither $w_{(1,2)}$ nor $w_{(2,2)}$  occurs as a subword of $a_2^{(2)}$, as the largest $m$ for which there is a constant subword of shape $\{1,2,\dots,m\}^d$ in $a_2^{(2)}$ satisfies $m<n_2$,
whereas any occurrence of $w_{(i,2)}$ forces a constant subword of
shape $\{1,2,\dots,n_2\}^d$.  Similarly, neither can occur as a subword of $b_2^{(2)}$.  We claim that the word $a_2$ does not occur as a subword of $b_2^{(2)}$ (the argument that $b_2$ does not occur as a subword of $a_2^{(2)}$ is analogous).  By construction, $\mathcal{N}(1,a_2)/|a_2|<\varepsilon_1$ and so $\mathcal{N}(1,a_2)<\varepsilon_1|a_2|$.
Therefore $\mathcal{N}(0,a_2)\geq(1-\varepsilon_1)|a_2|$.
Similarly, using the fact that $b_2^{(2)}$ consists of concatenated copies of $b_2$, we get $\mathcal{N}(1,b_2^{(2)})\geq(1-\varepsilon_1)|b_2^{(2)}|$.  If $a_2$ occurs as a subword of $b_2^{(2)}$ then, using the fact that $|a_2|=\frac{1}{2^d}|b_2^{(2)}|$, we obtain
$$
\mathcal{N}(0,b_2^{(2)})\geq\mathcal{N}(0,a_2)\geq(1-\varepsilon_1)|a_2|=\frac{1-\varepsilon_1}{2^d}|b_2^{(2)}|.
$$
This implies that $\mathcal{N}(1,b_2^{(2)})\leq\left(1-\frac{1-\varepsilon_1}{2^d}\right)|b_2^{(2)}|=\frac{2^d-1+\varepsilon_1}{2^d}|b_2^{(2)}|$.  Combined with our earlier observation that $\mathcal{N}(1,b_2^{(2)})\geq(1-\varepsilon_1)|b_2^{(2)}|$, we have that $1-\varepsilon_1\leq\frac{2^d-1+\varepsilon_1}{2^d}$.  However, this contradicts our assumption that  $\varepsilon_1<\sum_{k=1}^{\infty}\varepsilon_k<\frac{1}{3^d}\leq\frac{1}{2^d+1}$.  Therefore the statement holds when $v\in\{a_2,b_2\}$.
\end{proof}

\subsubsection{Level $3$ words (recursive definitions of periodic and density words)}
There are $6$ words on this level,  again playing distinct roles distinguished by two types: periodic words $w$ and density words $a,b$.  Again we introduce a parameter $n_3 >1 $ to control the densities.   Define the words by setting
\begin{align*}
w_{(1,3)}&=w_{(1,2)}^{(5n_3+4)}
\\ \\
w_{(2,3)}&=w_{(2,2)}^{(5n_3+4)}
\\ \\
w_{(3,3)}&=a_2^{(5n_3+4) }
\\ \\
w_{(4,3)}&=b_2^{(5n_3+4)}
\\ \\
a_3&=\mathcal{P}[w_{(1,2)},w_{(2,2)},a_2,b_2|a_2,n_3]
\\ \\
b_3&=\mathcal{P}[w_{(1,2)},w_{(2,2)},a_2,b_2|b_2,n_3],
\end{align*}
meaning that  the first four words periodize the words $w_{(1,2)}$, $w_{(2,2)}$, $a_2$, and $b_2$ (in order), the second to last word is mostly concatenated $a_2$'s with the words $\mathcal{W}_2$ as a stamp, and the last  word is mostly concatenated $b_2$'s with the words $\mathcal{W}_2$ as a stamp.
We choose $n_3$ such  that $\mathcal{N}(1,a_3)/|a_3|<\varepsilon_1+\varepsilon_2$ and $\mathcal{N}(0,b_3)/|b_3|<\varepsilon_1+\varepsilon_2$ (we note that this is possible because $\mathcal{N}(1,a_2)/|a_2|<\varepsilon_1$ and $a_3$ is mainly made of concatenated copies of $a_2$, similarly $\mathcal{N}(0,b_2)/|b_2|<\varepsilon_1$).
We further assume that $n_3$ has been chosen sufficiently large such that for $u\in\{w_{(1,2)},w_{(2,2)},b_2\}$,  we have
\begin{equation}\label{eqd:fraction1}
\frac{\mathcal{N}(u,a_3^{(2)})}{|a_3^{(2)}|}<\frac{\varepsilon_2}{|u|(2|u|-1)^d},
\end{equation}
and note that this can be done since $a_3^{(2)}$ is mainly made of concatenated copies of $a_2^{(2)}$ and  $u$ does not arise as a subword of $a_2^{(2)}$.  Using the same reasoning, this choice of $n_3$ ensures that if $u\in\{w_{(1,2)},w_{(2,2)},a_2\}$, then we have
\begin{equation}\label{eqd:fraction2}
\frac{\mathcal{N}(u,b_3^{(2)})}{|b_3^{(2)}|}<\frac{\varepsilon_2}{|u|(2|u|-1)^d}.
\end{equation}
Finally note that by choosing $n_3$ sufficiently large, we can guarantee that $|a_2|/|a_3|$ is as small as desired, and for use in the proof of the next proposition describing occurrences of words and subwords, we need a specific bound.  Let $a_2^{(\infty)}\colon\Z^d\to\mathcal{A}$ be the infinite self-concatenation of $a_2$ given by:
$$
a_2^{(\infty)}(x_1,\dots,x_d):=a_2(x_1\text{ (mod $n_2$)},\dots,x_d\text{ (mod $n_2$)}).
$$
Let $\mathcal{V}\subseteq\Z^d$ be the set of all period vectors of $a_2^{(\infty)}$: $(v_1,\dots,v_d)\in\mathcal{V}$ if and only if $a_2^{(\infty)}(x_1,\dots,x_d)=a_2^{\infty}(x_1+v_1,\dots,x_d+v_d)$ for all $(x_1,\dots,x_d)\in\Z^d$.  Then $\mathcal{V}$ is a finite-index subgroup of $\Z^d$; let $p$ be the index of $\mathcal{V}$ in $\Z^d$.  Note that at any location where $a_2$ occurs as a subword of $a_2^{(2)}$, say on the set $\prod_{i=1}^d\{y_i+1,y_i+2,\dots,y_i+n_2\}$, we get $(y_1,\dots,y_d)\in\mathcal{V}$.  Therefore, the number of times $a_2$ occurs as a subword of $a_2^{(2)}$ satisfies:
\begin{multline}\label{eqd:multiplicity}
\frac{n_2^d}{p}\leq\left|\{1,\dots,2n_2\}^d\cap\mathcal{V}\right|\leq\left|\{1,\dots,2n_2-1\}^d\cap\mathcal{V}\right|+dn_2^{d-1} \\ =\frac{n_2^d}{p}+dn_2^{d-1}=n_2^d\cdot\left(\frac{1}{p}+\frac{d}{n_2}\right).
\end{multline}
We call the words constructed on this level the {\em level-3 words} and set $\mathcal{W}_3$ to be the set of all level-3 words.

\begin{proposition}\label{propd:level2}
For any choice of  distinct words $u,v\in\mathcal{W}_3$, the word $u$ does not occur as a subword of $v^{(2)}$.
\end{proposition}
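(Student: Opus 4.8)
The plan is to follow the three‑case structure of the proof of Proposition~\ref{prop:level2} essentially verbatim at the level of strategy, replacing each one‑dimensional input by its higher‑dimensional counterpart: Proposition~\ref{propd:level1} in place of Proposition~\ref{prop:level1}, the postcard descriptions of $a_3,b_3$ in place of the explicit concatenations, the period lattice $\mathcal{V}$ (of index $p$ in $\Z^d$) in place of the minimal period, the overlap factor $(2|u|-1)^d$ in place of $2|u|-1$, and the boundary estimate~\eqref{eqd:multiplicity} in place of the exact density $1/p$. Fix $v\in\mathcal{W}_3$ and show, for each $u\in\mathcal{W}_3\setminus\{v\}$, that $u$ does not occur as a subword of $v^{(2)}$, splitting into cases according to the type of $v$.

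\emph{The periodic cases.} If $v=w_{(i,3)}$ with $i\in\{1,2\}$, then $v^{(2)}$ is a self‑concatenation of copies of $w_{(i,2)}$, so any subword of $v^{(2)}$ having the shape of a level‑$2$ word lies inside a $2\times\cdots\times2$ block of such tiles and hence occurs in $w_{(i,2)}^{(2)}$. By Proposition~\ref{propd:level1}, $w_{(i,2)}^{(2)}$ contains neither $a_2$ nor $b_2$ and does not contain $w_{(j,2)}$ for $j\neq i$; since each of $w_{(3,3)},w_{(4,3)},a_3,b_3$ contains $a_2$ or $b_2$ as a subword and $w_{(j,3)}$ is built from copies of $w_{(j,2)}$, none of these can occur in $v^{(2)}$. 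The case $i\in\{3,4\}$ is identical after swapping roles, using that $a_2^{(2)}$ and $b_2^{(2)}$ contain neither $w_{(1,2)}$ nor $w_{(2,2)}$, that $a_2\notin b_2^{(2)}$, and that $b_2\notin a_2^{(2)}$.

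\emph{The density case.} Take $v=a_3$; the case $v=b_3$ follows by interchanging $a_2\leftrightarrow b_2$ throughout. If $u=w_{(3,3)}=a_2^{(5n_3+4)}$, then an occurrence of $u$ in $a_3^{(2)}$ would force an arbitrarily long cubical run of copies of $a_2$ inside a postcard built from $a_2$'s, whereas Proposition~\ref{propd:level1} bounds the largest $m$ for which $a_2^{(m)}$ is a subword of $a_3^{(2)}$ (a longer run would embed a non‑$a_2$ level‑$2$ word), so no such $u$ occurs. The remaining possibility is $u\in\{w_{(1,3)},w_{(2,3)},w_{(4,3)},b_3\}$, which I would handle by the counting argument of Proposition~\ref{prop:level2}: by~\eqref{eqd:fraction1}, each level‑$2$ word $w\neq a_2$ occurs in $a_3^{(2)}$ with density below $\varepsilon_2/(|w|(2|w|-1)^d)$, and since any fixed occurrence of $w$ is met by at most $(2|w|-1)^d$ windows of the shape of $a_2$, the density in $a_3^{(2)}$ of shape‑$a_2$ windows meeting some level‑$2$ word other than $a_2$ is at most $3\varepsilon_2/|a_2|$ (three such words, all of equal size). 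Together with~\eqref{eqd:multiplicity} this gives a lower bound of the form $\mathcal{N}(a_2,a_3^{(2)})/|a_3^{(2)}|\ge 1/p-O(\varepsilon_2/|a_2|+d/n_2)$, the $d$‑dimensional analogue of~\eqref{eq:calc}. For the upper bound, suppose $u$ occupies a sub‑box $B$ of $a_3^{(2)}$ of relative volume $2^{-d}$. Occurrences of $a_2$ lying entirely in $B$ number $\mathcal{N}(a_2,u)$, which is $O(|a_2|)$: $a_2$ occurs in none of $w_{(1,2)}^{(2)},w_{(2,2)}^{(2)},b_2^{(2)}$, so inside $u$ it must straddle a tile boundary adjacent to one of the finitely many stamp tiles, or coincide with the single explicit copy of $a_2$ appearing in $b_3$. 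Occurrences not entirely in $B$ have density at most $1/p$, by the period‑lattice argument of Proposition~\ref{prop:level2}, over a set of at most $(2^d-1)|a_3|+O\bigl(n_2|a_3|^{(d-1)/d}\bigr)$ start positions. Choosing $n_3$ so that $|a_2|/|a_3|$ is sufficiently small — the $d$‑dimensional analogue of~\eqref{eq:fraction3} — makes the resulting upper bound on $\mathcal{N}(a_2,a_3^{(2)})/|a_3^{(2)}|$ strictly smaller than the lower bound, a contradiction.

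\emph{Main obstacle.} The only genuine departure from the one‑dimensional proof, and where care is required, is the $d$‑dimensional bookkeeping in the density case: the complement of the sub‑box $B$ inside $a_3^{(2)}$ is no longer a union of two intervals, so one must count $\mathcal{V}$‑translates inside a box‑minus‑box region, incurring boundary terms of order $d\,n_2^{d-1}$ per $n_2^d$‑block (already isolated in~\eqref{eqd:multiplicity}), together with the analogous corrections for the occurrences of $a_2$ that straddle the boundary of $B$. The technical heart is to verify that these boundary corrections, the stamp contribution $3\varepsilon_2/|a_2|$, and the interior contribution $O(|a_2|)$ all stay uniformly inside the gap left by~\eqref{eqd:multiplicity} once $n_3$ is chosen so that $|a_2|/|a_3|$ is small enough; granting that, every remaining step is a routine transcription of the proof of Proposition~\ref{prop:level2}.
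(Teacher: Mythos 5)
Your periodic cases ($v=w_{(i,3)}$) match the paper and are fine, but in the density case $v\in\{a_3,b_3\}$ your plan transplants the one-dimensional argument at two points where the paper's $d$-dimensional proof genuinely departs from it, and both transplants have gaps. First, for $u=w_{(3,3)}$ you argue that a large cubical run $a_2^{(m)}$ cannot occur in $a_3^{(2)}$ because "a longer run would embed a non-$a_2$ level-2 word." In $d\geq2$ this is false for runs up to essentially full size: the stamps of $a_3$ occupy only four $n_2$-blocks near one corner of each copy of $a_3$, so $a_3^{(2)}$ contains stamp-free, fully $a_2$-periodic cubes whose side is only one tile shorter than $w_{(3,3)}$ itself (e.g.\ offset the window by $3n_2$ in every coordinate). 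Unlike in one dimension, the stamps do not recur with bounded gaps in every direction, so Proposition~\ref{propd:level1} alone gives no useful bound on $m$. The paper instead argues that a window of the full side length must meet one of the $2^d$ corner copies of the stamp $w_{(1,2)}$, cannot contain it (by Proposition~\ref{propd:level1}), and then uses the periodicity of $u$ to force $w_{(1,2)}$ into $a_2^{(2)}$; the extra periodicity step is needed precisely because a partial overlap with the all-zero stamp $w_{(1,2)}$ (or with the stamp $a_2$ itself) is not automatically inconsistent with an $a_2$-periodic window.

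Second, for $u\in\{w_{(1,3)},w_{(2,3)},w_{(4,3)},b_3\}$ the paper does not run your global density comparison at all: it lower-bounds $\mathcal{N}(a_2,u)$ directly, observing that an occurrence of $u$ inside $a_3^{(2)}$ forces the interior of $u$ to be tiled by $a_2$'s, giving at least $(n_2n_3-2n_2)^d\cdot\frac1p-3(2n_2-1)^d$ occurrences via~\eqref{eqd:multiplicity}, which contradicts the upper bound $\mathcal{N}(a_2,u)\leq3(2n_2-1)^d$ with an enormous margin. Your route (comparing upper and lower bounds on $\mathcal{N}(a_2,a_3^{(2)})/|a_3^{(2)}|$, as in~\eqref{eq:calc}) can be made to work, but the margin is only $2^{-d}/p$, and here $p=|a_2|=n_2^d$ exactly (the period lattice of $a_2^{(\infty)}$ is $n_2\Z^d$ since $a_2$ has a single $1$), so every error term must be $o(n_2^{-d})$ as $n_3\to\infty$. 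Your stated lower bound $\geq\frac1p-O(\varepsilon_2/|a_2|+d/n_2)$ carries the per-block boundary term $d/n_2$ from~\eqref{eqd:multiplicity}; that term does not shrink with $n_3$ and dwarfs the gap $2^{-d}n_2^{-d}$ for $d\geq2$, so the comparison as written does not close. The repair is to count $\mathcal{V}$-translates over all of $a_3^{(2)}$ (side comparable to $n_2n_3$), where the boundary correction is $O(1/(n_2n_3))$ and vanishes; alternatively, adopt the paper's localized count inside $u$, which avoids this tightrope entirely.
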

\begin{proof}
We check that for each $v\in\mathcal{W}_3$ and  $u\in\mathcal{W}_3\setminus\{v\}$, the word $u$ does not occur as a subword of $v^{(2)}$, checking cases depending on the type of word $v$.

First consider when  $v=w_{(i,3)}$ for some $i\in\{1,2\}$.   We consider two cases, depending on the choice of $u$. First suppose that $u\in\{w_{(3,3)},w_{(4,3)},a_3,b_3\}$.  By Proposition~\ref{propd:level1}, neither $a_2$ nor $b_2$ occurs as a subword of $w_{(i,2)}^{(2)}$.  The word $v^{(2)}=w_{(i,3)}^{(2)}$ is the self-concatenation of a large number of copies of $w_{(i,2)}$ and so neither $a_2$ nor $b_2$ occurs as a subword of $v^{(2)}$.  But at least one of $a_2$ and $b_2$ occurs as a subword of $u$, and so $u$ cannot be a subword of $v^{(2)}$.
Next suppose $u=w_{(j,3)}$ where $j\in \{1,2\}$ and $j\neq i$.  The word $u=w_{(j,3)}$ is not a subword of $v^{(2)}=w_{(i,3)}^{(2)}$ because $w_{(j,2)}$ is not a subword of $w_{(i,2)}^{(2)}$ and, as noted, $v^{(2)}$ is the self-concatenation of copies of $w_{(i,2)}$ while $w_{(j,3)}$ is the self-concatenation of copies of $w_{(j,2)}$.  We deduce that
$u$ does not occur as a subword of $v^{(2)}$ for any $u\in\mathcal{W}_3\setminus\{v\}$.  Thus the statement holds when $v\in\{w_{(1,3)},w_{(2,3)}\}$.

Next consider when $v=w_{(i,3)}$ for some $i\in\{3,4\}$.  Again we have two cases, depending on the choice of $u$.
First suppose $u\in\{w_{(1,3)},w_{(2,3)},a_3,b_3\}$.
By Proposition~\ref{propd:level1}, neither $w_{(1,2)}$ nor $w_{(2,2)}$ occurs as a subword of $a_2^{(2)}$ or of $b_2^{(2)}$.  The word $v^{(2)}=w_{(i,3)}^{(2)}$ is the self-concatenation of a large number of copies of $a_2$ or of $b_2$, and so neither $w_{(1,2)}$ nor $w_{(2,2)}$ occurs as a subword of $v^{(2)}$.
However at least one of $w_{(1,2)}$ and $w_{(2,2)}$ occurs in $u$, and so $u$ is not a subword of $v^{(2)}$.  We next consider when $j\in\{3,4\}\setminus\{i\}$ and let $u=w_{(j,3)}$.  Then one of the words $u$ and $v$ is the self-concatenation of many copies of $a_2$ and the other is the self-concatenation of many copies of $b_2$.  By Proposition~\ref{propd:level1}, $a_2$ is not a subword of $b_2^{(2)}$ and $b_2$ is not a subword of $a_2^{(2)}$, and so $u$ is not a subword of $v^{(2)}$.  This establishes the statement when $v\in\{w_{(3,3)},w_{(4,3)}\}$.

Finally consider when $v\in\{a_3,b_3\}$.  We give the argument when $v=a_3$, and the argument for $b_3$ is similar. Again, we have two cases, depending on the choice of $u$.
First suppose $u\in\{w_{(1,3)},w_{(2,3)},w_{(4,3)},b_3\}$.  By~\eqref{eqd:fraction1}, we know that $\mathcal{N}(x,a_3^{(2)})/|a_3^{(2)}|<\varepsilon_2/|x|(2|x|-1)^d$ for $x\in\mathcal{W}_2\setminus\{a_2\}$.  For any such $x$ and any particular occurrence of $x$ in $a_3^{(2)}$, there are $(2|x|-1)^d$ subwords of $a_3^{(2)}$ that have
the shape $[x]$
and partially (or completely) overlap this occurrence of $x$.  This means that, for any $x\in\mathcal{W}_2\setminus\{a_2\}$, if we look at the collection of all locations within $a_3^{(2)}$ where $x$ occurs, we have
$$
\frac{\text{$\#$ of subwords with shape $[x]$ in $a_3^{(2)}$ that overlap an occurrence of $x$}}{|a_3^{(2)}|}<\frac{\varepsilon_2}{|x|}.
$$
Recall that $p$ is the index, in $\Z^d$, of the stabilizer subgroup of $a_2^{(\infty)}\colon\Z^d\to\{0,1\}$ (when acted on by $\Z^d$ translations) and the number of occurrences of $a_2$ as a subword of $a_2^{(2)}$ satisfies~\eqref{eqd:multiplicity}.  Since all four words in $\mathcal{W}_2$ have the same size, we deduce from~\eqref{eqd:fraction1} that
\begin{equation}
\frac{\mathcal{N}(a_2,a_3^{(2)})}{|a_3^{(2)}|}\geq\frac{1}{p}-3\cdot\frac{\varepsilon_2}{|a_2|}
\end{equation}
because $a_3^{(2)}$ is made by concatenating words in $\mathcal{W}_2$.  On the other hand, we claim that $\mathcal{N}(a_2,u)\leq3\cdot(2|a_2|-1)^d$. To check this, note that if $u=b_3$, any occurrence of $a_2$ in $u$ must partially overlap $w_{(1,2)}$, $w_{(2,2)}$ or $a_2$ in the definition of $b_3$, because it does not occur as a subword of $b_2^{(2)}$, and there are only $3\cdot(2|a_2|-1)^d$ locations that have such overlaps.  If $u\neq b_3$ then $a_2$ does not occur in $u$, by Proposition~\ref{propd:level1} and the claim follows.

Now we return to showing that $u$ does not occur as a subword of $a_3^{(2)}$.  For contradiction, suppose $u$ does occur as a subword of $a_3^{(2)}$.  Recall, from the definition, that $a_3^{(2)}$ is primarily made of concatenated copies of $a_2$ and there are three specific locations where the words $w_{(1,2)}, w_{(2,2)}, b_2$ occur instead.  Since $u$ occurs as a subword of $a_3^{(2)}$ this means $u$ is primarily made of concatenated copies of $a_2$.  In fact, allowing for the possibility that $u$ occurs within $a_3^{(2)}$ in a location that overlaps the copies of $w_{(1,2)}, w_{(2,2)}, b_2$ and that there may be no copies of $a_2$ that partially overlap these words, we can still use~\eqref{eqd:multiplicity} to estimate that
$$
\mathcal{N}(a_2,u)\geq((n_2n_3-2n_2)^d\cdot\frac{1}{p}-3\cdot(2n_2-1)^d.
$$
In more detail, recall that $u\colon\{1,2,\dots,n_2n_3\}^d\to\{0,1\}$ has a domain which is a cube in dimension $d$.  The number $(n_2n_3-2n_2)^d$ is the volume of the subcube obtained by removing the border of length $n_2$ from each side.  Then, using~\eqref{eqd:multiplicity}, $((n_2n_3-2n_2)^d\cdot\frac{1}{p}$ gives the minimum number of occurrences of $a_2$ within this subcube if we assume that the subcube is entirely populated by occurrences of $a_2^{(2)}$.  Finally we subtract off $3\cdot(2n_2-1)^d$ to account for the fact that this occurrence of $u$ might overlaps the words $w_{(1,2)}, w_{(2,2)}, b_2$ in the definition of $a_3$ and that no word of size $|a_2|$ that partially overlaps these occurrences could be $a_2$.  Nonetheless, note that this lower bound on $\mathcal{N}(a_2,u)$ is larger than our previous upper bound of $3\cdot(2|a_2|-1)^d$ provided $|a_2|/|a_3|$ is sufficiently small (and we make this assumption).

Lastly, suppose that $u=w_{(3,3)}$.
By construction, $u$ is the self-concatenation of many copies of $a_2$.  If $u$ occurred within $v^{(2)}=a_3^{(2)}$ then it would either contain or partially overlap an occurrence of $w_{(1,2)}$ which is used in the definition of $a_3$.  Since $w_{(1,2)}$ does not occur as a subword of $a_2^{(2)}$, this occurrence of $u$ cannot contain $w_{(1,2)}$ as a subword so it can only occur in a location that partially overlaps $w_{(1,2)}$.  But $|u|=|v|$ and $u$ is the periodic concatenation of copies of $a_2$, so this occurrence of $u$ overlaps the copy of $w_{(1,2)}$ in all $2^d$ copies of $a_3$ that appear in the definition of $a_3^{(2)}$.  By periodicity of $u$, $w_{(1,2)}$ occurs as a subword of $a_2^{(2)}$, which is impossible.
This establishes the statement when $v=a_3$.  The argument for $v=b_3$ is similar, with the roles played by $a_2$ and $b_2$ switched.
\end{proof}

\subsubsection{Preparing to build the level $k+1$ words (inductive assumptions)}
For $k\geq3$,  we construct the words on level $k+1$ using the words on level $k$.  Again they come in two varieties (periodic words and density words), and again we have a parameter $n_{k+1}> 1$ chosen to guarantee that the words have the desired properties.  Inductively, we assume that we have constructed the level-$k$ words: these are words $w_{(i,k)}$ for all $1\leq i\leq2k-2$, words $a_k$ and $b_k$, with all of these words having the same domain: $\{1,2,\dots,\prod_{i=1}^kn_i\}^d$, where $n_1,\dots,n_k\in\N$ are parameters we assume have been defined already, and we denote the collection of all level-$k$ words by $\mathcal W_k$.  We further assume that these constructions satisfy the following properties:
	\begin{enumerate}
	\item \label{itemd:firstk}
 For $1\leq i\leq2k-4$, we have
	$$
	w_{(i,k)}=w_{(i,k-1)}^{(n_k)}.
	$$
	\item We have
	$$
	w_{(2k-3,k)}=a_{k-1}^{(n_k)}.
	$$
	\item We have
	$$
	w_{(2k-2,k)}=b_{k-1}^{(n_k)}.
	$$
	\item We have
	$$
	a_k=\mathcal{P}[w_{(1,k-1)},w_{(2,k-1)},\dots,w_{(2k-4,k-1)},a_{k-1},b_{k-1}|a_{k-1},n_k].
	$$
	\item
 \label{itemd:lastk}
 We have
	$$
	b_k=\mathcal{P}[w_{(1,k-1)},w_{(2,k-1)},\dots,w_{(2k-4,k-1)},a_{k-1},b_{k-1}|b_{k-1},n_k].
	$$
	\end{enumerate}
We further assume that for any word $u$ constructed on some level $m$ with $m<k$, other than when $u=a_m$, we have
\begin{equation}\label{eqd:fraction4}
\frac{\mathcal{N}(u,a_k^{(2)})}{|a_k^{(2)}|}<\frac{1}{|u|(2|u|-1)^d}\sum_{j=m}^{k-1}\varepsilon_j
\end{equation}
and that for any $m<k$ and any level-$m$ word $u$ other than $u=b_m$, we have
\begin{equation*}
\frac{\mathcal{N}(u,b_k^{(2)})}{|b_k^{(2)}|}<\frac{1}{|u|(2|u|-1)^d}\sum_{j=m}^{k-1}\varepsilon_j.
\end{equation*}
Finally we assume that
\begin{equation}\label{claimd:new}
\text{for any $u,v\in\mathcal{W}_k$ with $u\neq v$, the word $u$ does not occur as a subword of $v^{(2)}$.}
\end{equation}

\subsubsection{Level-$(k+1)$ words (recursive definition)}
We apply the analogous procedure used for  level $3$  and  construct the periodic and density words, this time assuming the properties given in~\eqref{itemd:firstk} through~\eqref{itemd:lastk}.  We define a parameter, $n_{k+1}$, whose properties are discussed below and we set the following notation.
	\begin{enumerate}
	\item \label{itemd:firstk+1}
 For  $1\leq i\leq2k-2$, define the first group of periodic words  by setting
	$$
	w_{(i,k+1)}=w_{(i,k)}^{(n_{k+1})}.
	$$
	\item Define the next periodic word by setting
	$$
	w_{(2k-1,k+1)}=a_k^{(n_{k+1})}.
	$$
	\item Define the last periodic word by setting
	$$
	w_{(2k,k+1)}=b_k^{(n_{k+1})}.
	$$
	\item Define the first type of density word by setting
	$$
	a_{k+1}=\mathcal{P}[w_{(1,k-1)},w_{(2,k-1)},\dots,w_{(2k-2,k-1)},a_k,b_k|a_k,n_{k+1}].
	$$
	\item
 \label{itemd:lastk+1}
Define the second type of density word by setting
	$$
	b_{k+1}=\mathcal{P}[w_{(1,k-1)},w_{(2,k-1)},\dots,w_{(2k-2,k-1)},a_k,b_k|b_k,n_{k+1}].
	$$
	\end{enumerate}
 We choose the parameter $n_{k+1}$ sufficiently large such that for every $m<k+1$ and every word $u\in\mathcal{W}_m\setminus\{a_m\}$, we have
\begin{equation}\label{eqd:fraction5}
\frac{\mathcal{N}(u,a_{k+1}^{(2)})}{|a_{k+1}^{(2)}|}<\frac{1}{|u|(2|u|-1)^d}\sum_{j=m}^k\varepsilon_j.
\end{equation}
When $m<k$, recall that $\mathcal{N}(u,a_k^{(2)})/|a_k^{(2)}|<(\sum_{j=m}^{k-1}\varepsilon_j)/|u|(2|u|-1)^d$ by~\eqref{eqd:fraction4}.
Therefore it is possible to satisfy~\eqref{eqd:fraction5} because, for $n_{k+1}$ large, the word $a_{k+1}$  primarily consists of concatenated copies of $a_k$ and so we can make the left hand side of~\eqref{eqd:fraction5} as close as we want to $(\sum_{j=m}^{k-1}\varepsilon_j)/|u|(2|u|-1)^d$; in particular we can make it less than the right hand side of~\eqref{eqd:fraction5}.  When $m=k$ it is possible to satisfy~\eqref{eqd:fraction5} because $u$ does not occur as a subword of $a_k^{(2)}$ and, again, $a_{k+1}$ is primarily made of concatenated copies of $a_k$.  Similarly we choose $n_{k+1}$  sufficiently large  such that if $u\in\mathcal{W}_m\setminus\{b_m\}$, then we have
\begin{equation}\label{eqd:fraction6}
\frac{\mathcal{N}(u,b_{k+1}^{(2)})}{|b_{k+1}^{(2)}|}<\frac{1}{|u|(2|u|-1)^d}\sum_{j=m}^k\varepsilon_j.
\end{equation}
For our next estimate, let $p_k$ be the index, in $\Z^d$, of the group of periods of the word $a_k^{(\infty)}$ defined by
$$
a_k^{(\infty)}(x_1,\dots,x_d):=a_k(x_1\pmod{n_k},\dots,x_d\pmod{n_k}).
$$
We finally require that $n_{k+1}$ is sufficiently large such that
\begin{equation}\label{eqd:inductive-bound}
\frac{|a_k|}{|a_{k+1}|}<\frac{1}{(4k-2)p_k}-\frac{1}{3^k\cdot|a_k|}
\end{equation}
This is possible as long as the right-hand side of the inequality is positive.  Note that $p_k\leq|a_k|$ and that $4k-2<3^k$ because $k\geq3$.  Note that all of the words constructed on level $k+1$ have the same domain: $\{1,2,\dots,\prod_{i=1}^{k+1}n_i\}^d$.
We call the words constructed in~\eqref{itemd:firstk+1} through~\eqref{itemd:lastk+1} the {\em level-$(k+1)$ words} and let $\mathcal{W}_{k+1}$ denote the set of all level-$(k+1)$ words.   Therefore, the only thing that is required to complete our inductive construction is to prove the following proposition.

\begin{proposition}\label{propd:levelkplus1}
For any choice of distinct words  $u,v\in\mathcal{W}_{k+1}$, the word $u$ does not occur as a subword of $v^{(2)}$.
\end{proposition}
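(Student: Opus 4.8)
The plan is to mirror, case by case, the proofs of Proposition~\ref{propd:level2} and of its one-dimensional analogue Proposition~\ref{prop:levelkplus1}: fix $v\in\mathcal{W}_{k+1}$ and rule out each distinct $u\in\mathcal{W}_{k+1}$ as a subword of $v^{(2)}$, splitting according to the type of $v$. The two periodic cases are inherited from~\eqref{claimd:new}. If $v=w_{(i,k+1)}$ with $1\le i\le 2k-2$, then $v^{(2)}$ is a self-concatenation of $w_{(i,k)}$; any $u\in\{w_{(2k-1,k+1)},w_{(2k,k+1)},a_{k+1},b_{k+1}\}$ contains $a_k$ or $b_k$, neither of which occurs in $w_{(i,k)}^{(2)}$ by~\eqref{claimd:new}, while $u=w_{(j,k+1)}$ with $j\ne i$ would force $w_{(j,k)}$ to occur in $w_{(i,k)}^{(2)}$. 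If $v=w_{(i,k+1)}$ with $i\in\{2k-1,2k\}$, say with $v^{(2)}$ a self-concatenation of $a_k$, then every $u$ other than the pure $b_k$-concatenation contains some $w_{(j,k)}$ with $1\le j\le 2k-2$, which by~\eqref{claimd:new} does not occur in $a_k^{(2)}$, and the pure $b_k$-concatenation $w_{(2k,k+1)}$ would force $b_k$ to occur in $a_k^{(2)}$; the subcase $v=w_{(2k,k+1)}$ is symmetric under $a_k\leftrightarrow b_k$.

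The substantive case is $v\in\{a_{k+1},b_{k+1}\}$, and by symmetry it suffices to treat $v=a_{k+1}$. First I would handle $u=w_{(2k-1,k+1)}$, the pure self-concatenation of $a_k$: it has the same cubical support as $a_{k+1}$ and is invariant under the period subgroup of $a_k^{(\infty)}$, so an occurrence of $u$ inside $a_{k+1}^{(2)}$ would force $a_{k+1}^{(2)}$ to agree with a translate of $a_k^{(\infty)}$ throughout $u$'s support; but that support is large enough to meet the stamp region of some copy of $a_{k+1}$ among the $2^d$ copies making up $a_{k+1}^{(2)}$, and using the periodicity of $u$ to transport the (partial) views of a stamp word $w_{(j,k)}$ around the support reconstructs a full occurrence of $w_{(j,k)}$ inside $a_k^{(2)}$, contradicting~\eqref{claimd:new} --- this is exactly the argument used for $u=w_{(3,3)}$ in the proof of Proposition~\ref{propd:level2}.

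For the remaining candidates $u\in\{w_{(j,k+1)}\colon 1\le j\le 2k-2\}\cup\{w_{(2k,k+1)},b_{k+1}\}$ I would run the counting argument, which is the heart of the proof and follows the $v=a_3$ case of Proposition~\ref{propd:level2}. Since $a_k$ occurs in neither $w_{(j,k)}^{(2)}$ nor $b_k^{(2)}$ by~\eqref{claimd:new}, only $u=b_{k+1}$ can contain $a_k$ at all, and there every occurrence of $a_k$ must straddle one of the at most $2k-1$ stamp words $w_{(1,k)},\dots,w_{(2k-2,k)},a_k$ in the postcard presentation of $b_{k+1}$, giving the upper bound $\mathcal{N}(a_k,u)\le(2k-1)(2|a_k|-1)^d$. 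On the other hand, suppose $u$ occurred as a subword of $a_{k+1}^{(2)}$. By~\eqref{eqd:fraction5}, the density in $a_{k+1}^{(2)}$ of occurrences of any word in $\mathcal{W}_k\setminus\{a_k\}$ is tiny --- using here that a single occurrence of a word $w$ is overlapped by at most $(2|w|-1)^d$ windows of shape $[w]$ --- so away from a border of size $O(k)|a_k|$ around the $2^d$ copies of $a_{k+1}$ and their stamp slabs, the occurrence of $u$ sits in a region assembled from copies of $a_k$; applying the period-lattice count~\eqref{eqd:multiplicity} (which bounds by $1/p_k$ the density of $a_k$-occurrences in any box, $p_k$ being the index of the period subgroup of $a_k^{(\infty)}$) to the large clean subcube of that region yields a lower bound on $\mathcal{N}(a_k,u)$ of the form $|a_{k+1}|/p_k-O(k)|a_k|$ up to lower-order terms. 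Provided $|a_k|/|a_{k+1}|$ is sufficiently small --- the role of~\eqref{eqd:inductive-bound}, after taking $n_{k+1}$ large --- this exceeds the upper bound $(2k-1)(2|a_k|-1)^d$, a contradiction, so no such $u$ occurs. The case $v=b_{k+1}$ is identical with the roles of $a_k$ and $b_k$ (and of~\eqref{eqd:fraction5} and~\eqref{eqd:fraction6}) interchanged, which completes the induction.

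The main obstacle is this last counting step: one must carry out the two-sided estimate on $\mathcal{N}(a_k,u)$ while correctly bookkeeping the genuinely $d$-dimensional features --- the $(2|w|-1)^d$ overlap multiplicity feeding into~\eqref{eqd:fraction5}, the replacement of ``minimal period'' by the index $p_k$ of the period lattice via~\eqref{eqd:multiplicity}, and the $2^d$ copies of $a_{k+1}$ inside $a_{k+1}^{(2)}$ together with their thin stamp slabs --- and then verifying that all boundary and overlap corrections are of lower order, so that the largeness of $n_{k+1}$ encoded in~\eqref{eqd:inductive-bound} (with the dimensional constants absorbed into that ``sufficiently large'' hypothesis) still closes the gap exactly as in the one-dimensional construction.
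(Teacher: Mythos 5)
Your proposal is correct and follows essentially the same route as the paper's proof: the periodic cases via the inductive hypothesis~\eqref{claimd:new}, the $u=w_{(2k-1,k+1)}$ case via the periodicity/stamp-transport argument (as for $w_{(3,3)}$ in Proposition~\ref{propd:level2}), and the remaining candidates via the two-sided count of $\mathcal{N}(a_k,u)$ — upper bound $(2k-1)(2|a_k|-1)^d$ from the stamp structure of $b_{k+1}$, lower bound from the period-lattice estimate~\eqref{eqd:multiplicity} inside an occurrence in $a_{k+1}^{(2)}$, contradicting via~\eqref{eqd:inductive-bound}. This matches the paper's argument step for step, including the symmetry reduction to $v=a_{k+1}$.
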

\begin{proof}
The proof of this proposition is analogous to that of Proposition~\ref{propd:level2}.  Fixing some  $v\in\mathcal{W}_{k+1}$,  we argue that for all $u\in\mathcal{W}_{k+1}\setminus\{v\}$, the statement holds.  Again, we split the argument into cases depending on the type of the word $v$.

First consider when  $v\in\{w_{(i,k+1)}\colon1\leq i\leq2k-2\}$.   We consider two cases, depending on the choice of $u$. First suppose that $u\in\{w_{(2k-1,k+1)},w_{(2k,k+1)},a_{k+1},b_{k+1}\}$.  By Proposition~\ref{propd:level1}, neither $a_k$ nor $b_k$ occurs as a subword of $w_{(i,k)}^{(2)}$.  The word $v^{(2)}=w_{(i,k+1)}^{(2)}$ is the self-concatenation of a large number of copies of $w_{(i,k)}$ and so neither $a_k$ nor $b_k$ occurs as a subword of $v^{(2)}$.  But at least one of $a_k$ and $b_k$ occurs as a subword of $u$, and so $u$ cannot be a subword of $v^{(2)}$.
Next suppose $u=w_{(j,k+1)}$ where $j\in \{1,2,\dots,2k-2\}$ and $j\neq i$.  The word $u=w_{(j,k+1)}$ is not a subword of $v^{(2)}=w_{(i,k+1)}^{(2)}$ because $w_{(j,k)}$ is not a subword of $w_{(i,k)}^{(2)}$ and, as noted, $v^{(2)}$ is the self-concatenation of copies of $w_{(i,k)}$ while $w_{(j,k+1)}$ is the self-concatenation of copies of $w_{(j,k)}$.  Thus
$u$ does not occur as a subword of $v^{(2)}$ for any $u\in\mathcal{W}_{k+1}\setminus\{v\}$.  Thus the statement holds when $v\in\{w_{(i,k+1)}\colon1\leq i\leq2k-2\}$.

Next consider when $v=w_{(i,k+1)}$ for some $i\in\{2k-1,2k\}$.  Again we have two cases, depending on the choice of $u$.
First suppose $u\in\{w_{(j,k+1)}\colon1\leq j\leq2k-2\}\cup\{,a_{k+1},b_{k+1}\}$.
By Proposition~\ref{propd:level1}, none of the words in the set $\{w_{(j,k)}\colon1\leq j\leq2k-2\}$ occurs as a subword of $a_k^{(2)}$ or of $b_k^{(2)}$.  But by construction, the word $w_{(i,k+1)}^{(2)}$ is the self-concatenation of a large number of copies of $a_k$ or of $b_k$, whereas at least one of the words in $\{w_{(j,k)}\colon1\leq j\leq2k-2\}$ occurs as a subword of $u$.  Therefore $u$ occurs as a subword of $v^{(2)}$.  We next consider when $j\in\{2k-1,2k\}\setminus\{i\}$ and let $u=w_{(j,k+1)}$.  Then one of the words $u$ and $v$ is the self-concatenation of many copies of $a_k$ and the other is the self-concatenation of many copies of $b_k$.  By Proposition~\ref{propd:level1}, $a_k$ is not a subword of $b_k^{(2)}$ and $b_k$ is not a subword of $a_k^{(2)}$, and so $u$ is not a subword of $v^{(2)}$.  This establishes the statement when $v\in\{w_{(2k-1,k+1)},w_{(2k,k+1)}\}$.

Finally consider when $v\in\{a_{k+1},b_{k+1}\}$.  We give the argument when $v=a_{k+1}$, and the argument for $b_{k+1}$ is similar. Again, we have two cases, depending on the choice of $u$.
First suppose $u\in\mathcal{W}_{k+1}\setminus\{w_{(2k-1,k+1)},a_{k+1}\}$.  
By~\eqref{eqd:fraction5}, we know that $\mathcal{N}(x,a_{k+1}^{(2)})/|a_{k+1}^{(2)}|<\frac{1}{|x|(2|x|-1)^d}\sum_{j=m}^k$ for $x\in\mathcal{W}_k\setminus\{a_k\}$.  For any such $x$ and any particular occurrence of $x$ in $a_{k+1}^{(2)}$, there are $(2|x|-1)^d$ subwords of $a_{k+1}^{(2)}$ that have shape $|x|$ and partially (or completely) overlap this occurrence of $x$.  This means that, for any $x\in\mathcal{W}_k\setminus\{a_k\}$, if we look at the collection of all locations within $a_{k+1}^{(2)}$ where $x$ occurs, we have
$$
\frac{\text{$\#$ of subwords of shape $|x|$ in $a_{k+1}^{(2)}$ that overlap an occurrence of $x$}}{|a_{k+1}^{(2)}|}<\frac{1}{|x|}\sum_{j=m}^k.
$$
Recall that $p_k$ is the index, in $\Z^d$, of the stabilizer subgroup of $a_k^{(\infty)}\colon\Z^d\to\{0,1\}$ (when acted on by $\Z^d$ translations) and the number of occurrences of $a_k$ as a subword of $a_k^{(2)}$ satisfies~\eqref{eqd:multiplicity}.  Since all $2k-2$ words in $\mathcal{W}_k$ have the same size, we deduce from~\eqref{eqd:fraction1} that
\begin{equation}
\frac{\mathcal{N}(a_k,a_{k+1}^{(2)})}{|a_{k+1}^{(2)}|}\geq\frac{1}{p_k}-(2k-2)\cdot\frac{1}{|x|}\sum_{j=m}^k
\end{equation}
because $a_{k+1}^{(2)}$ is made by concatenating words in $\mathcal{W}_k$.  On the other hand, we claim that $\mathcal{N}(a_k,u)\leq(2k-1)\cdot(2|a_k|-1)^d$. To check this, note that if $u=b_{k+1}$, any occurrence of $a_k$ in $u$ must partially overlap elements of $\{w_{(i,k)}\colon1\leq i<2k-2\}\cup\{a_k\}$, in the postcard function definition of $b_{k+1}$, because it does not occur as a subword of $b_k^{(2)}$, and there are only $(2k-1)\cdot(2|a_k|-1)^d$ locations that have such overlaps.  If $u\neq b_{k+1}$ then $a_k$ does not occur in $u$, by Proposition~\ref{propd:level1} and the claim follows.

Now we return to showing that $u$ does not occur as a subword of $a_{k+1}^{(2)}$.  For contradiction, suppose $u$ does occur as a subword of $a_{k+1}^{(2)}$.  Recall, from the definition, that $a_{k+1}^{(2)}$ is primarily made of concatenated copies of $a_k$ and there are $2k-1$ specific locations where the words in the set $\{w_{(i,k)}\colon1\leq i<2k-2\}\cup\{b_k\}$ occur instead.  Since $u$ occurs as a subword of $a_{k+1}^{(2)}$ this means $u$ is primarily made of concatenated copies of $a_k$.  In fact, allowing for the possibility that $u$ occurs within $a_{k+1}^{(2)}$ in a location that overlaps the copies of $\{w_{(i,k)}\colon1\leq i<2k-2\}\cup\{b_k\}$ and that there may be no copies of $a_k$ that partially overlap these words, we can still use~\eqref{eqd:multiplicity} to estimate that
$$
\mathcal{N}(a_k,u)\geq((n_{k+1}-2)\prod_{i=2}^kn_i)^d\cdot\frac{1}{p_k}-(2k-1)\cdot(2n_k-1)^d.
$$
The reasoning for this is analogous to that of Level 3.

Lastly, suppose that $u=w_{(2k-3,k+1)}$.
By construction, $u$ is the self-concatenation of many copies of $a_k$.  If $u$ occurred within $v^{(2)}=a_{k+1}^{(2)}$ then it would either contain or partially overlap an occurrence of $w_{(1,k)}$ which is used in the postcard function definition of $a_{k+1}$.  Since $w_{(1,k)}$ does not occur as a subword of $a_k^{(2)}$, this occurrence of $u$ cannot contain $w_{(1,k)}$ as a subword so it can only occur in a location that partially overlaps $w_{(1,k)}$.  But $|u|=|v|$ and $u$ is the periodic concatenation of copies of $a_k$, so this occurrence of $u$ overlaps the copy of $w_{(1,k)}$ in all $2^d$ copies of $a_{k+1}$ that appear in the definition of $a_{k+1}^{(2)}$.  By periodicity of $u$, $w_{(1,k)}$ occurs as a subword of $a_k^{(2)}$, which is impossible.
This establishes the statement when $v=a_{k+1}$.  The argument for $v=b_{k+1}$ is similar, with the roles played by $a_k$ and $b_k$ switched.
\end{proof}

\subsubsection{Summarizing the construction and its properties}
The remainder of the construction is parallel to that given in the case when $d=1$. We have built a unique function ${\bf x}\colon \Z^d\to\{0,1\}$ whose restriction to the cube
$\{1-\prod_{i=1}^kn_i,\prod_{i=1}^kn_i\}^d$ is $a_k^{(2)}$,
and the $\mathbb{Z}^{d}$-subshift we define is the orbit closure of this function in $\{0,1\}^{\Z^d}$.  The proof that this system is a weakly CAM system is parallel to that given when $d=1$: any element of this system is either periodic (meaning has finite orbit under the action of $\Z^d$) or contains $a_k$ as a subword for all $k$, and so has dense orbit.  The fact that it is CAM again follows from Proposition~\ref{prop:weak-to-strong} because the system is expansive and weakly CAM.  The argument that this system has two non-atomic, ergodic measures is again parallel to that for $d=1$:
passing to a weak*-convergent subsequence of the empirical measures associated to the function ${\bf x}$,  and doing the same with the analogous function $\{1-\prod_{i=1}^kn_i,\prod_{i=1}^kn_i\}^d$ is $b_k^{(2)}$.  As in the case when $d=1$, almost every ergodic component of each of these measures are nonatomic and these measures are distinct because they give different measure to the cylinder set $\{x\in X\colon x(0,0,\dots,0)=0\}$.

\section{Further directions}
\label{sec:further}

We conclude with several questions about properties of CAM systems.
\begin{question}
    What are the automorphisms of a CAM $G$-system $(X, T)$, meaning what are all homeomorphsisms $S\colon X\to X$ such that $S\circ T_{g} = T_{g} \circ S$ for all $g \in G$?
\end{question}
As an example, for the $\Z$-system defined in~\ref{sec:def-of-Z-system},   we can easily check that the  automorphism group of this particular system contains the bit flip map (interchanging the letters $0$ and $1$).  However, we have little further insight into the automorphism group and its properties for this system or, more generally, whether there exist nontrivial constraints on the automorphism group of a CAM system.

\begin{question}
    What are the possible entropies for a CAM system? What are the possible growth rates of complexity for CAM $\mathbb{Z}^{k}$-subshifts?
\end{question}
In particular, this question is of interest for $\Z$-CAM systems and we do not know the entropy of the system defined in~\ref{sec:def-of-Z-system}.
This leads to a related question about chaotic systems:
\begin{question}
    Does every chaotic system with positive entropy have a proper infinite subsystem?
\end{question}
If there exists a  positive entropy CAM system, then the answer is no, in contrast to the behavior along subsequences as shown in~\cite{HLY}.  However, we do not know if any of our examples have positive entropy.

If a group $G$ admits a faithful action with dense periodic points, then we show in Proposition~\ref{prop:residually} that the group is necessarily residually finite. It seems plausible that this condition is in fact sufficient, and so we propose the following:
\begin{conjecture}
\label{conj:residual}    Every residually finite group is CAM.
\end{conjecture}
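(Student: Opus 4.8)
The natural plan is to transport the symbolic construction of Sections~\ref{sec:Zcamsystem} and~\ref{sec:higher} from $\mathbb{Z}^{d}$ to an arbitrary countably infinite residually finite group $G$ (finite groups are necessarily excluded, since a CAM system is infinite while a finite group has only finite orbits). Fix a descending chain $G = N_{0} \trianglerighteq N_{1} \trianglerighteq N_{2} \trianglerighteq \cdots$ of finite index normal subgroups with $\bigcap_{k} N_{k} = \{e\}$. The key preliminary observation is that each $N_{k}$ is infinite, so by separating a large finite subset of $N_{k}$ from the identity with a single finite index normal subgroup of $G$ (and throwing in one more subgroup at each stage to kill the $k$-th element of an enumeration of $G\setminus\{e\}$), the chain can be chosen so that the relative indices $[N_{k-1}:N_{k}]$ grow as fast as we like while still $\bigcap_k N_k = \{e\}$. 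These relative indices will play the role of the tuning parameters $n_{k}$ in the $\mathbb{Z}^{d}$ construction.

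First I would set up the symbolic framework inside $\{0,1\}^{G}$ with the shift $G$-action: let $D_{k} \subseteq G$ be a transversal for $N_{k}$, and note that $D_{k}$ is covered by $[N_{k-1}:N_{k}]$ translates of $D_{k-1}$ coming from the coset decomposition $G/N_{k} = \bigsqcup N_{k-1}/N_{k}$. A \emph{level-$k$ word} is a function on (a translate of) $D_{k}$; the $N_{k}$-periodisation of such a word plays the role of the periodic points, and the self-concatenation and postcard/stamp operations of Section~\ref{sec:higher} are imitated using this tiling. One then constructs by induction on $k$ periodic words $w_{(i,k)}$ and density words $a_{k},b_{k}$ on $D_{k}$, with $a_{k+1}$ agreeing with $a_{k}$ on a fixed translate of $D_{k}$, and proves the analogue of Propositions~\ref{prop:levelkplus1} and~\ref{propd:levelkplus1}: for distinct level-$k$ words $u\neq v$, $u$ does not occur in the ``doubling'' of $v$. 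The only genuine change in the combinatorics is that the bound $(2|u|-1)^{d}$ on the number of translates of a pattern that overlap a fixed copy is replaced by $|SS^{-1}|\le|u|^{2}$ (since, for a pattern supported on $S$, one has $gS\cap S\neq\emptyset$ iff $g\in SS^{-1}$); the density/frequency estimates then go through with the usual $\varepsilon_{k}$-type bounds \emph{provided} the relative indices were chosen large enough, which the preliminary observation guarantees.

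Given the family $\{a_{k}\}$, let $x\in\{0,1\}^{G}$ be the common extension of the $a_{k}$ and $X:=\overline{Gx}$. The rest follows the template of Proposition~\ref{prop:Z-CAM}: the $N_{k}$-periodisation of $a_{k}$ is a periodic point of $X$ converging to $x$, so periodic points are dense; the action is faithful because any $g\neq e$ lies outside some $N_{k}$ and, as in the $\mathbb{Z}^{d}$ case, one checks it moves a level-$k$ periodic point; and if $y\in X$ contains the doubling $a_{k}a_{k}$ for every $k$ then $Gy$ is dense, while if some doubling fails then the non-recurrence property forces $y$ to be the periodisation of a single level-$m$ word, hence $N_{m}$-periodic. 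Thus $(X,\sigma)$ is weakly CAM, and being an expansive subshift, Proposition~\ref{prop:weak-to-strong} upgrades this to CAM.

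The hard part, and the reason this remains only Conjecture~\ref{conj:residual}, is the combinatorial core. In $\mathbb{Z}^{d}$ the fundamental domains are cubes that tile in a completely rigid, commutative fashion, which is what makes the overlap counting and the ``$u$ does not occur in $vv$'' arguments manageable; for a general (possibly non-amenable, non-finitely-generated, or wildly branching) residually finite $G$ the transversals $D_{k}$ carry no such regularity, translates of a pattern can overlap in many more configurations, and it is not evident that the relative indices can always be arranged to beat these worse overlap bounds while keeping $\bigcap_k N_{k}=\{e\}$. Making the postcard construction and the frequency estimates robust to this lack of structure is precisely the obstacle we have not overcome.
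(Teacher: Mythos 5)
The statement you were given is Conjecture~\ref{conj:residual}: the paper does not prove it, and indeed only establishes the converse direction (Proposition~\ref{prop:residually}, that a CAM group must be residually finite), explicitly leaving the forward direction open. Your proposal does not close this gap either, and to your credit you say so in your final paragraph: what you have written is a plausible strategy for transporting the construction of Sections~\ref{sec:Zcamsystem} and~\ref{sec:higher} to a general countable residually finite group, not a proof.

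To name the missing pieces concretely: the entire weight of the $\mathbb{Z}$ and $\mathbb{Z}^{d}$ arguments rests on (a) the inductive no-occurrence lemmas (Propositions~\ref{prop:levelkplus1} and~\ref{propd:levelkplus1}) together with the frequency estimates~\eqref{eq:fraction5} and~\eqref{eqd:fraction5}, and (b) the parsing statement (Proposition~\ref{prop:structure}) that every point of the orbit closure decomposes into level-$k$ words along a translated tiling, which is what allows the proof of Proposition~\ref{prop:Z-CAM} to conclude that a point omitting $a_{k}a_{k}$ is forced to be periodic. In $\mathbb{Z}^{d}$ both rely on rigid structure that your setup does not reproduce: all level-$k$ words live on the same cube, translates of that cube tile $\mathbb{Z}^{d}$ commutatively, an occurrence of a level-$k$ word inside $v^{(2)}$ admits a controlled overlap count, and the minimal-period indices $p_{k}$ yield the crucial lower bounds on the density of occurrences of $a_{k}$ (\eqref{eqd:multiplicity}, \eqref{eq:estimate-k}) that are played off against the upper bounds via~\eqref{eq:inductive-bound}. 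For a transversal $D_{k}$ of a finite-index normal subgroup $N_{k}$ in a general $G$, even with a compatible nested choice of transversals, an arbitrary translate $gD_{k}$ need not align with the coset tiling, there is no analogue of the minimal-period lower bound, and replacing $(2|u|-1)^{d}$ by $|SS^{-1}|\le |u|^{2}$ only bounds how many translates can overlap, not the geometry of those overlaps, which is what the arguments actually use. Choosing the relative indices $[N_{k-1}:N_{k}]$ large does not by itself repair this, since the decisive inequalities compare densities of occurrences rather than sizes of fundamental domains; and without the parsing statement, the step ``some $a_{k}a_{k}$ is missing, hence $y$ is $N_{m}$-periodic'' has no justification. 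Until these are supplied, the argument proves nothing beyond what the paper already records as a conjecture.
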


In Proposition~\ref{prop:weak-to-strong}, we show that
an expansive and weakly CAM system is a CAM system.  However the converse we show is only that a CAM system is weakly CAM, and we ask if this can be improved:
\begin{question}
\label{question:expansive}
    Is every CAM system expansive?
\end{question}
Note that it is shown in~\cite{BBCDS, GW}  that any $\Z$-CAM system satisfies sensitivity to initial conditions. Still, we suspect that the answer to the above question is no, but do not have a counterexample.

\end{document}